\documentclass[11pt]{amsart}
\usepackage{amsmath,amsfonts,amssymb,amsthm,epsfig,enumitem}
\usepackage{color, bm}
\usepackage{comment}
\usepackage[colorlinks=true, pdfstartview=FitV, linkcolor=blue, citecolor=blue, urlcolor=blue, bookmarksdepth=2]{hyperref}
\usepackage[all]{hypcap}
\usepackage{tikz}

\usepackage[margin=1in]{geometry}

\numberwithin{equation}{section}
\newtheorem{Theorem}[equation]{Theorem}
\newtheorem{Proposition}[equation]{Proposition} 
\newtheorem{Lemma}[equation]{Lemma}

\newtheorem{Corollary}[equation]{Corollary}

\theoremstyle{definition}

\newtheorem{Question}[equation]{Question}

\newcommand{\arxiv}[1]{\href{http://arxiv.org/abs/#1}{\tt arXiv:\nolinkurl{#1}}}

\newcommand{\mF}{\mathfrak{F}}
\newcommand{\oOmega}{{\overline \Omega}}

\newcommand{\cT}{\mathcal{T}}
\newcommand{\bl}{{\bm \lambda}}
\newcommand{\bmu}{{\bm \mu}}

\newcommand{\wt}{\operatorname{wt}}

\newcommand{\bz}{\Bbb{Z}}
\newcommand{\ba}{\Bbb{A}}
\newcommand{\bc}{\Bbb{C}}
\newcommand{\br}{\Bbb{R}}
\newcommand{\bn}{\Bbb{N}}
\newcommand{\g}{\mathfrak{g}}

\newcommand{\asl}{\widehat{\mathfrak{{sl}}}}

\newcommand{\hook}{\operatorname{hook}}
\newcommand{\arm}{\operatorname{arm}}
\newcommand{\leg}{\operatorname{leg}}

\newcommand{\cP}{\mathcal{P}}

\newcommand{\Irr}{\operatorname{Irr}}

\newcommand{\Quot}{{\bf Quot}}

\newcommand{\bfv}{{\bf v}}
\newcommand{\bfw}{{\bf w}}

\newcommand{\im}{\operatorname{im}}
\newcommand{\ii}{{\bar \imath}}

\newcommand{\mL}{\mathfrak{L}}
\newcommand{\mM}{\mathfrak{M}}
\newcommand{\ol}{\overline}

\newcommand{\tr}{\text{tr}}

\newcommand{\mE}{\mathcal{E}}

\newcommand{\MM}{\mathcal{M}}
\newcommand{\bp}{\mathbb{P}}
\newcommand{\cO}{\mathcal{O}}
\newcommand{\eps}{\varepsilon}

\newcommand{\GL}{\mathbf{GL}}

\newcommand{\bx}{{\bm \xi}}

\newlength{\arrowsize}  
\pgfarrowsdeclare{biggertip}{biggertip}{  
  \setlength{\arrowsize}{0.4pt}  
  \addtolength{\arrowsize}{.5\pgflinewidth}  
  \pgfarrowsrightextend{0}  
  \pgfarrowsleftextend{-5\arrowsize}  
}{  
  \setlength{\arrowsize}{1pt}  
  \addtolength{\arrowsize}{.5\pgflinewidth}  
  \pgfpathmoveto{\pgfpoint{-5\arrowsize}{4\arrowsize}}  
  \pgfpathlineto{\pgfpointorigin}  
  \pgfpathlineto{\pgfpoint{-5\arrowsize}{-4\arrowsize}}  
  \pgfusepathqstroke  
}  

\theoremstyle{definition}
\newtheorem{example}[equation]{Example}

\newtheorem{remark}[equation]{Remark}
\newenvironment{Remark}[1][]{\begin{remark}[#1]\pushQED{\qed}}{\popQED \end{remark}}
\newtheorem{defn}[equation]{Definition}
\newenvironment{Definition}[1][]{\begin{defn}[#1]\pushQED{\qed}}{\popQED \end{defn}}

\begin{document}

\title[Crystals and torus actions]{Combinatorial realizations of
  crystals via torus actions on quiver varieties}

\subjclass[2010]{%
05E10, 
17B10
}

\author{Steven V Sam}
\address{Department of Mathematics, University of California, Berkeley, CA}
\email{svs@math.berkeley.edu}
\author{Peter Tingley}
\address{Department of Mathematics and Statistics, Loyola University, Chicago, IL}
\email{ptingley@luc.edu}

\begin{abstract}
Let $V(\lambda)$ be a highest weight representation of a symmetric Kac--Moody algebra, and let $B(\lambda)$ be its crystal.  There is a geometric realization of $B(\lambda)$ using Nakajima's quiver varieties.  In many particular cases one can also realize $B(\lambda)$ by elementary combinatorial methods. Here we study a general method of extracting combinatorial realizations from the geometric picture: we use Morse theory to index the irreducible components by connected components of the subvariety of fixed points for a certain torus action.  We then discuss the case of $\asl_n$, where the fixed point components are just points, and are naturally indexed by multi-partitions. There is some choice in our construction, leading to a family of combinatorial realizations for each highest weight crystal. In the case of $B(\Lambda_0)$ we recover a family of realizations which was recently constructed by Fayers.  This gives a more conceptual proof of Fayers' result as well as a generalization to higher level crystals. We also discuss a relationship with Nakajima's monomial crystal. 
\end{abstract}

\maketitle

\setcounter{tocdepth}{1} 

\tableofcontents 

\section{Introduction}
Kashiwara introduced a combinatorial object (a set along with certain operators) called a crystal associated to each irreducible highest weight representation of a symmetrizable Kac--Moody algebras $\g$, which encodes various information about the representation. 
Kashiwara's theory makes heavy use of the quantized universal enveloping algebra associated with $\g$, but the crystals themselves can often be realized by other means. For instance, in the case of the fundamental crystal $B(\Lambda_0)$ for the affine Kac--Moody algebras $\asl_n$, Misra and Miwa \cite{MM:1990} give a realization based on certain partitions.  Recently Fayers \cite{Fayers:2009}, building on work of Berg \cite{Berg:2010}, found an uncountable family of modifications to the Misra--Miwa realization, and hence many seemingly different realizations of the same crystal. 

Fayers' construction is purely combinatorial, and the motivation for the current work was to find a conceptual explanation of the existence of this family. We achieve this using Nakajima's quiver varieties. Our construction also allows us to generalize Fayers' results to give families of realizations of $B(\Lambda)$ in terms of multi-partitions for any integrable highest weight $\asl_n$-module $V(\Lambda)$. Most of our construction is actually carried out in the generality of symmetric Kac--Moody algebras, although the end result is less combinatorial in other cases. 

For the moment fix a symmetric Kac--Moody algebra $\g$ and a highest weight representation $V(\lambda)$.
Nakajima's quiver varieties give a geometric way to understand the corresponding crystal $B(\lambda)$. 
One defines a variety $\mathfrak{L}(\bfv,W)$ for each choice of a (graded) vector space $W$ and a dimension vector $\bfv$. Choosing a specific $W$ whose dimension is determined by $\lambda$, the vertices of $B(\lambda)$ are indexed by
\[
\coprod_{\bfv} \Irr \mathfrak{L}(\bfv, W),
\]
the union of the set of irreducible components of $\mathfrak{L}(\bfv, W)$ as $\bfv$ varies. 

There is a natural action of a torus $T$ on each $\mathfrak{L}(\bfv, W)$. 
We denote the subvariety of torus fixed points by $\mathfrak{F}(\bfv, W)$. In fact, the torus action extends to a larger smooth variety $\mathfrak{M}(\bfv, W)$, and there are no new fixed points in $\mathfrak{M}(\bfv, W)$, which implies that $\mathfrak{F}(\bfv, W)$ is a smooth subvariety of $\mathfrak{L}(\bfv, W)$. For a generic 1-parameter subgroup $\iota \colon \bc^* \rightarrow T$ (when $\g$ is of infinite type, $\iota$ has to be interpreted as a certain limit of 1-parameter subgroups), there is a map
\begin{equation*}
\begin{aligned}
M_\iota \colon  \coprod_\bfv \Irr \mathfrak{L}(\bfv, W) & \rightarrow \coprod_\bfv  \Irr \mathfrak{F}(\bfv, W) \\
Z & \mapsto \text{Component containing } \lim_{t \rightarrow \infty} \iota(t) \cdot x \text{ for $x \in Z$ generic.}
\end{aligned}
\end{equation*}
For many $\iota$ this map is 1-1. In these cases one can transport the crystal structure on $\coprod_\bfv \Irr \mathfrak{L}(\bfv, W)$ to a crystal structure on its image in $\coprod_\bfv \Irr \mathfrak{F}(\bfv, W)$. 
This latter set can sometimes be described combinatorially, which is how we get combinatorics out of the geometry. In fact, this  idea has previously been used by Nakajima \cite{Nakajima:2001}, although here we use a larger torus $T$, so we are able to see some combinatorics which was not visible in that work.

In the current paper we mainly consider the case of $\asl_n$. Here the construction is particularly nice because each fixed point variety $\mathfrak{F}(\bfv, W) $ is a finite collection of points. Even better, these points are naturally indexed by tuples of partitions. By taking various choices of $\iota$, we get a large family of realizations for each highest weight crystal $B(\Lambda)$ where the vertices are certain multi-partitions. For many of these choices we give a simple characterization of the image of $M_\iota$ and a combinatorial description of the corresponding crystal operators on multi-partitions (see Theorems~\ref{thm:to-partitions} and~\ref{thm:irrationalcrystal}). This gives a large family of combinatorial realizations of each highest weight crystal $B(\Lambda)$ for $\asl_n$. 
In the case $\Lambda=\Lambda_0$ we obtain exactly the realizations found by Fayers.

In \S\ref{sec:monomial}, we give an application to the monomial crystal of Nakajima \cite[\S3]{Nakajima:2003}, as generalized by Kashiwara \cite[\S4]{Kashiwara:2001}. Specifically, we show that, for each of the realizations of $B(\Lambda)$ discussed above,   there is a map to a particular instance of the monomial crystal. One consequence of this is that certain instances of the monomial which have not previously been studied do in fact realize the crystals $B(\Lambda)$.

Before beginning, we would like to mention related work of Savage \cite{Savage:2006} and Frenkel--Savage \cite{frenkelsavage} discussing how to extract various combinatorial realizations of crystals from Nakajima's quiver varieties. One special case is $B(\Lambda_0)$ for $\asl_n$, where Savage demonstrates the appearance of the Misra--Miwa realization. Savage parameterizes the irreducible components of the varieties as conormal bundles of various orbits of representations of an undoubled cyclic quiver. By instead viewing the irreducible components as parameterized by torus fixed points, we gain the freedom to flow towards those fixed points in many different ways, thereby seeing a whole family of realizations where Savage only saw one. Even earlier work which uses ideas similar to the ones in this paper can be found in \cite{Nakajima:1994b}. We also point to \cite{fujiiminabe} for a survey of topics including affine type A quiver varieties, quot schemes, and torus actions, which discusses many of the tools used in this paper.

\subsection*{Acknowledgments}
We thank Pavel Etingof, Matthew Fayers, Monica Vazirani and Ben Webster for interesting discussions. We also thank Hiraku Nakajima, Alistair Savage, Ben Webster, and two anonymous referees for helpful comments on earlier versions of this paper. The first author was supported by an NDSEG fellowship and a Miller research fellowship. The second author was supported by NSF grants DMS-0902649, DMS-1162385 and DMS-1265555.

\section{Notation}

We give a table of important notation and where it is first used. We only include notation that is used in multiple sections. 

~

\begin{tabular}{l|l|l}
  Notation & Description & First used\\ \hline
  $\mM(\bfv,W), \mL(\bfv,W)$ & Quiver varieties  &
  \S\ref{ssec:Nak-varieties}\\
    $\lambda$; $\omega_i$ & Highest weights; fundamental weights in general & \S\ref{ss:crystals} \\
  $B(\lambda), B(\infty)$ & Crystals & \S\ref{ss:crystals} \\
   $e_i$, $f_i$ & Crystal operators & \S\ref{ss:crystals} and \S\ref{ss:quiver-crystal} \\ 
$\Gamma, I, A, Q, H$ & Quiver notation & \S\ref{ssec:Nak-varieties}\\
   $\Irr X$ & The irreducible components of $X$ & \S\ref{ss:quiver-crystal} \\
 $x_\ii$, $\ol{x}_\ii$, $s$, $t$ & Operators on quiver representations &
  \S\ref{ss:quiver-crystal} \\
  $\cT$, $T_\Omega$, $T_W$, $T_s$ & Tori in general framework & \S\ref{ssec:torus} \\
  $\mF(\bfv,W)$ & $\cT$-fixed points of $\mL(\bfv,W)$ &
  \S\ref{sec:framework} \\
  $F_\iota(\bfv, W)$ & $\bc^*$-fixed points for a map $\iota \colon \bc^* \rightarrow \cT$ & \S\ref{sec:framework} \\
  $M_\iota$, $M_\bx$ & Maps from $\Irr \mL(\bfv,W)$ to $\Irr \mF(\bfv,W)$ & \S\ref{sec:framework} and \S\ref{sec:imageMiota} \\
  $\bl$, $p$ & Multi-partition and coloring function & \S\ref{partitions-section} \\
  $\bar{c}(b)$, $c(\bl)$ &The color of a box $b \in \bl$; the content of $\bl$ & \S\ref{partitions-section} \\
  $\arm$, $\leg$ & Arm and leg statistics & \S\ref{partitions-section} \\
$A_\ii, R_\ii$ & Addable and removable $\ii$-nodes & Definition~\ref{AR-def}\\
  $\bx$, $\xi_\Omega$, $\xi_{\ol{\Omega}}$, $\xi_i$ & Slope
  datum & Definition~\ref{defn:slopedatum} \\
    $h^\bx$ & The height function associated to $\bx$ & Definition~\ref{defn:height} \\
  integral, general, aligned  &  Conditions on slope datum &
  Definition~\ref{defn:height} \\
  $\bx$-regular, $\bx$-illegal & Conditions on multi-partitions & Definition~\ref{def:multiillegal} \\
$T$, $(t_\Omega, t_{\ol{\Omega}}, t_1, \dots, t_\ell)$ & $\asl_n$ specific torus and its coordinates & \S\ref{ss:torus-combinatorics} \\
  $\Lambda$; $\Lambda_\ii$ & Highest weights; fundamental weights for $\asl_n$ & \S\ref{ss:torus-combinatorics} \\
$p_\bl$ & $T$ fixed point in $\mL(\bfv, W)$ corresponding to $\bl$ & Figure~\ref{fig:fixed} \\
$\mE_\bl$ & Affine space locally containing $\mM(\bfv, W)$ near $p_\bl$ & \S\ref{ss:coordinates} \\
$T_\bx$ & 1-parameter subgroup attached to slope datum & \S\ref{sec:imageMiota}\\
$e_{\ii}^\bx, f_\ii^\bx$ & Crystal operators attached to slope datum & \S\ref{ss:comb-cryst}
\end{tabular}

\section{Background}

\subsection{Crystals} \label{ss:crystals}
We refer the reader to \cite{Kashiwara:1995} or \cite{Hong&Kang:2000} for more on this rich subject. Here we simply fix notation and state the properties of crystals we need. We only consider certain explicit realizations of crystals so do not need to discuss the general theory. 

A crystal for a symmetric Kac--Moody algebra $\mathfrak{g}$ consists of a set $B$ along with operators $e_{i}, f_{i} \colon B \rightarrow B \cup \{ 0 \}$ for each $i \in I$, which satisfy various axioms. There is a crystal $B(\lambda)$ associated to each integral highest weight $\lambda$, which records certain combinatorial information about the irreducible highest weight representation $V(\lambda)$. Often the definition of a crystal includes three functions $\wt, \varphi, \varepsilon \colon B \rightarrow P$, where $P$ is the weight lattice. In the case of crystals of $B(\lambda)$, these functions can be recovered (up to shifts in null directions) from knowledge of the $e_{i}$ and $f_{i}$, so we will not count them as part of the data. 

When $\lambda-\mu$ is a dominant integral weight, there is a unique embedding $B(\mu) \hookrightarrow B(\lambda)$ that commutes with all the operators $e_i$ (although not with the operators $f_i$). In this way $\{B(\lambda)\}$ becomes a directed system. This system has a limit called the infinity crystal, which we denote by $B(\infty)$.

\subsection{Quiver varieties} \label{ssec:Nak-varieties}

Here we review the quiver varieties originally constructed by Lusztig \cite{Lusztig:1991} and Nakajima \cite{Nakajima:1994, Nakajima:1998}. We work only with quiver varieties over the field $\bc$ of complex numbers.

Fix a graph $\Gamma=(I, H)$, and let $Q = (I, A)$ be its doubled quiver (i.e., the directed graph with two arrows for each edge in $\Gamma$, one in each direction). Let $t(a)$ and $h(a)$ denote the tail and head of the arrow $a$. Choose an orientation of $Q$, which is a subset $\Omega \subset A$ that contains exactly one arrow for each edge in $\Gamma$. Define 
\begin{equation*}
\epsilon(a) = \begin{cases}
1 & \text{if } a \in \Omega \\
-1 & \text{otherwise.}
\end{cases}
\end{equation*}
Define an involution $a \mapsto \ol{a}$ on $A$ by setting $\overline{a}$ to be the arrow associated to the same edge as $a$, but in the reverse direction.

\begin{figure}
\setlength{\unitlength}{0.5cm}
\vspace{1cm}
\hspace{1cm}

\begin{picture}(5,-5)(-5,0)

\put(-15.15,-0.8){\tiny $1$}
\put(-13.15,-0.8){\tiny $2$}

\put(-7.5,-0.8){\tiny $n-2$}
\put(-5.5,-0.8){\tiny $n-1$}

\put(-10.15,2){\tiny $0$}

\put(-15,0){\circle*{0.4}}
\put(-13,0){\circle*{0.4}}

\put(-7,0){\circle*{0.4}}
\put(-5,0){\circle*{0.4}}

\put(-10,1.2){\circle*{0.4}}

\put(-10,1.2){\line(4,-1){5}}
\put(-10,1.2){\line(-4,-1){5}}

\put(-10.4,0){$\ldots$}

\put(-14.8,0){\line(1, 0){1.6}}
\put(-12.8,0){\line(1, 0){1.6}}
\put(-8.8,0){\line(1, 0){1.6}}
\put(-6.8,0){\line(1, 0){1.6}}

{\color{red}
\put(1.3,0.2){\vector(1, 0){1.6}}
\put(3.3,0.2){\vector(1, 0){1.6}}
\put(7.3,0.2){\vector(1, 0){1.6}}
\put(9.3,0.2){\vector(1, 0){1.6}}
}

{\color{black}
\put(10.7,-0.2){\vector(-1, 0){1.6}}
\put(8.7,-0.2){\vector(-1, 0){1.6}}
\put(4.7,-0.2){\vector(-1, 0){1.6}}
\put(2.7,-0.2){\vector(-1, 0){1.6}}
}

{\color{black}
\put(1, 0.25){\vector(4,1){4.6}} }
{\color{red}
\put(5.4, 1.15){\vector(-4,-1){4.6}}
}

{\color{black}
\put(5.4, 1.35){\vector(4,-1){4.2}} }
{\color{red}
\put(9.6, 0.05){\vector(-4,1){4.2}}
}

\put(0,0){\circle*{0.4}}
\put(2,0){\circle*{0.4}}

\put(8,0){\circle*{0.4}}
\put(10,0){\circle*{0.4}}

\put(5,1.3){\circle*{0.4}}

\put(4.3, -0.1){$\cdots$}

\put(-0.15,-0.8){\tiny $1$}
\put(1.85,-0.8){\tiny $2$}

\put(7.5,-0.8){\tiny $n-2$}
\put(9.5,-0.8){\tiny $n-1$}

\put(4.85,2){\tiny $0$}
\end{picture}

\caption{\label{pic:QV} The $\asl_n$ Dynkin diagram and the associated doubled quiver $Q$. In $Q$, the negatively oriented arrows (the ones of the form $i\to i+1$) are colored red. These are the arrows for which $\epsilon(a)=-1$. }
\end{figure}
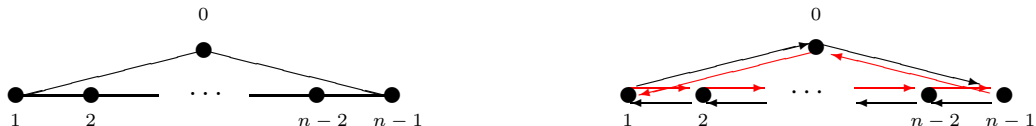

Let $V$ be an $I$-graded vector space of dimension $\bfv$. Let $E(V)$ be the space of all representations of the path algebra of $Q$ on $V$ (that is, an element $x \in E(V)$ consists of a choice of linear map $x_a \colon V_{t(a)} \rightarrow V_{h(a)}$ for each arrow $a$). 
$E(V)$ is a symplectic vector space, where the form $\langle \cdot, \cdot \rangle$ is defined by
\begin{equation*}
\langle x, x'\rangle = \sum_{a \in A} \epsilon(a) \tr( x'_{\bar a} x_a ).
\end{equation*}
Here $\tr$ means trace. There is a natural action of $\GL(V) = \prod_i \text{GL} (V_i)$ on $E(V)$, and the moment map $\mu \colon E(V) \to \mathfrak{gl}(V) = \prod_i \mathfrak{gl}(V_i)$ for this action is given by 
\begin{align*}
\mu(x) = \bigoplus_{i \in I} \left[ \sum_{a: t(a)=i} \epsilon(a) x_{\bar a} x_a  \right].
\end{align*}
Here we are identifying $\mathfrak{gl}(V)$ with its dual space by the trace form.

A point $x \in E(V)$ is called {\bf nilpotent} if, for some $N$ and all paths $a_N \cdots a_2 a_1$ in $Q$ of length $N$, $x_{a_N} \circ \cdots \circ x_{a_2} \circ x_{a_1} =0$ as a map from $V_{t(a_1)}$ to $V_{h(a_N)}$. 
The {\bf Lusztig quiver variety} $\Lambda(V)$ is the subvariety of $E(V)$ consisting of nilpotent points which also satisfy the preprojective relations $\mu(x)= (0)$.
As discussed in \cite{Lusztig:1991}, $\Lambda(V)$ is a Lagrangian subvariety of $E(V)$. 

Now choose another $I$-graded vector space $W = \bigoplus_i W_i$. Let $E(V,W)$ be the space of all triples $(x,s,t)$ where $x \in E(V)$, and $s \colon V \rightarrow W$ and $t \colon W \rightarrow V$ are maps of $I$-graded vector spaces. 
$E(V,W)$ has a symplectic form $\langle \cdot, \cdot \rangle$ defined by
\begin{equation*}
\langle (x,s,t), (x',s',t') \rangle = \tr (s' t) - \tr (t' s) + \sum_{a \in A} \epsilon(a) \tr( x'_{\bar a} x_a ).
\end{equation*}
There is a natural action of $\GL(V)$ on $E(V,W)$, and the moment map $\mu \colon E(V,W) \to \mathfrak{gl}(V)$ for this action is given by 
\begin{align*}
\mu(x,s,t) = \bigoplus_{i \in I} \left[ \sum_{a: t(a)=i} \epsilon(a) x_{\bar a} x_a + t_i s_i  \right].
\end{align*}

\begin{Definition}
We call $(x,s,t) \in E(V,W)$ {\bf stable} if $\im(t)$ generates $V$ under the action of $x$. Denote the subset of $E(V,W)$ consisting of stable representations by $E(V,W)^{\rm st}$.
\end{Definition}

The stability condition ensures that the $\GL(V)$ action on $E(V,W)^{\rm st}$ is free, so we can define the following varieties
\begin{equation} \label{eq:qv}
  \begin{split}
    \mM(\bfv,W) &:= \{ (x,s,t) \in E(V,W)^{\rm st} \mid \mu(x,s,t)=0
    \} / \GL(V), \\
    \mL(\bfv,W) &:= \{ [x,s,t] \in \mathfrak{M}(\bfv,W) \mid s=0, \; x
    \text{ is nilpotent} \}.
\end{split}
\end{equation}
Here $\bfv = \dim V$ (as an $I$-graded vector space). Furthermore, it follows from the theory of Marsden--Weinstein reduction \cite{MW} that $\mM(\bfv,W)$ is smooth, and the symplectic form on $E(V,W)$ descends to a symplectic form on $\mathfrak{M}(\bfv,W)$. Alternatively, these constructions can be done in the language of geometric invariant theory, from which one can deduce that $\mL(\bfv, W)$ is a projective algebraic variety (see \cite[\S 3.iii]{Nakajima:1998}).

\begin{Theorem}[{\cite[Theorem 5.8]{Nakajima:1994}}] \label{L:th}
Assume that $Q$ does not have any loops (i.e., edges starting and ending at the same vertex). Then $\mathfrak{L}(\bfv,W)$ is a Lagrangian subvariety of $\mathfrak{M}(\bfv,W)$. In particular, it is equidimensional of dimension $\frac{1}{2}\dim \mM(\bfv,W)$. \qed
\end{Theorem}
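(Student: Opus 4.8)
The statement that $\mL(\bfv,W)$ is Lagrangian in $\mM(\bfv,W)$ has two parts: that $\mL(\bfv,W)$ is \emph{isotropic}, i.e.\ the symplectic form of $\mM(\bfv,W)$ restricts to zero on its tangent spaces, and that each irreducible component has dimension exactly $\tfrac12\dim\mM(\bfv,W)$; equidimensionality is then part of the same conclusion. Because $\mM(\bfv,W)$ is smooth with symplectic form obtained by Marsden--Weinstein reduction of $\langle\cdot,\cdot\rangle$ on $E(V,W)$, I would compute on the standard presentation $T_{[x,s,t]}\mM(\bfv,W)=\ker d\mu/\im d\sigma$, where $d\sigma\colon\mathfrak{gl}(V)\to E(V,W)$ is the infinitesimal $\GL(V)$-action; the reduced form is evaluated by pairing lifts lying in $\ker d\mu$. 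The key structural point is that on $\mL(\bfv,W)$ the framing $s$ is identically zero, so the moment map equation reduces to the preprojective relation defining $\Lambda(V)$ in terms of $x$ alone, whence the $x$-part of any point of $\mL(\bfv,W)$ lies in the Lusztig quiver variety $\Lambda(V)$.

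Granting this, isotropy follows at once from Lusztig's theorem that $\Lambda(V)$ is Lagrangian in $E(V)$. A tangent vector to $\mL(\bfv,W)$ lifts to $(\delta x,\delta s,\delta t)\in\ker d\mu$ with $\delta s=0$ (to stay in $\{s=0\}$) and $\delta x$ tangent to the nilpotent locus; since $s=\delta s=0$, the condition $d\mu=0$ reduces to the linearized preprojective relation, so $\delta x\in T_x\Lambda(V)$. For two such lifts the framing contributions $\tr(\delta s'\,\delta t)-\tr(\delta t'\,\delta s)$ vanish, so
\[
\langle(\delta x,0,\delta t),(\delta x',0,\delta t')\rangle=\sum_{a\in A}\epsilon(a)\tr(\delta x'_{\bar a}\delta x_a),
\]
which is precisely the Lusztig symplectic pairing of $\delta x$ and $\delta x'$; it is zero because $T_x\Lambda(V)$ is isotropic. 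As this is the descended form, $\mL(\bfv,W)$ is isotropic, and in particular every component has dimension at most $\tfrac12\dim\mM(\bfv,W)$.

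The harder half is the matching lower bound together with equidimensionality, and this is where I expect the real work to lie. Quantitatively, over the stable locus the map $[x,0,t]\mapsto x$ presents $\mL(\bfv,W)$ as the free $\GL(V)$-quotient of an open subset of a rank-$(\bfv\cdot\bfw)$ vector bundle (the choices of $t$) over $\Lambda(V)$; since $\Lambda(V)$ is equidimensional of dimension $\tfrac12\dim E(V)$, the bookkeeping $\tfrac12\dim E(V)+\bfv\cdot\bfw-\dim\GL(V)=\tfrac12\dim\mM(\bfv,W)$ yields the correct value on any component meeting the stable locus in full dimension. The obstacle is that stability can fail over part of $\Lambda(V)$, so one must rule out a dimension drop and show the stable locus is suitably dense in the relevant components. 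To supply the missing geometric input I would invoke the conical $\bc^*$-action $(x,s,t)\mapsto(\lambda x,\lambda s,\lambda t)$: it scales $\langle\cdot,\cdot\rangle$ with positive weight, descends to $\mM(\bfv,W)$, and contracts the affinization $\mM_0(\bfv,W)=\mu^{-1}(0)/\!/\GL(V)$ to its cone point. Properness of $\mM(\bfv,W)\to\mM_0(\bfv,W)$ then identifies $\mL(\bfv,W)$ with the central fibre and makes it a deformation retract of $\mM(\bfv,W)$, forcing the top-dimensional components to fill out dimension $\tfrac12\dim\mM(\bfv,W)$ and giving equidimensionality.
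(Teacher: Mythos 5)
The paper does not actually prove this statement: it is quoted from \cite{Nakajima:1994} with a \qed, so there is no internal proof to compare against. Your argument is nevertheless essentially the standard one (and close to Nakajima's own): isotropy via the vanishing of the framing terms $\tr(\delta s'\,\delta t)-\tr(\delta t'\,\delta s)$ when $\delta s=\delta s'=0$, which reduces the descended form to Lusztig's pairing on $T_x\Lambda(V)$, together with the presentation of $\mL(\bfv,W)$ as the free $\GL(V)$-quotient of the stable locus in $\Lambda(V)\times\Hom(W,V)$. Three remarks. First, the ``obstacle'' you flag in the last paragraph is not actually there: stability is an open condition on $\Lambda(V)\times\Hom(W,V)$, so its intersection with any irreducible component $Z\times\Hom(W,V)$ is either empty or dense of full dimension $\tfrac12\dim E(V)+\bfv\cdot\bfw$; an open subset of an irreducible variety cannot drop dimension, so every component of $\mL(\bfv,W)$ has dimension exactly $\tfrac12\dim E(V)+\bfv\cdot\bfw-\dim\GL(V)=\tfrac12\dim\mM(\bfv,W)$, and equidimensionality follows at once. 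Your first argument is therefore already complete; note that this is precisely where the no-loops hypothesis enters, since it is needed for Lusztig's theorem that $\Lambda(V)$ is Lagrangian (in particular equidimensional of dimension $\tfrac12\dim E(V)$). Second, the backup argument you offer --- deformation retraction of $\mM(\bfv,W)$ onto the central fibre of $\mM(\bfv,W)\to\mM_0(\bfv,W)$ via the conical $\bc^*$-action --- is both unnecessary and insufficient as stated: a deformation retract need not be equidimensional, and identifying $\mL(\bfv,W)$ with $\pi^{-1}(0)$ requires its own argument. Third, a small technical point on isotropy: at singular points of $\Lambda(V)$ the Zariski tangent space need not be isotropic even for a Lagrangian subvariety, so the pairing computation should be carried out at smooth points of each component, which is all that is required.
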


\begin{Remark}
The varieties $\mM(\bfv, W)$ and $\mL(\bfv, W)$ constructed using different spaces $V$ with the same graded dimension $\bfv$ are canonically isomorphic, which is why we only record the dimension $\bfv$. Up to isomorphism, the space also only depends on the graded dimension of $W$, but that isomorphism is not canonical, and it is useful to keep track of a fixed vector space $W$. On occasion we will need to refer to a choice of vector space $V$ associate to a point in $\mM(\bfv, W)$, in which case we will refer to the point by $[V,x,s,t]$ instead of just $[x,s,t]$. 
\end{Remark}

\subsection{Crystal structure on quiver varieties} \label{ss:quiver-crystal}

The following construction is due to Kashiwara and Saito \cite{kashiwarasaito, Saito:2002}, although we have rephrased things slightly. 

Given $x \in \Lambda(V)$ and $i \in I$, define 
\[
x_i:=  \sum_{a \colon i \to j} x_a \colon V_i \to \bigoplus_{a: i \to j} V_j \quad \text{ and } \quad {}_ix :=\sum_{a \colon j\to i} \epsilon(a) x_a \colon \bigoplus_{a: j\to i} V_j \rightarrow V_i. 
\]
The condition $\mu(x)=0$ in the definition of $\Lambda(V)$ is equivalent to, for all $i$, ${}_i x \circ x_i =0$.

Each irreducible component of $\Lambda(V)$ is $\GL(V)$-invariant, so we can safely denote the set of irreducible components by $\Irr \Lambda(\bfv)$, only recording the dimension vector ${\bfv}$ of $V$. 
Given $Z \in \Irr \Lambda(\bfv)$ and $i \in I$, the quantities $\dim \im(x_i)$ and $\dim \im({}_i x)$ are semi-continuous functions. Hence we can define 
\[
Z_i^0 = \{ x \in Z \mid \dim \im(x_i) \text{ is maximal and} \dim \im({}_i x) \text{ is maximal}\},
\]
which is an open dense subset of $Z$. Define
\begin{align*}
e_i(Z) &= \ol{\{ x \in \Lambda(\bfv - {\bf 1}_i) \mid x \text{ is isomorphic to a quotient of some } x' \in Z_i^0\}}\\
f_i(Z) &= \ol{\{ x \in \Lambda(\bfv + {\bf 1}_i) \mid x \text{ has a quotient isomorphic to some } x' \in Z_i^0\}}.
\end{align*}
where ${\bf 1}_i$ is the vector with coordinate $1$ in position $i$ and $0$ elsewhere. Then $e_i(Z) \in \Irr \Lambda(\bfv - {\bf 1}_i) \cup \{\varnothing\}$ and $f_i(Z) \in \Irr \Lambda(\bfv + {\bf 1}_i)$, and by \cite[Theorem 5.3.2]{kashiwarasaito}, $\coprod_{\bfv} \Irr \Lambda(\bfv)$ along with the operators $e_i, f_i$ is a realization of the infinity crystal $B(\infty)$.

To realize the highest weight crystals $B(\lambda)$ we must move to Nakajima's quiver varieties. Fix a dominant integral weight $\lambda = \sum_i w_i \omega_i$ expressed as a linear combination of fundamental weights. Fix an $I$-graded vector space $W$ of dimension $\bfw=(w_i)_{i \in I}$. Given an irreducible component $Z \in \Irr \Lambda(\bfv)$, define
\[
Z^W = \{ [x,0,t] \in \mL(\bfv,W) \mid x \in Z,\ (x,0,t) \in E(V,W)^{\rm st} \}.
\]
Then $Z^W$ is either empty or an irreducible component of $\mL(\bfv, W)$. Define the map
\begin{align*}
G \colon  \coprod_{\bfv} \Irr \Lambda(\bfv) & \rightarrow \coprod_{\bfv} \Irr \mL(\bfv, W) \cup \{ \emptyset \} \\
Z & \mapsto Z^W
\end{align*}
and let $S \subset \coprod_{\bfv} \Irr \Lambda(\bfv)$ be the set of those $Z$ such that $Z^W \neq \emptyset$. Then $G$ is a bijection between $S$ and $\coprod_{\bfv} \Irr \mL(\bfv, W)$, and we denote the inverse bijection by $G^{-1}$. By \cite[\S 4.6]{Saito:2002}, $\coprod_{\bfv} \Irr \mL(\bfv, W)$ along with the crystal operators $G e_i G^{-1}$ and $G f_i G^{-1}$ is a realization of $B(\lambda)$. 

\subsection{Torus actions} \label{ssec:torus} 
One of the main tools in the current paper is a large torus acting on each variety $\mL(\bfv, W)$. This torus is the product of three smaller tori, which we now define. 

Let $T_\Omega \simeq (\bc^*)^{\Omega}$. This acts on $\mathfrak{M}(\bfv,W)$, where ${\bf z} = (z_a)_{a \in \Omega}$ acts by
\begin{equation*}
({\bf z} \cdot x)_a = \begin{cases}
z_a x_a  & \text{if } a \in \Omega \\
z_{\bar a}^{-1} x_{a} & \text{otherwise},
\end{cases}
\end{equation*}
and fixes $s$ and $t$.

Fix a maximal torus $T_W \cong (\bc^*)^{\dim W}$ in $\GL(W)$ compatible with the $I$-grading. Then $T_W$ acts on $\mathfrak{M}(\bfv,W)$ by
\begin{equation*}
M \cdot [x,s,t] = [x, Ms, t M^{-1}]. 
\end{equation*}

Finally, consider the one-dimensional torus $T_s \cong \bc^*$ which acts on $\mM(\bfv, W)$ by 
\begin{equation*}
(w \cdot x)_a = 
\begin{cases}
w x_a & \text{if } a \in \Omega \\
x_a & \text{otherwise}, 
\end{cases}
\quad
w \cdot s = w s, \quad w \cdot t = t.
\end{equation*}

It is clear that all these actions preserve $\mL(\bfv, W) \subset \mM(\bfv, W)$. 
Let $\cT = T_\Omega \times T_W \times T_s$.

\begin{Remark} \label{rem:W-form} 
The torus $T_I := (\bc^*)^I$ naturally embeds in $\cT$, and the induced action of $T_I$ on $\mathfrak{M}(\bfv,W)$ is trivial. Thus we actually have an action of the quotient $\cT/T_I$. If $\Gamma$ is a tree, one can see that $\cT/T_I  \cong (T_W \times T_s)/ (T_I \cap (T_W \times T_s))$, so the orbit of any point under $\cT$ is the same as the orbit under $T_W \times T_s$. Thus $T_\Omega$ only contributes non-trivially when $\Gamma$ has at least one cycle. 
\end{Remark}

\section{A framework for extracting combinatorics} \label{sec:framework}

Let $\cT = T_\Omega \times T_W \times T_s$, which acts on $\mathfrak{M}(\bfv,W)$ and $\mathfrak{L}(\bfv,W)$ as in \S\ref{ssec:torus}.  Consider a 1-parameter subgroup $\iota \colon \bc^* \hookrightarrow \cT$, and denote the induced $\bc^*$ action on $\mathfrak{M}(\bfv,W)$ by $T_\iota$. Consider $\pi_s \circ \iota \colon \bc^* \rightarrow \bc^*$, where $\pi_s$ is projection onto $T_s$. Define $\wt(T_\iota)$ to be the weight of $\pi_s \circ \iota$. The following is clear from the definitions:

\begin{Lemma}
$T_\iota$ acts with weight $\wt(T_\iota)$ on the symplectic form from \S\ref{ssec:Nak-varieties}.
\qed
\end{Lemma}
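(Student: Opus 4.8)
The plan is to use the fact that $\cT = T_\Omega \times T_W \times T_s$ is a direct product and that the action of a torus on a bilinear form is multiplicative over commuting factors: if each factor scales $\langle \cdot, \cdot \rangle$ by a character $\chi_\bullet$, then a $1$-parameter subgroup $\iota = (\iota_\Omega, \iota_W, \iota_s)$ scales the form by the product character, whose weight on $\iota$ is the sum of the weights of the $\chi_\bullet \circ \iota$. Thus it suffices to compute the weight of each of the three tori $T_\Omega, T_W, T_s$ separately and check that only $T_s$ contributes. Everything reduces to plugging the explicit actions of \S\ref{ssec:torus} into the three terms of $\langle (x,s,t),(x',s',t')\rangle = \tr(s't) - \tr(t's) + \sum_{a\in A}\epsilon(a)\tr(x'_{\bar a}x_a)$.

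First I would treat $T_\Omega$ and $T_W$, showing both act with weight $0$. For ${\bf z}\in T_\Omega$ the maps $s,t$ are fixed, so $\tr(s't)$ and $\tr(t's)$ are unchanged; in the $x$-sum I would pair each $a\in\Omega$ with $\bar a\notin\Omega$, observing that $x_a$ scales by $z_a$ while $x'_{\bar a}$ scales by $z_{\overline{\bar a}}^{-1}=z_a^{-1}$, so each product $x'_{\bar a}x_a$ is invariant. For $M\in T_W$ the $x$-part is untouched, and cyclicity of the trace gives $\tr(s't)\mapsto \tr(Ms'\,tM^{-1})=\tr(s't)$ and $\tr(t's)\mapsto \tr(t'M^{-1}\,Ms)=\tr(t's)$. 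Hence both tori fix the form.

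Next I would compute $T_s$ and find weight $1$. Under $w\in T_s$ we have $s\mapsto ws$, $t\mapsto t$, so $\tr(s't)$ and $\tr(t's)$ each pick up exactly one factor of $w$; and in the $x$-sum, since $T_s$ scales only the arrows in $\Omega$, for each pair $\{a,\bar a\}$ precisely one of the two maps in the product $x'_{\bar a}x_a$ carries a factor of $w$ (whichever is the arrow in $\Omega$), so every term scales by $w$. Thus the whole form scales by $w$, i.e. $T_s$ acts with weight $1$. Combining, a general $\iota$ scales $\langle\cdot,\cdot\rangle$ by the character that is trivial on $T_\Omega$ and $T_W$ and is the identity on $T_s$, so its weight is exactly the weight of $\pi_s\circ\iota$, namely $\wt(T_\iota)$.

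The argument is entirely routine, which is consistent with the paper asserting it ``clear from the definitions''; there is no real obstacle. The only point requiring care is the bookkeeping in the $T_\Omega$ and $T_s$ computations, where one must track the involution $a\mapsto\bar a$, the sign $\epsilon(a)$, and the fact that $T_\Omega$ and $T_s$ each treat an arrow and its reverse asymmetrically, to confirm that the contributions combine into a single clean power of the relevant coordinate.
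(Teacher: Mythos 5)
Your proof is correct and is exactly the direct verification the paper intends when it declares the lemma ``clear from the definitions'' (the paper supplies no written proof). The bookkeeping for $T_\Omega$, $T_W$, and $T_s$ all checks out, so nothing further is needed.
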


Let $F_\iota(\bfv, W)$ be the variety of fixed points of $\mathfrak{M}(\bfv,W)$ under the action of $T_\iota$.  By \cite[Lemma 5.11.1]{CG:1997}, each connected component of $F_\iota(\bfv,W)$ is a smooth subvariety of $\mathfrak{M}(\bfv,W)$. For each connected component $C$ of $F_\iota(\bfv,W)$, let $A_C$ be the subvariety
\begin{equation*} 
A_C := \{ x \in \mathfrak{M}(\bfv,W) \mid \lim_{t \rightarrow \infty} t \cdot x \in C \}. 
\end{equation*}
By \cite[Theorem 4.1]{BB}, $A_C$ is an affine bundle over $C$, so is smooth and irreducible. 

Now define a map 
\begin{equation*}
\begin{aligned}
M_\iota \colon \Irr \mL(\bfv,W) & \rightarrow \Irr F_\iota(\bfv,W) \\
Z & \mapsto \text{Component $C$ such that $A_C \cap Z$ is dense in $Z$.}
\end{aligned}
\end{equation*}
Alternatively, we can think of $C$ as the irreducible component that contains $\lim_{t \to \infty} t \cdot x$ for generic $x \in Z$. Since $\mL(\bfv,W)$ is projective, and the set of points flowing to a given fixed point component is always algebraic, this map is well-defined.

\begin{Proposition} \label{prop:1-1}
If $\wt T_\iota>0$ then $M_\iota$ is injective. 
\end{Proposition}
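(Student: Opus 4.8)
The plan is to turn injectivity of $M_\iota$ into a dimension estimate for the attracting sets $A_C$, with the hypothesis $\wt T_\iota > 0$ entering through the fact (the preceding Lemma) that $T_\iota$ scales the symplectic form $\omega$ on $\mM(\bfv,W)$ with weight $m := \wt T_\iota$. Write $N = \frac12 \dim \mM(\bfv,W)$; by Theorem~\ref{L:th} every irreducible component of $\mL(\bfv,W)$ has dimension exactly $N$. The goal is to show that for each connected component $C$ of $F_\iota(\bfv,W)$ the attracting set $A_C$ satisfies $\dim A_C \le N$, and then to use irreducibility of $A_C$ to see that at most one irreducible component of $\mL(\bfv,W)$ can map to $C$.

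First I would fix a component $C$ and a point $p \in C$, and decompose the tangent space $T_p\mM(\bfv,W) = \bigoplus_k (T_p\mM)_k$ into weight spaces for the $\bc^*$-action $T_\iota$. The tangent space to the fixed locus $C$ is the weight-zero piece $(T_p\mM)_0$, and since $A_C$ is the attracting set for $\lim_{t\to\infty}$, the Bialynicki--Birula description already invoked (\cite{BB}) identifies $A_C$ as a smooth affine bundle over $C$ whose tangent space at $p$ is the non-positive part $\bigoplus_{k \le 0}(T_p\mM)_k$.

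Now comes the key step. Because $T_\iota$ acts on $\omega$ with weight $m$, the form pairs $(T_p\mM)_j$ nontrivially only against $(T_p\mM)_{m-j}$. Since $m>0$, two vectors of non-positive weight pair to weight at most $0 < m$, so $\omega$ restricts to $0$ on $T_p A_C$; that is, $T_p A_C$ is isotropic, whence $\dim A_C = \dim T_p A_C \le N$. (Equivalently, the perfect pairing gives $\dim(T_p\mM)_k = \dim(T_p\mM)_{m-k}$, so $\sum_{k\le 0}\dim(T_p\mM)_k = \sum_{k\ge m}\dim(T_p\mM)_k$, two sums over index sets that are disjoint once $m\ge 1$.) To finish, suppose $Z \in \Irr\mL(\bfv,W)$ has $M_\iota(Z)=C$, i.e.\ $A_C \cap Z$ is dense in $Z$. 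Then $Z = \overline{A_C\cap Z} \subseteq \overline{A_C}$, and $N = \dim Z = \dim(A_C \cap Z) \le \dim A_C \le N$ forces $\dim A_C = N$. As $C$ is smooth and connected it is irreducible, so $\overline{A_C}$ is irreducible of dimension $N$; a closed irreducible subvariety $Z \subseteq \overline{A_C}$ of the same dimension must equal $\overline{A_C}$. Hence any $Z$ mapping to $C$ is forced to equal $\overline{A_C}$, so $M_\iota^{-1}(C)$ has at most one element and $M_\iota$ is injective.

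I expect the main subtlety to be pinning down the weight conventions: one must check that $A_C$, the attracting set for $\lim_{t\to\infty}$, genuinely has tangent directions of non-positive weight, and that it is the sign $m = \wt T_\iota > 0$ which makes this space isotropic rather than coisotropic. This is exactly the point where the hypothesis is used, since flipping the sign of $\wt T_\iota$ would interchange the roles of the two attracting sets and make the relevant one the large (coisotropic) one, destroying the bound. Everything else is routine: smoothness of $\mM(\bfv,W)$, the equidimensionality of $\mL(\bfv,W)$ from Theorem~\ref{L:th}, and irreducibility of the affine bundle $A_C$.
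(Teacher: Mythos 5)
Your proof is correct and follows essentially the same route as the paper: the hypothesis $\wt T_\iota>0$ makes the tangent space to $A_C$ at a fixed point isotropic for the symplectic form, giving $\dim A_C \le \frac12\dim\mM(\bfv,W)=\dim\mL(\bfv,W)$, and then irreducibility of $A_C$ together with equidimensionality of $\mL(\bfv,W)$ forces at most one component to map to $C$. The paper states the isotropy claim without spelling out the weight-space decomposition, but your expanded version is exactly the intended argument.
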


\begin{proof} 
Pick $C \in \Irr F_\iota(\bfv,W)$ and $x \in C$. The symplectic form $\langle \cdot, \cdot \rangle$ on the tangent space ${\rm T}_x \mathfrak{M}(\bfv,W)$ is non-degenerate and $T_\iota$ acts with positive weight, so the tangent vectors in $A_C$ at $x$ form an isotropic subspace of ${\rm T}_x \mathfrak{M}(\bfv,W)$. Hence $\dim A_C \leq \frac{1}{2} \dim \mathfrak{M}(\bfv,W) = \dim \mathfrak{L}(\bfv,W)$. But $A_C$ is irreducible and $\mathfrak{L}(\bfv, W)$ is equidimensional, so $A_C$ cannot contain a dense subset of two distinct irreducible components.
\end{proof}

\begin{Remark}
Proposition \ref{prop:1-1} shows that, for any $\iota$ of positive weight, we can transport the crystal structure on $\coprod_\bfv \Irr \mL(\bfv,W)$ to a crystal structure on some subset of $\coprod_\bfv \Irr F_\iota(\bfv, W)$, and hence this gives a realization of $B(\Lambda)$.
\end{Remark}

In fact, we often want to work with the varieties $\mathfrak{F}(\bfv, W)$ of fixed points with respect to the whole torus $\cT$. For any given $\bfv$, a generic $\iota$ will satisfy $F_\iota(\bfv, W)= \mathfrak{F}(\bfv, W)$. However, we need to consider $\coprod_\bfv F_\iota(\bfv,W)$, and it can happen (in infinite type) that no $\iota$ satisfies $F_\iota(\bfv, W) = \mathfrak{F}(\bfv, W)$ for all $\bfv$ simultaneously. In these cases, we actually work with a collection of 1-parameter subgroups $\iota^{(N)}$ for all $N \in \bn$ with the following properties:
\begin{enumerate}
\item[(F1)] For any fixed $\bfv$ and all sufficiently large $N$, the fixed points of $T_{\iota^{(N)}}$ acting on $\mathfrak{M}(\bfv,W)$ are exactly the fixed points of $\cT$ acting on $\mathfrak{M}(\bfv,W)$, i.e., we have $F_{\iota^{(N)}}(\bfv,W)= \mathfrak{F}(\bfv, W)$.

\item[(F2)] For any fixed $\bfv$, the map $M_{\iota^{(N)}} \colon \Irr \mathfrak{M}(\bfv,W) \rightarrow \Irr F_{\iota^{(N)}}(\bfv, W)$ stabilizes for large enough $N$ (recalling that for large $N$ we have $\Irr F_{\iota^{(N)}}(\bfv, W)=  \Irr \mathfrak{F}(\bfv, W)$).

\item[(F3)] $\wt \iota^{(N)}$ is positive for all $N$.
\end{enumerate}
The same arguments as above show that such a family $\iota = \{ \iota^{(N)} \}$ gives an injective map 
\[
M_\iota \colon \coprod_\bfv \Irr \mL(\bfv,W) \to \coprod_\bfv \Irr \mathfrak{F}(\bfv,W).
\]

\begin{Remark}
We will be interested in collections $\iota = \{\iota^{(N)} \}_{N \in \bn}$ defined as follows: Fix an isomorphism $\cT \rightarrow (\bc^*)^k$, where $k={|\Omega|+ \dim_\bc(W)+1}$. Let $\pi_1, \ldots, \pi_k$ be the projections onto the various factors, and assume that $\pi_k = \pi_s$ as above. Fix real numbers $\xi_1, \ldots, \xi_{k-1}$, and choose a sequence $\iota^{(N)}$ such that $\wt(\pi_s \circ \iota^{(N)})$ is always positive, and for each $1 \leq r \leq k-1$,
\begin{equation*}
\lim_{N \rightarrow \infty} \frac{\wt(\pi_r \circ \iota^{(N)})}{\wt(\pi_s \circ \iota^{(N)})} = \xi_r. 
\end{equation*}
It is clear that, for generic $(\xi_1, \ldots, \xi_{k-1})$, the family $\iota^{(N)}$ has the required properties.
\end{Remark}

\begin{Remark}
We will see that, if $\Gamma= {\rm A}_n $ or ${\rm A}^{(1)}_n$, then all $\mathfrak{F}(\bfv, W)$ are finite sets of points. In fact, these are the only cases with connected $\Gamma$ where this happens. To see this, consider type ${\rm D}_4$, where the branch node is labeled $2$. For $W$ of dimension $(0,1,0,0)$, one can explicitly calculate that the irreducible component of $\mL(\bfv, W)$ corresponding to $f_2 f_1 f_3 f_4 f_2 v_+\in B(\infty)$ is isomorphic to ${\Bbb P}^1$, and that this whole component is fixed under the torus action. Any connected simply-laced Dynkin diagram other than ${\rm A}_n $ or ${\rm A}^{(1)}_n$ contains ${\rm D}_4$ as a subdiagram, so there is a fixed point component isomorphic to ${\Bbb P}^1$. 
\end{Remark}

\section{${\asl}_n$ specific background and definitions} \label{section:asl}

There are many reasons to expect the construction of \S\ref{sec:framework} to be particularly interesting for $\asl_n$. First, these are the simplest Dynkin diagrams with a cycle, and hence the simplest examples where $T_\Omega$ contributes (see Remark \ref{rem:W-form}). Second, in these cases the fixed point sets of $\cT$ are isolated; in fact they are naturally indexed by multi-partitions, so our methods lead to realizations of crystals based on these combinatorial objects. The rest of this paper will be devoted to this special case, and in this section we begin by collecting various $\asl_n$ specific background and introducing some specialized notation. From now on all quiver varieties are for the $\asl_n$ quiver with the cyclic orientation $\Omega$ shown in Figure~\ref{pic:QV}.

\subsection{Partitions and multi-partitions} \label{partitions-section}

A {\bf partition} is a weakly decreasing sequence of nonnegative integers $\lambda = (\lambda_{1}, \lambda_{2}, \ldots)$ such that $\lambda_k=0$ for all large enough $k$. Associated to a partition is its Ferrers diagram, which we draw on a fixed coordinate grid using ``Russian'' conventions, as shown in Figure~\ref{partition}. The {\bf dual partition} $\lambda'$ is obtained from $\lambda$ by reflecting the Ferrers diagram across the vertical line going through the origin. Equivalently, $\lambda'_j = \# \{ i \mid \lambda_i \ge j \}$. Let $\cP$ denote the set of all partitions, or equivalently the set of Ferrers diagrams.

The {\bf coordinates} of a box $b$ in a Ferrers diagram are the coordinates $(i,j)$ of the center of $b$, using the axes shown in Figure~\ref{partition}. Let $b = (i,j)$ be a box and $\lambda$ a partition. The {\bf arm length} of $b$ relative to $\lambda$ is $\arm_\lambda(b):= \lambda_i-j$. The {\bf leg length} of $b$ relative to $\lambda$ is $\leg_\lambda(b) = \lambda'_j - i$. These quantities are nonnegative if and only if $b \in \lambda$.

\begin{figure}

\setlength{\unitlength}{0.4cm}
\begin{center}
\begin{picture}(20,12)
\put(0,1){
\begin{picture}(20,11)

\put(10,0){\vector(1,1){10}}
\put(9,1){\line(1,1){8}}
\put(8,2){\line(1,1){7}}
\put(7,3){\line(1,1){7}}
\put(6,4){\line(1,1){5}}
\put(5,5){\line(1,1){5}}
\put(4,6){\line(1,1){2}}
\put(3,7){\line(1,1){1}}

\put(10,0){\vector(-1,1){10}}
\put(11,1){\line(-1,1){7}}
\put(12,2){\line(-1,1){6}}
\put(13,3){\line(-1,1){5}}
\put(14,4){\line(-1,1){5}}
\put(15,5){\line(-1,1){5}}
\put(16,6){\line(-1,1){3}}
\put(17,7){\line(-1,1){3}}
\put(18,8){\line(-1,1){1}}

\put(-1,10.5){$j$}
\put(21,10.5){$i$}

\put(9.6,-1){ $\tiny{{}_{0}}$}
\put(10,-1){ $\tiny{{}_{-1}}$}
\put(11,-1){ $\tiny{{}_{-2}}$}
\put(12,-1){ $\tiny{{}_{-3}}$}
\put(13,-1){ $\tiny{{}_{-4}}$}
\put(14,-1){ $\tiny{{}_{-5}}$}
\put(15,-1){ $\tiny{{}_{-6}}$}
\put(16,-1){ $\tiny{{}_{-7}}$}
\put(17,-1){ $\tiny{{}_{-8}}$}
\put(18,-1){ $\tiny{{}_{-9}}$}

\put(8.6,-1){ $\tiny{{}_{1}}$}
\put(7.6,-1){ $\tiny{{}_{2}}$}
\put(6.6,-1){ $\tiny{{}_{3}}$}
\put(5.6,-1){ $\tiny{{}_{4}}$}
\put(4.6,-1){ $\tiny{{}_{5}}$}
\put(3.6,-1){ $\tiny{{}_{6}}$}
\put(2.6,-1){ $\tiny{{}_{7}}$}
\put(1.6,-1){ $\tiny{{}_{8}}$}
\put(0.6,-1){ $\tiny{{}_{9}}$}

\thicklines

\put(10.7,3.8){$b$}

\put(-1.2,-1.15){$\cdots$}
\put(19.6,-1.15){$\cdots$}

\end{picture}}
\end{picture}
\end{center}
\vspace{0.3cm}

\caption{The Ferrers diagram of the partition $(7,6,5,5,5,3,3,1)$. The parts are the lengths of the ``rows'' of boxes sloping up and to the left. The $(x,y)$ coordinates are normalized so that the vertex of the partition has coordinates $(0.5, 0.5)$, and all boxes have unit side lengths. The center of each box has coordinate $(i,j)$ for some $i,j \in \bz$. For the box labeled $b$, $i=3$ and $j=2$. Here $\hook(b)= 8$, $\arm(b)=3$, and $\leg(b)=4$.
  \label{partition}}
\end{figure}
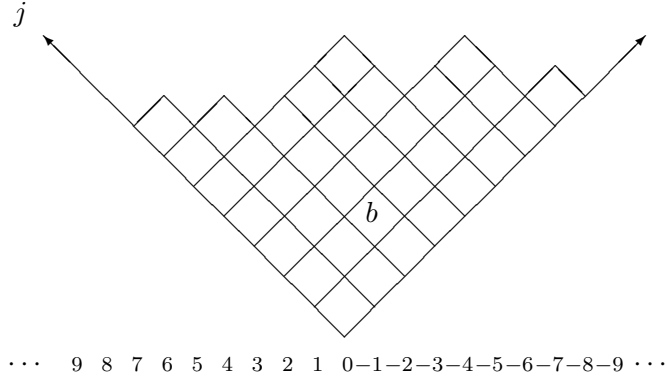

Let $\cP^\ell$ denote the set of $\ell$-tuples of partitions $\bl = (\lambda(1), \dots, \lambda(\ell))$, which we call multi-partitions. To distinguish boxes for different $\lambda(k)$ in a multi-partition $\bl$, we will sometimes use the notation $(k;i,j)$ to denote the box $(i,j)$ associated with $\lambda(k)$, so a box $b$ is associated with a triple of coordinates $(k_b; i_b, j_b)$.  A {\bf colored multi-partition} is a multi-partition $\bl = (\lambda(1), \ldots, \lambda(\ell))$ along with a chosen function $p \colon \{1, \dots, \ell \} \to \bz/n$. For $w= (w_{\overline 0}, \ldots, w_{\overline{n-1}})$, we say a colored multi-partition is of type $w$ if for all $\overline k \in \bz/n$, $\#\{ 1 \leq j \leq \ell \mid p(j) = \overline k \}=w_{\overline k}$.
  
\begin{Definition} \label{AR-def} 
Fix a colored multi-partition $\bl$. Given a box $b = (k;i,j) \in \lambda(k)$, the {\bf color} $\bar{c}(b)$ of $b$ is the residue of $p(k)-i+j$ modulo $n$.  Let $c(\bl) = (c_{\ol{0}}, \dots, c_{\ol{n} - \ol{1}})$ where $c_\ii$ is the number of $\ii$-colored boxes in $\bl$. Define $A(\bl)$ to be the set of boxes $b = (k;i,j)$ which can be added to $\lambda(k)$ so that the result is still a partition and $R(\bl)$ to be the set of boxes $b = (k;i,j)$ which can be removed from $\lambda(k)$ so that the result is still a partition. For each residue $\ii$ modulo $n$, define
\[
A_{\bar \imath}(\bl) = \{ b \in A(\bl) \mid \overline c(b)=\bar \imath \}, \quad R_{\bar \imath}(\bl)= \{ b \in R(\bl) \mid \overline c(b) = \bar \imath \},
\]
which we call the set of {\bf addable $\ii$-nodes} and {\bf removable $\ii$-nodes}, respectively.
\end{Definition}

\begin{Definition} \label{defn:slopedatum} 
For a fixed $\ell$, a {\bf slope datum} is an $(\ell+2)$-tuple of positive real numbers $\bx = (\xi_{\Omega}, \xi_{\overline{\Omega}}, \xi_1, \dots, \xi_\ell)$.
\end{Definition}

\begin{Definition} \label{defn:height}
Fix a multi-partition $\bl = (\lambda(1), \ldots, \lambda(\ell))$ and a slope datum $\bx$. We define the {\bf height} of a box $b = (k;i,j)$ by $h^\bx(b) = \xi_k + \xi_{\Omega} i + \xi_{\overline \Omega} j$. We call such a datum {\bf general} if $b \ne b'$ implies that $h^\bx(b) \ne h^\bx(b')$. We call such a datum {\bf integral} if $\xi_\Omega, \xi_{\overline \Omega}, \xi_1, \ldots, \xi_\ell \in \bz_{>0}$. We call such a datum {\bf aligned} if, for all $i,j$, $|\xi_i-\xi_j| < \xi_\Omega+ \xi_{\overline \Omega}$.
\end{Definition}

\begin{Definition} \label{def:multiillegal} A triple $(b, i, j)$ is called {\bf $\bx$-illegal} for $\bl$ if 
\begin{enumerate}
\item $b \in \lambda(i)$ 
\item $n$ divides $p(i) - p(j) + \arm_{\lambda(i)}(b) + \leg_{\lambda(j)}(b) +1$, and
\item \label{eqn:multiillegal} $-\xi_\Omega < \xi_j-\xi_i+ \xi_{\Omega} \leg_{\lambda(j)}(b) - \xi_{\ol{\Omega}} \arm_{\lambda(i)}(b) < \xi_{\overline \Omega}$.
\end{enumerate}
We say that $\bl$ is {\bf $\bx$-regular} if it contains no $\bx$-illegal triples.
\end{Definition}

\subsection{Torus actions} \label{ss:torus-combinatorics}
Fix $\ell > 0$, and $\Lambda = \Lambda_{\ii_1} + \cdots + \Lambda_{\ii_\ell}$, where $\Lambda_\ii$ is the $\ii$th fundamental weight. We now apply the framework from \S\ref{sec:framework} to realize $B(\Lambda),$ so fix a graded vector space with dimension $(w_\ii)_{\ii \in \bz/n}$, where $w_\ii = |\{ 1 \leq j \leq \ell \mid \ii_j = \ii \}|$. 

Let $T = \bc^* \times \bc^* \times T_W$, and consider the action of $T$ on $\mathfrak{M}(\bfv,W)$ given by 
\[
(t_\Omega, t_\oOmega, D) \cdot [x, s,t] = [x',s',t'],
\]
where $s'=t_\Omega t_\oOmega D^{-1}s$, $t'=t D$, $x'_a= t_\Omega x_a$ for $a \in \Omega$, and $x'_a= t_\oOmega x_a$ for $a \in \oOmega$. Fix an $I$-graded basis $\{w_1,\ldots, w_\ell\}$ for $W$ consisting of simultaneous eigenvectors for $T_W$, where $w_j \in W_{\ii_j}$. Then 
\[
\begin{aligned}
T & \cong (\bc^*)^{\ell+2} \\
(t_\Omega, t_\oOmega, D)& \leftrightarrow (t_\Omega, t_\oOmega, t_1, \ldots, t_\ell).
\end{aligned}
\]

\begin{Remark} \label{rem:sln:torus}
In the above construction, $T$ is subtorus of the torus $\cT$ from \S\ref{ssec:torus}. Thus the construction below is really the application of the methods from \S\ref{sec:framework} to the $\asl_n$ case. 
\end{Remark}

\begin{Remark} \label{rem:cons}
This same torus action has been considered in e.g. \cite{instantonI}. Here we have made a slight change of conventions; to match that paper we should instead define $s'=t_\Omega t_\oOmega Ds$, $t'=t D^{-1}$. However, our conventions are more convenient in \S\ref{s:asln-results} below. 
\end{Remark}

\begin{Proposition}[{\cite[Proposition 2.9]{instantonI}}] \label{prop:fixed-points} 
The fixed points of $\mathfrak{M}(\bfv,W)$ are indexed by $\ell$-multi-partitions $\bl$ of $|{\bf v}|$ with $c(\bl)= \bfv$, where the multi-partition ${\bl}$ is associated with the point $p_{\bl} \in \mathfrak{M}(\bfv,W)$ shown in Figure~\ref{fig:fixed}.
\end{Proposition}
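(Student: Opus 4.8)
\section*{Proof proposal}

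The plan is to identify a $T$-fixed point of $\mathfrak{M}(\bfv,W)$ with a $T$-equivariant structure on $V$ and then read off a multi-partition from the weight decomposition. First I would use that $\GL(V)$ acts freely on the stable locus $E(V,W)^{\rm st}$: a point $[x,s,t]$ is $T$-fixed precisely when every $\tau \in T$ carries $(x,s,t)$ into its own $\GL(V)$-orbit, and freeness then produces a \emph{unique} element $g(\tau)\in\GL(V)$ with $\tau\cdot(x,s,t)=g(\tau)\cdot(x,s,t)$. Uniqueness forces $\tau\mapsto g(\tau)$ to be a homomorphism $\phi\colon T\to\GL(V)$, so that $V$ becomes a $T$-module compatible with the $I$-grading. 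I then decompose $V=\bigoplus_\chi V_\chi$ into weight spaces for this action.

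Next I would track weights using the basis $w_1,\dots,w_\ell$ of $W$. The relation $t_iD_i=g_it_i$ forced by $t'=tD$ shows $t(w_k)$ has weight $t_k$; the relations forced by $x'_a=t_\Omega x_a$ ($a\in\Omega$) and $x'_a=t_\oOmega x_a$ ($a\in\oOmega$) show that applying an $\Omega$-arrow (resp.\ $\oOmega$-arrow) multiplies the weight by $t_\Omega$ (resp.\ $t_\oOmega$). Hence any vector reached from $t(w_k)$ by a path using $i$ arrows from $\Omega$ and $j$ arrows from $\oOmega$ lies in the weight space labelled by the monomial $t_k\,t_\Omega^{\,i}\,t_\oOmega^{\,j}$. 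Because $T\cong(\bc^*)^{\ell+2}$, these monomials are distinct for distinct triples $(k;i,j)$ with $i,j\ge 0$, so the label is recovered from the weight. Matching weights for $s\colon V\to W$, whose equivariance carries the extra twist $t_\Omega t_\oOmega$, would force $t_\Omega^{\,1+i}t_\oOmega^{\,1+j}t_k=t_m$, which is impossible since $1+i,1+j>0$; therefore $s=0$ at every fixed point.

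With $s=0$ the equation $\mu(x,0,t)=0$ becomes $[X,Y]=0$ for $X:=\sum_{a\in\oOmega}x_a$ and $Y:=\sum_{a\in\Omega}x_a$ on $\bigoplus_iV_i$. Every path-image from $t(w_k)$ using $i$ arrows from $\Omega$ and $j$ from $\oOmega$ then reduces to $Y^iX^jt(w_k)$, so each weight space is at most one-dimensional, and stability ($\im(t)$ generates $V$) shows $V$ is exactly the span of the $Y^iX^jt(w_k)$. Setting $\lambda(k):=\{(i,j)\mid Y^iX^jt(w_k)\neq 0\}$, nilpotency makes each $\lambda(k)$ finite and commutativity makes it an order ideal --- if $Y^iX^jt(w_k)\neq 0$ then neither $Y^{i-1}X^jt(w_k)$ nor $Y^iX^{j-1}t(w_k)$ can vanish --- which is exactly the condition that $\lambda(k)$ be a Ferrers diagram. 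Reading off the $I$-grading identifies the number of color-$\ii$ boxes with $\dim V_\ii$, giving $c(\bl)=\bfv$ and $|\bl|=|\bfv|$. Conversely, given such a $\bl$ I would build $p_\bl$ by giving $V$ a basis indexed by the boxes, letting $X,Y$ be the evident box-shift operators, $t(w_k)$ the corner box, and $s=0$; one checks $\mu=0$ and stability directly, and the weight decomposition exhibits it as $T$-fixed. Distinct multi-partitions yield different weight-multiplicity patterns, hence different points, so $\bl\mapsto p_\bl$ is the claimed bijection.

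I expect the middle step to be the main obstacle: rigorously producing the weight decomposition and proving it is ``monomial.'' The delicate points are justifying that $\phi$ is an honest homomorphism (so $V$ is genuinely a $T$-representation), pinning the weight twists down precisely enough to conclude $s=0$, and combining $[X,Y]=0$ with nilpotency and stability to collapse all same-degree path-images onto a single basis vector. Everything afterward --- the Ferrers-diagram shape, the bookkeeping $c(\bl)=\bfv$, and the converse construction --- is then routine.
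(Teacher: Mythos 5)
Your argument is correct, and it is essentially the standard proof of this fact; the paper itself does not prove the proposition but cites it from Nakajima--Yoshioka \cite[Proposition 2.9]{instantonI}, where the argument runs along exactly the lines you describe (freeness of the $\GL(V)$-action yields a homomorphism $T \to \GL(V)$, the weight decomposition is monomial, $s=0$ by weight considerations, and $[X,Y]=0$ collapses each weight space to a line spanned by $Y^iX^jt(w_k)$). One small caveat: since the proposition concerns fixed points of $\mathfrak{M}(\bfv,W)$ rather than $\mathfrak{L}(\bfv,W)$, nilpotency of $x$ is not a hypothesis you may invoke to see that each $\lambda(k)$ is finite; instead, finiteness follows directly from $\dim V < \infty$ together with the fact that the vectors $Y^iX^jt(w_k)$ lie in pairwise distinct weight spaces, and nilpotency of $x$ is then a \emph{consequence} of the weight decomposition (each arrow strictly increases the exponent of $t_\Omega$ or $t_{\ol\Omega}$). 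With that adjustment the proof is complete.
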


The multi-partition for Proposition~\ref{prop:fixed-points} naturally comes with a coloring map $p$ from the component partitions to $\bz/n$, where the partition corresponding to $w_k$ is colored $\ii$ where $w_k \in W_\ii$. 

\begin{figure}
\begin{tikzpicture}[scale=0.6]

\draw 

(2,-1)--(-1,2)
(3,0)--(0,3)
(4,1)--(2,3)
(2,-1)--(4,1)
(1,0)--(3,2)
(0,1)--(2,3)
(-1,2)--(0,3)

(7,-1)--(9,1)
(6,0)--(8,2)
(5,1)--(6,2)
(7,-1)--(5,1)
(8,0)--(6,2)
(9,1)--(8,2)

(12,-1)--(10,1)
(13,0)--(11,2)
(14,1)--(12,3)
(12,-1)--(14,1)
(11,0)--(13,2)
(10,1)--(12,3)

(17,-1)--(15,1)
(18,0)--(16,2)
(17,-1)--(18,0)
(16,0)--(17,1)
(15,1)--(16,2)

(2,-2) node {$w_1$}
(7,-2) node {$w_2$}
(12,-2) node {$w_3$}
(17,-2) node {$w_4$}

(0,2) node {$\overline 2$}
(1,1) node {$\overline 1$}
(2,0) node {$\overline 0$}
(2,2) node {$\overline 0$}
(3,1) node {$\overline 2$}

(6,1) node {$\overline 2$}
(7,0) node {$\overline 1$}
(8,1) node {$\overline 0$}

(11,1) node {$\overline 2$}
(12,0) node {$\overline 1$}
(12,2) node {$\overline 1$}
(13,1) node {$\overline 0$}

(16,1) node {$\overline 0$}
(17,0) node {$\overline 2$}
;

\draw [->] (2.2,0.2)--(2.8,0.8);
\draw [->] (1.8,0.2)--(1.2,0.8);
\draw [->] (1.2,1.2)--(1.8,1.8);
\draw [->] (2.8,1.2)--(2.2,1.8);
\draw [->] (0.8,1.2)--(0.2,1.8);

\draw [->] (7.2,0.2)--(7.8,0.8);
\draw [->] (6.8,0.2)--(6.2,0.8);

\draw [->] (12.2,0.2)--(12.8,0.8);
\draw [->] (11.8,0.2)--(11.2,0.8);
\draw [->] (11.2,1.2)--(11.8,1.8);
\draw [->] (12.8,1.2)--(12.2,1.8);

\draw [->] (16.8,0.2)--(16.2,0.8);

\draw [->] (2,-1.7)--(2,-0.35);
\draw [->] (7,-1.7)--(7,-0.35);
\draw [->] (12,-1.7)--(12,-0.35);
\draw [->] (17,-1.7)--(17,-0.35);
\end{tikzpicture}

\caption{\label{fig:fixed} The fixed point $p_{\bm \lambda}$ associated to a multi-partition $\bl = ((3,2), (2,1), (2,2), (2))$. In this example, $n=3$, and we take $(w_{\overline 0}, w_{\overline 1}, w_{\overline 2})=(1,2,1)$, and $(v_{\overline 0}, v_{\overline 1}, v_{\overline 2})= (5,4,5).$ Each $\ii$ colored box of the multi-partition corresponds to a basis element of $V_\ii$. The symbols $w_k$ represent a basis for $W$ respecting the $I$-grading. When representing the $x_{\ii}$ and ${}_\ii x$ as matrices in this basis, the arrows give nonzero entries equal to $1$ (the color of the boxes at the head and tail of $a$ uniquely specify which arrow). The arrows pointing up represent matrix elements of $1$ for $t$. All other matrix elements are $0.$}

\end{figure}
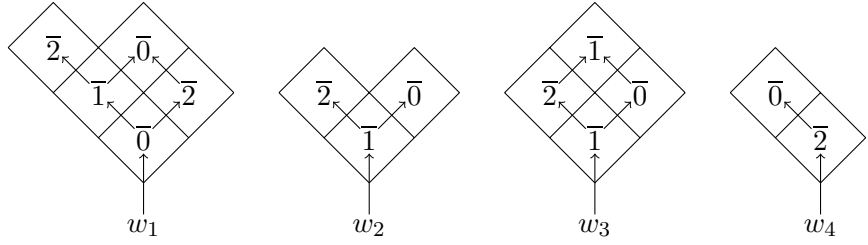

\subsection{Local coordinates} \label{ss:coordinates} 
Fix a $\bz/n$-graded vector space $W$ with $\dim W=\bfw$, and a colored multi-partition $\bl$ of type $\bfw$ with $c(\bl)=\bfv$. Introduce the type-specific notation  ${}_{\ii\pm \bar 1} x_\ii$  to mean $x_a$ where $a$ is the arrow from node $\ii$ to node $\ii\pm \bar 1$.

The torus fixed point $p_\bl$ from Figure \ref{fig:fixed} is a point in $\mL(\bfv, W)$. For any point $[V,x,s,t]$ in a neighborhood of $p_\bl$ in $\mM(\bfv, W)$,
\[
\{ b_{k;i,j}:=  {}_{p(k)-i+j} x_{p(k)-i+j-1} \circ \cdots \circ {}_{p(k)-i+1} x_{p(k)-i} \circ {}_{p(k)-i} x_{p(k)-i+1}  \circ \cdots \circ  {}_{{p(k)}-1} x_{p(k)}  \circ t(w_k) \mid (i,j) \in \lambda(k) \}
\]
forms a basis for $V$. This allows us to uniquely choose representatives $[\bar{V},x,s,t]$ for each point in this neighborhood: Let $\bar{V}$ be the vector space with basis indexed by $\{ (k;i,j): (i,j) \in \lambda(k)\}$. If $p$ has representative $[V,x,s,t]$, consider the vector space isomorphism $\gamma \colon V \rightarrow \bar{V}$ which takes $b_{k;i,j}$ to the basis element $(k;i,j)$. Then our chosen representative is $[\bar{V}, \gamma x \gamma^{-1}, s \gamma^{-1}, \gamma t ]$. Notice that this does not depend on the original choice of representative $[V, x,s,t]$. The only matrix elements in this chosen representative that differ from those of $p_\bl$ are:
\begin{enumerate}[leftmargin=20pt]
\item The matrix element of ${}_{\ii-\bar 1} \bar{x}_\ii$ from $b= b_{k;i,j}$ to $b'= b_{k';i',j'}$ where $\overline c (b') = \overline c(b)-1$, and either $j \neq 1$ or $b_{k; i+1, j} \not \in \bl$.

\item The matrix element of ${}_{\ii+ \bar 1} x_\ii$ from $b= b_{k;i,j}$ to $b'= b_{k';i',j'}$ where $\overline c (b') = \overline c(b)+1$ and $b_{k; i, j+1} \not \in \bl$.

\item The matrix coefficients of the maps $s_\ii$.  
\end{enumerate}

Let $\mE_\bl$ be the affine space with coordinates $\{e_{b \rightarrow b'}\}$ and $\{ e_{b'' \rightarrow r}\}$ such that  
\[
b=(k;i,j) \in \bl,\ b'=(k';i',j') \in \bl,\ b''=(k'';i'',j'')  \in \bl,\ p(r)= \overline c (b''),\;\; \text{ and either}
\]
\begin{itemize}[leftmargin=10pt]
\item $\overline c (b') = \overline c(b)+1$ and $(k; i, j+1) \not \in \bl$, or

\item $\overline c (b') = \overline c(b)-1$, and either $j \neq 1$ or $(k; i+1, j) \not \in \bl$.
\end{itemize}
We think of these coordinates as indexing the above matrix elements. Let $S_\bl$ be the reduced subvariety of $\mE_\bl$ consisting of those points which satisfy the preprojective relations. The previous paragraph shows that the natural map $m \colon S_\bl \rightarrow \mM(\bfv, W)$ is an injection. The image of $m$ consists of all $[V; x,s,t] \in \mM(\bfv, W)$ such that the elements $b_{k;i,j}$ are linearly independent, which is clearly an open condition, so in fact $m$ is a bijection from $S_\bl$ to an open subset of $\mM(\bfv, W)$ containing $p_\bl$. Furthermore, $m$ is birational \cite[Proposition 7.16]{harris}, and, since $\mM(\bfv, W)$ is smooth, $m$ is an isomorphism onto its image by Zariski's main theorem \cite[\S III.9, Original form]{mumford}. Then $m^{-1}$ is a local embedding of $\mM(\bfv,W)$ into $\mE_\bl$ near $p_\bl$. Via restriction, we get a local embedding of any irreducible component $Z$ of $\mL(\bfv, W)$ containing $p_\bl$ into $\mE_\bl$.

\begin{remark}
A similar description of local charts in the case $|\bfw|=1$ is given in \cite[Proposition 2.1]{haiman} using the language of Hilbert schemes, and the general case is discussed in \cite{Nakajima:1999}.
\end{remark}

Since we now have a chosen a canonical representative for each point in a neighborhood of each $p_\bl$, we can define the following useful subspace: Fix a slope datum $\bx$. For each $H \in \br$, 
\begin{equation} \label{VH}
V^{\geq H} = \text{span} \{ b_{k;i,j} : h^\bx(b) \geq H \} \quad \text{and} \quad V^{> H} = \text{span} \{ b_{k;i,j} : h^\bx(b) > H \}.
\end{equation}

\subsection{The $\asl_n$ quiver variety and the loop quiver} \label{sec:QLQ}

Consider the doubled quiver associated to the graph consisting of a single vertex and a single edge, which we denote by $\widetilde Q$:

\vspace{-.2in}
\begin{center}
\setlength{\unitlength}{0.3cm}
\begin{tikzpicture}

\draw (0,2)  node [shape=circle, fill=black, scale=0.6,draw] {} ;

\draw[-biggertip] (0.1,1.9) .. controls (1.1, 0.9) and ( 1.1, 3.1) .. (0.1,2.1);
\draw[-biggertip] (-0.1,1.9) .. controls (-1.1, 0.9) and ( -1.1, 3.1) .. (-0.1,2.1);

\end{tikzpicture}
\end{center}
\vspace{-.2in}
This does not satisfy the conditions of \S\ref{ssec:Nak-varieties}, but the definition of $\mathfrak{M}_{\widetilde Q}(v,W)$ still makes sense (although not all the statements of \S\ref{ssec:Nak-varieties} remain true); in fact, by \cite[Theorem 2.1]{Nakajima:1999}, the resulting variety can be identified with the so called ``punctual quot scheme." As explained in \cite[Chapter 4]{Nakajima:1999}, there is a geometric realization of the McKay correspondence that allows one to realize the quiver varieties for any symmetric affine Kac--Moody algebra using $\mathfrak{M}_{\widetilde Q}(v,W)$. We now explain this construction for the case of $\asl_n$. In \S\ref{sec-quot} below we will explain the meaning of our results in this language of quot schemes. 

For a point in $\mathfrak{M}_{\widetilde Q}(v,W)$, we use $x$ and $y$ to mean $x_a$ and $x_{\ol{a}}$ for the two arrows of the quiver. Fix an order $n$ linear automorphism $\psi$ of $W$ and a primitive $n^{\rm th}$ root of unity $\zeta$. There is an automorphism $\phi_n$ of $\mathfrak{M}_{\widetilde Q}(v,W)$ defined by
\begin{equation*}
x \mapsto \zeta x, \;\; y \mapsto \zeta^{-1} y, \;\; s \mapsto \psi \circ s, \;\; t \mapsto t \circ \psi^{-1}.  
\end{equation*}

For $i=0, \ldots, n-1$, let $W_i$ be the $\zeta^i$-eigenspace of $\psi$, and set $\bfw_i = \dim W_i$. For each $[V,x,s,t] \in \mathfrak{M}_{\widetilde Q}(v,W)^{\phi_n}$, there is an induced endomorphism on $V$: since $[V, x,s,t]$ and $[V, \zeta x, y \zeta^{-1}, \psi \circ s, t \circ \psi^{-1}]$ are equivalent under the $\GL(V)$ action, there is an element of $\GL(V)$ taking the first to the second. This endomorphism clearly has order $n$, so the eigenspace decomposition gives $V$ a $\bz/n$ grading. As in \cite[\S 4.2]{Nakajima:1999},
\begin{equation*}
\mathfrak{M}_{\widetilde Q}(v,W)^{\phi_n} \cong \coprod_{|{\bf v}| = v} \mathfrak{M}(\bfv, W),
\end{equation*}
where $\mathfrak{M}(\bfv, W)$ consists of those points where the dimension of each $\zeta^k$ eigenspace in $V$ is $v_k$.

The action of $T = (\bc^*)^2 \times (\bc^*)^\ell$ on $\mM(\bfv, W)$ is inherited from the action on $\mM_{\widetilde Q}(v, W)$ defined by
\[
(t_\Omega, t_\oOmega, D) \cdot [x,y, s,t] = [t_\Omega x, t_{\oOmega} y, t_\Omega t_\oOmega Ds, tD^{-1}].
\]
Furthermore, by \cite[Proposition 2.9]{instantonI}, the fixed points of the action on $\mM_{\widetilde Q}(v, W)$ are all in $\coprod_{|{\bf v}| = v} \mathfrak{M}(\bfv, W)$ and as such are indexed by multi-partitions.

Choose coordinates $(t_\Omega, t_{\overline \Omega}, t_1, \ldots, t_\ell)$ for $T$, where $(t_1, t_2, \ldots, t_\ell) = T_W$ and each $t_k$ is homogeneous with respect to the $\bz/n$-grading on $W$. Then $T$ acts on the tangent space to $p_\bl$ in  $\mM_{\widetilde Q}(v,W)$, and this action preserves the tangent space to $p_\bl$ in $\mathfrak{M}(\bfv,W)$. The next result follows from \cite[Theorem 2.11]{instantonI} (where we have modified the statement to match our conventions, as mentioned in Remark \ref{rem:cons}).

\begin{Proposition} \label{prop:char} 
The character of $T$ acting on the tangent space at $p_\bl$ in $\mathfrak{M}_{\widetilde Q}(v,W)$ is
\begin{equation*}
\sum_{\substack{(b, k, k') \\ b \in \lambda(k) }}
t_{k'}^{-1} t_k t_{\Omega}^{-\leg_{\lambda(k')}(b)} t_{\ol{\Omega}}^{\arm_{\lambda(k)}(b) + 1} 
+ t_{k'}t_k^{-1}  t_{\Omega}^{\leg_{\lambda(k')}(b) + 1} t_{\ol{\Omega}}^{-\arm_{\lambda(k)}(b)}. 
\end{equation*}
where the sum is over all triples $(b,k, k')$ for $k,k'=1,\dots,\ell$ and $b \in \lambda(k)$.
\end{Proposition}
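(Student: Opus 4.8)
The plan is to compute the character of the $T$-action on the tangent space at the fixed point $p_{\bl}$ directly, using the identification $\mM_{\widetilde Q}(v,W)^{\phi_n} \cong \coprod \mM(\bfv,W)$ and the known description of the tangent space to a quiver variety (equivalently, to the punctual quot scheme). Since $\mM_{\widetilde Q}(v,W)$ is the punctual quot scheme by \cite[Theorem 2.1]{Nakajima:1999}, the tangent space at a fixed point has a standard description as an Euler-characteristic-type expression, and the $T$-weights can be read off from the combinatorics of the multi-partition. The statement claims a specific two-term-per-box formula, so the task is really to match my computed character against the asserted one.

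**First I would** set up the tangent space computation at $p_{\bl}$. Recall that for Nakajima quiver varieties the tangent space at a point $[x,s,t]$ is the middle cohomology of the complex $\gl(V) \to E(V,W) \to \gl(V)$ built from the differential of the moment map and the infinitesimal $\GL(V)$-action; since $p_{\bl}$ is a smooth point of the moment-map fiber and the action is free, the tangent space is the alternating sum $E(V,W) - \gl(V)\otimes(1+t_\Omega t_{\oOmega})$-type virtual expression, computed $T$-equivariantly. The key input is that at $p_{\bl}$ the vector space $V$ has a $T$-weight basis indexed by the boxes of $\bl$: a box $b=(k;i,j)$ carries the weight $t_k\, t_\Omega^{\,?}\, t_{\oOmega}^{\,?}$ dictated by how many times one applies the $x_a$ (for $a\in\Omega$, scaled by $t_\Omega$) and $x_{\bar a}$ (scaled by $t_{\oOmega}$) to reach $b$ from $t(w_k)$, as in Figure~\ref{fig:fixed}. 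Tracking these weights, the virtual character of the tangent space becomes a sum over ordered pairs of boxes, which by the standard cancellation of arm/leg contributions (the ``hook-length'' reorganization familiar from Hilbert-scheme tangent-space computations) collapses to one term per triple $(b,k,k')$ with $b\in\lambda(k)$.

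**The main obstacle** I expect is the bookkeeping of the two gradings simultaneously: the $\bz/n$-grading on $V$ that arises from the $\phi_n$-fixed-point description, and the $(\bc^*)^2\times(\bc^*)^\ell$-torus weights. Because the result is ultimately a statement about the \emph{full} tangent space in $\mM_{\widetilde Q}(v,W)$ (not yet projected onto the $\asl_n$ piece), I should be careful to use the loop-quiver tangent space and only afterward relate it to $\mM(\bfv,W)$; the $\phi_n$-equivariance is what lets me keep $t_\Omega,t_{\oOmega}$ honest even though the underlying loop quiver has no intrinsic orientation data. Concretely, the cross terms $t_{k'}^{-1}t_k$ and $t_{k'}t_k^{-1}$ in the claimed formula come from the two maps $x$ and $y$ of $\widetilde Q$ respectively, and the exponents $-\leg_{\lambda(k')}(b)$, $\arm_{\lambda(k)}(b)+1$ (and their counterparts) encode exactly the position of $b$ relative to the two partitions $\lambda(k)$ and $\lambda(k')$; verifying that the arm/leg exponents come out with the correct signs and the $+1$ shift in the right slot is the delicate part.

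**Finally**, since the proposition is quoted essentially verbatim from \cite[Theorem 2.11]{instantonI} with only a conventions change, the cleanest route is to invoke that theorem directly and then apply the substitution recorded in Remark~\ref{rem:cons}: replacing $s'=t_\Omega t_{\oOmega}Ds,\ t'=tD^{-1}$ by $s'=t_\Omega t_{\oOmega}D^{-1}s,\ t'=tD$ amounts to inverting the $D$-weights, i.e.\ swapping $t_k \leftrightarrow t_k^{-1}$ uniformly. I would check that this swap carries the formula of \cite{instantonI} to the one stated here, which is a one-line verification once the original formula is written down, and that the $t_\Omega,t_{\oOmega}$ exponents are unaffected by the $D$-convention change since $D$ acts only through $T_W$. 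This reduces the proof to citing the established computation and performing the transparent change-of-variables check.
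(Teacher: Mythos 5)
Your proposal is correct and, in its final paragraph, lands on exactly what the paper does: the paper offers no independent computation but simply cites \cite[Theorem 2.11]{instantonI} and applies the convention change $t_k \leftrightarrow t_k^{-1}$ recorded in Remark~\ref{rem:cons} (your observation that the $t_\Omega, t_{\ol{\Omega}}$ exponents are untouched by this swap is the right sanity check). The preceding sketch of a direct tangent-complex computation is a reasonable outline of how \cite{instantonI} proves its theorem, but it is not needed and is not carried out in the paper.
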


\begin{Lemma} \label{lemma:tangentfixedpts} 
The tangent space to $\mathfrak{M}(\bfv,W) \subset \mathfrak{M}_{\widetilde Q}(v,W)$ is spanned by those basis vectors from Proposition \ref{prop:char} which satisfy the extra condition that
\begin{equation} \label{eq:is-l-hook}
p(k)-p(k') +\arm_{\lambda(k)}(b) + \leg_{\lambda(k')}(b)+1 \equiv 0 \pmod n.
\end{equation}
\end{Lemma}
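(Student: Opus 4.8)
The plan is to identify $\mathfrak{M}(\bfv,W)$ with a union of connected components of the $\phi_n$-fixed locus inside $\mathfrak{M}_{\widetilde Q}(v,W)$ and then to compute the tangent space at $p_\bl$ as the $\phi_n$-invariant part of the ambient tangent space. The variety $\mathfrak{M}_{\widetilde Q}(v,W)$ is smooth (the stability condition makes the $\GL(V)$-action free, so Marsden--Weinstein reduction applies as in \S\ref{ssec:Nak-varieties} despite the presence of the loop), and $\phi_n$ has finite order $n$. Hence the fixed locus is smooth, and at any fixed point
\[
T_{p_\bl}\big(\mathfrak{M}_{\widetilde Q}(v,W)^{\phi_n}\big) = \big(T_{p_\bl}\mathfrak{M}_{\widetilde Q}(v,W)\big)^{\phi_n}.
\]
Since the fixed locus decomposes as $\coprod_{|\bfv|=v}\mathfrak{M}(\bfv,W)$, with the summands distinguished by the locally constant invariant $\bfv$, each $\mathfrak{M}(\bfv,W)$ is open and closed in $\mathfrak{M}_{\widetilde Q}(v,W)^{\phi_n}$. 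Therefore $T_{p_\bl}\mathfrak{M}(\bfv,W)$ is precisely the $\phi_n$-invariant subspace of $T_{p_\bl}\mathfrak{M}_{\widetilde Q}(v,W)$, and the problem reduces to deciding which of the weight vectors of Proposition~\ref{prop:char} are $\phi_n$-invariant.

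The key observation is that $\phi_n$ is itself an element of $T$. Comparing its definition with the $T$-action in the convention used for Proposition~\ref{prop:char} (that is, $s'=t_\Omega t_{\oOmega}D^{-1}s$, $t'=tD$, as in \S\ref{ss:torus-combinatorics} and Remark~\ref{rem:cons}) forces $t_\Omega=\zeta$, $t_{\oOmega}=\zeta^{-1}$, and $D=\psi^{-1}$. In the coordinates $(t_\Omega,t_{\oOmega},t_1,\dots,t_\ell)$ this is the element $(\zeta,\zeta^{-1},\zeta^{-p(1)},\dots,\zeta^{-p(\ell)})$, because $\psi$ scales $w_k\in W_{p(k)}$ by $\zeta^{p(k)}$. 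Consequently $\phi_n$ preserves the $T$-weight decomposition of the tangent space and acts on a weight space of weight $\chi$ by the scalar $\chi(\phi_n)$, a power of $\zeta$. A weight vector is $\phi_n$-invariant precisely when $\chi(\phi_n)=1$.

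Finally I would evaluate this scalar on the two families of weights in Proposition~\ref{prop:char}. Substituting $t_\Omega=\zeta$, $t_{\oOmega}=\zeta^{-1}$, $t_k=\zeta^{-p(k)}$ into the first weight $t_{k'}^{-1}t_k\, t_\Omega^{-\leg_{\lambda(k')}(b)}t_{\oOmega}^{\arm_{\lambda(k)}(b)+1}$ yields $\zeta$ raised to
\[
p(k')-p(k)-\arm_{\lambda(k)}(b)-\leg_{\lambda(k')}(b)-1,
\]
which vanishes modulo $n$ if and only if \eqref{eq:is-l-hook} holds. The second weight $t_{k'}t_k^{-1}t_\Omega^{\leg_{\lambda(k')}(b)+1}t_{\oOmega}^{-\arm_{\lambda(k)}(b)}$ produces the opposite exponent, hence the same congruence, so the two terms attached to a given triple $(b,k,k')$ are simultaneously invariant or not. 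This exhibits the claimed spanning set.

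The step requiring the most care is the second one: bookkeeping conventions so that $\phi_n$ is matched to the correct element of $T$. The sign of the exponent of $\zeta$, and hence whether one recovers the congruence \eqref{eq:is-l-hook} or its reflection, hinges entirely on using the same $D$-versus-$D^{-1}$ convention in which Proposition~\ref{prop:char} is phrased (the issue flagged in Remark~\ref{rem:cons}); applying the action of \S\ref{sec:QLQ} verbatim instead would flip a sign. The first step, the identity between the tangent space to the fixed locus and the space of invariants, is standard once smoothness of $\mathfrak{M}_{\widetilde Q}(v,W)$ is in hand.
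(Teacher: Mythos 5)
Your proof is correct and follows essentially the same route as the paper's: identify $\phi_n$ as an element of $T$, take the $\phi_n$-invariants of the character from Proposition~\ref{prop:char}, and justify that the tangent space to the fixed locus equals the invariant subspace of the ambient tangent space (the paper does this last step by an explicit averaging of a $1$-parameter family, which is the standard fact you quote). Your careful tracking of the $D$-versus-$D^{-1}$ convention is well placed --- with your identification $t_k=\zeta^{-p(k)}$ the exponent lands exactly on the congruence \eqref{eq:is-l-hook}, matching the intended statement.
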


\begin{proof}
Notice that the automorphism $\phi_n$ of $\mM_{\tilde Q}(v, W)$ is in fact the action of $(\zeta_n, \zeta_n^{-1}, D_n) \in T$, where $D_n$ acts on $W_i$ as $\zeta_n^{i}$. Thus the $\phi_n$ fixed subspace of the tangent space to $p_\bl$ consists exactly of those tangent vectors from Proposition \ref{prop:char} such that \eqref{eq:is-l-hook} holds. Thus we need only show that ${\rm T}_{p_\bl} (X^{\bz/n}) = ({\rm T}_{p_\bl} X)^{\bz/n}$, where $X = \mM_{\widetilde{Q}}(v,W)$. The inclusion $\subseteq$ is clear. For the other direction, we may find an analytic neighborhood around $p_\bl$ so that the action of $\bz/n$ is linear. Given $\gamma \in ({\rm T}_{p_{\bl}} X)^{\bz/n}$, choose a 1-parameter family $\gamma(t)$ whose derivative is $\gamma$. Then $n^{-1} \sum_{g \in \bz/n} g \cdot \gamma(t)$ lies in $X^{\bz/n}$ and its derivative is $\gamma$.
\end{proof}

\section{Resulting combinatorial realizations of $\asl_n$ crystal} \label{s:asln-results}

In this section we precisely describe some combinatorial models that arise from our construction in type $\asl_n$. Proofs are delayed until the next section. 

\subsection{The image of $M_\iota$} \label{sec:imageMiota}

Consider the $T$ action on $\mL(\bfv, W)$ from \S\ref{ss:torus-combinatorics}. Given an integral slope datum $\bx$, define a map 
\begin{equation} \label{eqn:zdefn}
\begin{split}
\iota_\bx \colon \bc^* & \rightarrow T \\ z & \mapsto (z^{\xi_{\Omega}}, z^{\xi_{\overline \Omega}}, z^{\xi_1}, \ldots, z^{\xi_\ell}),
\end{split}
\end{equation}
where we use the coordinates for $T$ from \S\ref{sec:QLQ}. We denote by $T_\bx$ the 1-torus $\bc^*$ along with the action on $\mM(\bfv, W)$ induced by the map $\iota_\bx$.

For a general slope datum $\bx$, one can choose a collection of integral slope data $(\bx^{(N)})_{N \in \bn}$ with the property that, for any multi-partition ${\bm \lambda}$ and any boxes $b,b' \in {\bm \lambda}$, $h^{\bx}(b) > h^{\bx}(b')$ if and only if, for all sufficiently large $N$, one has $h^{\bx^{(N)}}(b) > h^{\bx^{(N)}}(b')$. By looking at local coordinates in $\mathfrak{M}(\bfv, W)$ near each $p_\bl$ (see \S\ref{ss:coordinates}), it is clear that the family $\iota_{\bx^{(N)}}$ satisfies properties {\rm (F1)}, {\rm (F2)}, {\rm (F3)} from \S\ref{sec:framework}, and thus define an injective map 
\[
M_\bx \colon \coprod_\bfv \Irr \mathfrak{L}(\bfv,W) \rightarrow \cP^\ell.
\]
Furthermore, $M_\bx$ only depends on $\bx$, not the choice of the family $\bx^{(N)}$. As in \S\ref{ss:torus-combinatorics}, the multi-partitions are in fact colored by a function $p$ coming from the $\bz/n$-grading on $W$. 

\begin{Theorem} \label{thm:to-partitions}
Fix a general slope datum $\bx$. The image of $M_\bx$ is contained in the set of $\bx$-regular multi-partitions. If $\bx$ is aligned, then $\im M_\bx$ consists of exactly the $\bx$-regular multi-partitions of type prescribed by $p$.
\end{Theorem}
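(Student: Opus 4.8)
The statement has two parts, and I would treat them separately. The map $M_\bx$ sends an irreducible component $Z \in \Irr \mathfrak{L}(\bfv,W)$ to the fixed point $p_\bl$ to which a generic point of $Z$ flows under $T_\bx$ as $z \to \infty$ (through the stabilizing family $\iota_{\bx^{(N)}}$). The natural strategy is to study everything in the local coordinates $\mE_\bl$ from \S\ref{ss:coordinates} and the tangent space character from Proposition~\ref{prop:char} together with Lemma~\ref{lemma:tangentfixedpts}. My first move would be to translate the weight of $T_\bx$ on each tangent coordinate at $p_\bl$ into the language of the height function $h^\bx$. Concretely, the tangent directions near $p_\bl$ are indexed by triples $(b,k,k')$ with $b\in\lambda(k)$ satisfying the hook condition \eqref{eq:is-l-hook}, and from Proposition~\ref{prop:char} the $T$-weight of such a direction is (up to the overall $t_s$-normalization) governed by the quantity $\xi_j - \xi_i + \xi_\Omega \leg_{\lambda(j)}(b) - \xi_{\ol\Omega}\arm_{\lambda(i)}(b)$ appearing in Definition~\ref{def:multiillegal}(iii). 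Thus the $\bx$-illegal triples are precisely the tangent directions whose $T_\bx$-weight has a prescribed sign — I expect ``illegal'' to mean the direction is \emph{attracting} (positive weight), so that $p_\bl$ attracts from such a direction and therefore cannot be the limit of a generic point of a \emph{top-dimensional} component sitting in $\mathfrak{L}$.

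\textbf{Part one: the image is $\bx$-regular.} To show $\im M_\bx$ lands in the $\bx$-regular multi-partitions, I would argue contrapositively: if $\bl$ is \emph{not} $\bx$-regular, then $p_\bl$ cannot be $M_\bx(Z)$ for any $Z$. The key dimension count is the one already used in Proposition~\ref{prop:1-1}: the attracting set $A_C$ of a fixed-point component is isotropic for the symplectic form (since $T_\iota$ acts with positive weight on the form), so $\dim A_{p_\bl} \le \frac12 \dim \mathfrak{M}(\bfv,W) = \dim \mathfrak{L}(\bfv,W)$, with equality needed for $p_\bl$ to carry a generic point of a full-dimensional component of $\mathfrak{L}$. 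The dimension of $A_{p_\bl}$ equals the number of attracting tangent directions at $p_\bl$, i.e.\ the number of hook-condition tangent vectors of positive $T_\bx$-weight. An $\bx$-illegal triple witnesses an attracting direction that does \emph{not} lie inside $\mathfrak{L}$ (it points out of the Lagrangian, along $s \ne 0$ or a non-nilpotent direction), forcing $\dim(A_{p_\bl}\cap \mathfrak{L})$ to drop strictly below the Lagrangian dimension; hence no component of $\mathfrak{L}$ can flow generically to $p_\bl$. Making this precise requires carefully matching the coordinate labels of $\mE_\bl$ in \S\ref{ss:coordinates} to the tangent basis of Proposition~\ref{prop:char} and tracking which directions survive the nilpotency and $s=0$ constraints defining $\mathfrak{L}$.

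\textbf{Part two: aligned data hits every $\bx$-regular multi-partition.} For the reverse inclusion under the aligned hypothesis, I would construct, for each $\bx$-regular $\bl$ of the correct type, an irreducible component $Z$ of $\mathfrak{L}(\bfv,W)$ with $M_\bx(Z) = p_\bl$. The natural candidate is $Z = \ol{A_{p_\bl} \cap \mathfrak{L}(\bfv,W)}$ (the closure of the locus flowing into $p_\bl$, intersected with the Lagrangian). I would show: (a) $A_{p_\bl}\cap \mathfrak{L}$ is irreducible of dimension exactly $\dim \mathfrak{L}(\bfv,W)$ when $\bl$ is $\bx$-regular; (b) its closure is therefore a full irreducible component; and (c) $M_\bx$ sends it back to $p_\bl$. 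Step (a) is the place where $\bx$-regularity and alignment do the work: $\bx$-regularity guarantees that \emph{every} attracting direction that is forbidden by the symplectic/isotropy bound is in fact excluded, so the attracting directions inside $\mathfrak{L}$ achieve the half-dimensional maximum; alignment, via the condition $|\xi_i-\xi_j|<\xi_\Omega+\xi_{\ol\Omega}$, is precisely what is needed so that the weight inequalities in Definition~\ref{def:multiillegal}(iii) are the only constraints and no ``accidental'' extra attracting directions appear from large gaps in the $\xi_k$. I would verify the type condition is automatic from $c(\bl)=\bfv$ and the coloring $p$.

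\textbf{Main obstacle.} The crux — and the step I expect to be hardest — is the exact dimension bookkeeping in part two, step (a): proving that for $\bx$-regular, aligned $\bl$ the number of attracting tangent directions lying \emph{inside} $\mathfrak{L}$ (equivalently, inside the isotropic half of the tangent space that is tangent to the nilpotent, $s=0$ locus) equals precisely $\frac12 \dim \mathfrak{M}$. This amounts to a weight-by-weight pairing argument: the symplectic form pairs each tangent direction $(b,k,k')$ with its ``opposite'' $(b,k',k)$, and one must show that aligned $\bx$-regularity ensures exactly one of each such pair is attracting and lies in $\mathfrak{L}$, with the illegal triples accounting for the only possible failures. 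Handling the boundary cases of the strict inequalities in Definition~\ref{def:multiillegal}(iii), and confirming that the attracting locus is not merely half-dimensional but genuinely irreducible (so that its closure is a single component rather than several), is where the real care will be required, and is likely where alignment is used most essentially.
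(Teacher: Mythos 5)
Your overall architecture is the paper's: the Bia\l ynicki--Birula decomposition plus the isotropy bound from Proposition~\ref{prop:1-1} for the forward inclusion, and ``the closure of the attracting set is an irreducible component'' for the reverse inclusion under alignment. But the mechanism you give for part one is wrong, and it is not a removable imprecision. You assert that an $\bx$-illegal triple ``witnesses an attracting direction that does not lie inside $\mathfrak{L}$'' and then try to conclude that $\dim(A_{p_\bl}\cap\mL(\bfv,W))$ drops. What an illegal triple actually records (this is the paper's Lemma~\ref{lem:anl}) is that \emph{both} members of the symplectically paired couple of tangent weights attached to the hook triple $(b,k,k')$ from Lemma~\ref{lemma:tangentfixedpts} --- namely $-\xi_{k'}+\xi_k-\xi_\Omega\leg_{\lambda(k')}(b)+\xi_\oOmega(\arm_{\lambda(k)}(b)+1)$ and $\xi_{k'}-\xi_k+\xi_\Omega(\leg_{\lambda(k')}(b)+1)-\xi_\oOmega\arm_{\lambda(k)}(b)$, which sum to $\xi_\Omega+\xi_\oOmega>0$ --- are \emph{positive}. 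Since the attracting set for the limit $t\to\infty$ is spanned by the \emph{negative}-weight directions, an illegal triple deletes an entire pair from $A_{p_\bl}$; it does not contribute a stray attracting direction transverse to $\mathfrak{L}$. (Your convention ``attracting $=$ positive weight'' would make an illegal triple \emph{increase} $\dim A_{p_\bl}$ past $\tfrac12\dim\mM(\bfv,W)$, contradicting the isotropy bound you invoke --- a symptom of the sign confusion.) The correct conclusion is that a non-regular $\bl$ has $\dim A_{p_\bl}<\tfrac12\dim\mM(\bfv,W)=\dim\mL(\bfv,W)$, and since any $Z$ with $M_\bx(Z)=\bl$ must have $Z\cap A_{p_\bl}$ dense in $Z$, no such $Z$ exists. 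This argument never needs to decide which tangent directions are tangent to $\mathfrak{L}$ and needs no alignment, consistent with the first sentence of the theorem; your route, which requires the illegal direction to be an ``$s\neq 0$ or non-nilpotent'' direction (a claim that is not true in general), would not deliver that.

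The step you flag as the main obstacle is in fact the cheap part. Because the $T$-fixed points are isolated, Bia\l ynicki--Birula already gives that $A_{p_\bl}$ is an affine space, so irreducibility is free, and the ``weight-by-weight pairing'' is exactly the two eigenvalues displayed above summing to the weight of the symplectic form. The genuine content of alignment is not ruling out ``accidental extra attracting directions'' but Lemma~\ref{lem:in-L}: the \emph{entire} attracting set of $p_\bl$ lies in $\mL(\bfv,W)$, because $\xi_\Omega,\xi_\oOmega>0$ forces nilpotency of $x$ in the limit, and if $s\neq 0$ then stability produces a path with $s\circ\pi\circ t\neq 0$ whose weight is positive precisely because $|\xi_i-\xi_j|<\xi_\Omega+\xi_\oOmega$, so it blows up as $z\to\infty$. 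With that, for every $\bx$-regular $\bl$ the closure $\ol{A_{p_\bl}}$ is half-dimensional, irreducible, and contained in the equidimensional $\mL(\bfv,W)$, hence is a component mapping back to $\bl$; no intersection $A_{p_\bl}\cap\mathfrak{L}$ ever needs to be analyzed.
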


\subsection{Combinatorial description of the crystal structure on multi-partitions} \label{ss:comb-cryst}

For each integral highest weight $\Lambda$ and each general aligned slope datum $\bx$, Theorem \ref{thm:to-partitions} gives a bijection between $\coprod_\bfv \Irr \mathfrak{L}(\bfv,W)$ and the set of $\bx$-regular multi-partitions. Transporting the crystal structure from \S\ref{ss:quiver-crystal} gives a realization of the crystal $B(\Lambda)$ where the underlying set is the $\bx$-regular multi-partitions. We now give a purely combinatorial description of the crystal operators on $\bx$-regular multi-partitions.

Fix $\Lambda$ and a general slope datum $\bx$. For each multi-partition $\bl$, construct a string of brackets $S^\bx_{\bar \imath}(\bl)$ by placing a ``$($'' for every $b \in A_{\bar \imath}(\bl)$ and a ``$)$'' for every $b \in R_{\bar \imath}(\bl)$, in decreasing order of $h^\bx(b)$ from left to right. Cancel brackets by recursively applying the rule that adjacent matching parentheses $()$ get removed. Define operators $e^\bx_{\ii}, f^\bx_{\ii} \colon \cP^\ell \rightarrow \cP^\ell \cup \{ 0 \}$ . 
\begin{equation*} \label{Hcryst}
\begin{aligned}
e_{\ii}^\bx (\bl) &=
\begin{cases}
\lambda \setminus b & \text{if the first uncanceled ``$)$'' from the right in $S^\bx_{\bar \imath}(\bl)$ corresponds to $b$} \\
0 & \text{if there is no uncanceled ``$)$'' in $S^\bx_{\bar \imath}(\bl)$},
\end{cases} \\
f_{\ii}^\bx (\bl) &=
\begin{cases}
\lambda \cup b & \text{if the first uncanceled ``$($'' from the left in $S^\bx_{\bar \imath}(\bl)$ corresponds to $b$} \\
0 & \text{if there is no uncanceled ``$($'' in $S^\bx_{\bar \imath}(\bl)$}.
\end{cases}
\end{aligned}
\end{equation*}

\begin{Definition}
Let $B^\bx \subset \cP^\ell$ be the set of multi-partitions which can be obtained from the empty multi-partition $(\varnothing, \dots, \varnothing)$ by applying a sequence of operators $f_\ii^\bx$ for various $\ii$. 
\end{Definition}

\begin{Theorem} \label{thm:irrationalcrystal} 
Fix a general aligned slope datum $\bx$. With the notation above, $B^\bx = \im M_\bx$ (which by Theorem~\ref{thm:to-partitions} is the set of $\bx$-regular multi-partitions). The operators $e_\ii^\bx$ and $f_\ii^\bx$ are exactly the crystal operators inherited from the crystal structure on $\coprod_\bfv \Irr \mL(\bfv, W)$.
\end{Theorem}

\begin{Remark} 
One can easily see that the set of $\bx$-regular multi-partitions depends on $\bx$, so we do see many different combinatorial realizations of $B(\Lambda)$. In fact, one always obtains an uncountable family of realizations, since for any irrational number $1/(n-1) < z < n-1$ one can find a general  aligned slope datum $\bx$ with $\xi_\Omega/\xi_{\oOmega}=z$, and one can easily argue that these all lead to different realizations. 
\end{Remark}

\begin{Remark} \label{rem:Fayers1} 
When $\bl=\lambda$ is a single partition (i.e., in level $1$), the combinatorial operators defined above appeared in Fayers' recent work \cite{Fayers:2009}. To describe the exact relationship, set $y = \frac{n \xi_\Omega}{\xi_\Omega+ \xi_{\oOmega}}$ and consider the arm sequence $A_y$ from \cite[Lemma 7.4]{Fayers:2009}. Our notion of $\bx$-regular then agrees with Fayers' notion of $A_y$-regular, and our crystal operators agree with Fayers' crystal operators. 
\end{Remark}

\begin{Remark} \label{rem66}
The operators $e_\ii^\bx$ and $f_\ii^\bx$ are well-defined on any multi-partition, but they do not define the structure of an $\asl_n$-crystal on the set of all multi-partitions (this was noted in \cite[\S7]{Fayers:2009} in the level 1 case).
\end{Remark}

\subsection{Rational slopes}  \label{ss:rational}

There are other families $(\bx^{(N)})$ which satisfy (F1), (F2), (F3) from \S\ref{sec:framework}, and thus define an embedding $M_{\bx} \colon \coprod_\bfv \Irr \mathfrak{L}(\bfv,W) \rightarrow \cP^\ell$. For example, consider the following. Define two total orders on boxes as follows (set $b = (k_1;i_1,j_1)$ and $b' = (k_2;i_2,j_2)$):
\begin{enumerate}
\item $b \succ_\bx b'$ if and only if
\begin{itemize}
\item $h^{\bx}(b) > h^{\bx}(b')$ or
\item  $h^{\bx}(b) = h^{\bx}(b')$ and $i_1>i_2$ or 
\item  $h^{\bx}(b) = h^{\bx}(b')$ and $i_1=i_2$ and $k_1 > k_2,$
\end{itemize}
\item $b \succ'_\bx b'$ if and only if
\begin{itemize} 
\item $h^{\bx}(b) > h^{\bx}(b')$ or
\item $h^{\bx}(b) = h^{\bx}(b')$ and $i_1<i_2$ or 
\item $h^{\bx}(b) = h^{\bx}(b')$ and $i_1=i_2$ and $k_1 < k_2.$
\end{itemize}
\end{enumerate}
One can find families $\bx^{(N)}$ which satisfy either
\begin{enumerate}
\item $h^{\bx^{(N)}}(b) > h^{\bx^{(N)}}(b')$ for all sufficiently large $N$ if and only if $b \succ_\bx b'$ or
\item $h^{\bx^{(N)}}(b) > h^{\bx^{(N)}}(b')$ for all sufficiently large $N$ if and only if $b \succ'_\bx b'$.
\end{enumerate}
These lead to different maps $M \colon \coprod_\bfv \Irr \mathfrak{L}(\bfv,W) \rightarrow \cP^\ell$, and hence different combinatorial realizations of $B(\Lambda)$. When $\Lambda=\Lambda_0$ these two realization are the crystal structures corresponding to the two arm sequences associated to $y = \frac{n \xi_\Omega}{\xi_\Omega+ \xi_{\oOmega}}$ in \cite[Lemma 7.4]{Fayers:2009}. Since we will need it later on, we explicitly describe the combinatorial crystal structure for the order $\succ_\bx$.

\begin{Corollary} \label{cor:rat-crystal}
Define operators $\tilde e_i, \tilde f_i$ on $\cP^\ell$ as in \S\ref{ss:comb-cryst}, but ordering boxes according to $\succ_\bx$. Then the subset $B^\bx$ of $\cP^\ell$ which can be obtained from the empty multi-partition by applying operators $\tilde e_i$ and $\tilde f_i$ is a copy of $B(\Lambda)$.
\end{Corollary}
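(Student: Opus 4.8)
The plan is to produce the order $\succ_\bx$ as the limiting height order of an explicit family of \emph{general} integral slope data and then to run the framework of \S\ref{sec:framework} verbatim, exactly as in the proof of Theorem~\ref{thm:irrationalcrystal}; the only genuinely new input is the identification of the combinatorial operators attached to $\succ_\bx$ with the transported crystal operators. Clearing denominators, I may assume $\xi_\Omega, \xi_{\oOmega}, \xi_1, \dots, \xi_\ell \in \bz_{>0}$. Define the family $\bx^{(N)} = (\xi^{(N)}_\Omega, \xi^{(N)}_{\oOmega}, \xi^{(N)}_1, \dots, \xi^{(N)}_\ell)$ by
\[
\xi^{(N)}_\Omega = N^2 \xi_\Omega + N, \qquad \xi^{(N)}_{\oOmega} = N^2 \xi_{\oOmega}, \qquad \xi^{(N)}_k = N^2 \xi_k + k .
\]
These are positive integers, and $h^{\bx^{(N)}}(b) = N^2 h^\bx(b) + N i_b + k_b$, so for any two distinct boxes $b, b'$ the difference $h^{\bx^{(N)}}(b) - h^{\bx^{(N)}}(b')$ has, for all large $N$, the sign of the lexicographic comparison of $(h^\bx, i, k)$; that is, $h^{\bx^{(N)}}(b) > h^{\bx^{(N)}}(b')$ if and only if $b \succ_\bx b'$. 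In particular each $\bx^{(N)}$ is general for large $N$ (two boxes with equal height, equal $i$, and equal $k$ must coincide), so this is a family of the type promised in \S\ref{ss:rational}.

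I would next verify (F1)--(F3). Since the symplectic form scales by $t_\Omega t_{\oOmega}$ under the $T$-action of \S\ref{ss:torus-combinatorics}, we have $\wt T_{\iota_{\bx^{(N)}}} = \xi^{(N)}_\Omega + \xi^{(N)}_{\oOmega} > 0$, which is (F3). For (F1) and (F2) I would examine the local charts $\mE_\bl$ of \S\ref{ss:coordinates}: the weight of $T_{\iota_{\bx^{(N)}}}$ on each coordinate of $\mE_\bl$ is an explicit integer linear form in the entries of $\bx^{(N)}$ (read off from Proposition~\ref{prop:char} and Lemma~\ref{lemma:tangentfixedpts}), and a direct substitution of the $N^2, N, 1$ scaling shows that for all large $N$ these weights are nonzero, with signs stabilizing to agree with $\succ_\bx$. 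Nonvanishing makes each $p_\bl$ an isolated $T_{\iota^{(N)}}$-fixed point and forces $F_{\iota^{(N)}}(\bfv, W) = \mF(\bfv, W)$, the finite set indexed by multi-partitions via Proposition~\ref{prop:fixed-points}, giving (F1); stabilization of the signs fixes the flow direction, and hence the map $M_{\iota^{(N)}}$, giving (F2). This is the same ``clear from local coordinates'' reasoning used for general slope data in \S\ref{sec:imageMiota}, now applied to the $\succ_\bx$-family.

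With (F1)--(F3) established, the framework yields an injective map $M_\bx \colon \coprod_\bfv \Irr \mL(\bfv, W) \to \cP^\ell$, and since $\coprod_\bfv \Irr \mL(\bfv, W)$ realizes $B(\Lambda)$ by \S\ref{ss:quiver-crystal}, its image is a faithful copy of $B(\Lambda)$ carrying the transported operators $M_\bx e_i M_\bx^{-1}$ and $M_\bx f_i M_\bx^{-1}$. It remains to show these agree with $\tilde e_i, \tilde f_i$. This is the same local computation that proves Theorem~\ref{thm:irrationalcrystal}: the parenthesis cancellation of addable and removable $\ii$-nodes records precisely which fixed point a generic point of $f_i Z$ (resp.\ $e_i Z$) flows to under $T_{\iota^{(N)}}$, where the left-to-right ordering of the nodes in the string is by decreasing $h^{\bx^{(N)}}$, which for this family is $\succ_\bx$. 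Note that alignment of $\bx$ is not needed here: it entered Theorem~\ref{thm:to-partitions} only to identify the image with the set of $\bx$-regular multi-partitions, and the present statement makes no such claim about the image.

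Finally, the highest weight vertex of $B(\Lambda)$ is the unique component of $\mL(0, W)$, which $M_\bx$ sends to the empty multi-partition. As $B(\Lambda)$ is connected and generated from its highest weight vertex by the lowering operators, and the transported lowering operators are the $\tilde f_i$, the reachable set $B^\bx$ coincides with $\im M_\bx$ and is hence a copy of $B(\Lambda)$. The main obstacle is the operator identification of the third paragraph: establishing that the purely combinatorial cancellation with respect to $\succ_\bx$ faithfully reproduces the geometric flow of the irreducible components to the torus fixed points. Everything else is a routine perturbation-and-genericity argument, and the operator identification is imported wholesale from the proof of Theorem~\ref{thm:irrationalcrystal}, with $\succ_\bx$ in place of a strict height order.
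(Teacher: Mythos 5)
Your construction of the family $\bx^{(N)}$ realizing $\succ_\bx$ as a limiting height order is fine, and so is the deduction that $M_\bx$ is an injection whose image carries a copy of $B(\Lambda)$. But the paper's proof is a one-line \emph{combinatorial} limit of the \emph{statement} of Theorem~\ref{thm:irrationalcrystal}: approximate $\succ_\bx$ by the height orders of genuinely general, aligned slope data; on any fixed multi-partition the strings of brackets, and hence the operators $e_\ii^{\bx'}, f_\ii^{\bx'}$, stabilize to $\tilde e_i, \tilde f_i$; since each approximating datum already generates a copy of $B(\Lambda)$ from the empty multi-partition, so does $\succ_\bx$. Your route instead re-runs the geometry for the family realizing $\succ_\bx$ and then asserts that the operator identification is ``imported wholesale'' from the \emph{proof} of Theorem~\ref{thm:irrationalcrystal}. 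That is exactly where your argument is incomplete: that proof is written for a strict height function $h^\bx$ (the subspaces $V^{\geq H}$ and $V^{>H}$, Lemmas~\ref{lem:ar-illegal}, \ref{lem:illegal-squares}, \ref{lem:new}, and Proposition~\ref{prop:flatfamily} all compare heights), and for a non-general $\bx$ with ties every one of these statements has to be restated and rechecked with respect to the refined order $\succ_\bx$. You correctly identify this as ``the main obstacle'' and then do not address it; the paper's limit argument sidesteps it entirely by never invoking the geometric proof for a tied datum.

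The second, more serious error is the claim that alignment of $\bx$ is not needed because it ``entered Theorem~\ref{thm:to-partitions} only to identify the image.'' Alignment is a hypothesis of Theorem~\ref{thm:irrationalcrystal} itself and is used repeatedly in the operator identification: Lemma~\ref{lem:in-L} (which needs alignment) is what identifies $Z$ locally with the attracting set of $p_\bl$ and hence yields the defining equations \eqref{eq:im} on which Lemma~\ref{lem:new} rests; Proposition~\ref{prop:flatfamily} uses alignment to rule out the case $i_a \le i_r$, $j_a \le j_r$; and the inequality $H > \max_k\{\xi_k\}$ needed to apply Lemma~\ref{lem:new} is deduced from alignment. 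Moreover, Theorem~\ref{thm:to-partitions} (whose conclusion that $\bl$ is $\bx$-regular is invoked in the proof of Theorem~\ref{thm:irrationalcrystal} via Lemma~\ref{lem:ar-illegal}) only gives surjectivity onto the $\bx$-regular multi-partitions under alignment. Dropping alignment is not a harmless generalization --- the paper's own Question section points out that for non-aligned $\bx$ the image of $M_\bx$ can be a proper subset of the $\bx$-regular multi-partitions and that the combinatorial description is not understood there. You should restrict to aligned $\bx$ (as the application in \S\ref{sec:monomial} does) and either carry out the adaptation of the proof of Theorem~\ref{thm:irrationalcrystal} to the order $\succ_\bx$ in detail, or replace the third paragraph by the paper's stabilization argument.
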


\begin{proof}
Follows immediately from Theorem \ref{thm:irrationalcrystal} by taking a limit. 
\end{proof}

\section{Proofs of results from Section \ref{s:asln-results}} \label{sec:proofs}

\subsection{Proof of Theorem~\ref{thm:to-partitions}} \label{sec:proof:to-partitions}

Fix a general slope datum $\bx$ and a sequence $\bx^{(k)}$ of integral slope data converging to $\bx$. Fix a colored multi-partition $\bl$ with $c(\bl)=\bfv$, and choose $N$ large enough so that for all pairs of boxes $b, b' \in \bl$, $h^{\bx^{(N)}}(b) > h^{\bx^{(N)}}(b')$ if and only if $h^{\bx}(b) > h^{\bx}(b')$. Fix a $\bz/n$-graded vector space $W$, where $\dim W_\ii= |\{ k : p(k) = \ii \}|$. 

\begin{Lemma} \label{lem:anl}
The dimension of the attracting set of $p_\bl$ under the action of $T_{\bx^{(N)}}$ on $\mM(\bfv, W)$ is $\dim \mM(\bfv, W)/2$ if and only if $\bl$ is $\bx$-regular. 
\end{Lemma}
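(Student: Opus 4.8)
The plan is to reduce the statement to a weight count on the tangent space $T_{p_\bl}\mM(\bfv,W)$ and match that count against Definition~\ref{def:multiillegal}. Since the fixed points of $\cT$ in the $\asl_n$ case are isolated, the component here is the single point $\{p_\bl\}$, and by \cite[Theorem 4.1]{BB} its attracting set $\{x \mid \lim_{t\to\infty} t\cdot x = p_\bl\}$ is an affine cell whose dimension equals the number of tangent directions on which $T_{\bx^{(N)}}$ acts with \emph{negative} weight (these are exactly the directions flowing into $p_\bl$ as $t\to\infty$). So the first step is to read off these weights from Proposition~\ref{prop:char} and Lemma~\ref{lemma:tangentfixedpts}.

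By Lemma~\ref{lemma:tangentfixedpts}, a basis of $T_{p_\bl}\mM(\bfv,W)$ is indexed by the triples $(b,k,k')$ with $b\in\lambda(k)$ satisfying \eqref{eq:is-l-hook}, each contributing the two monomials of Proposition~\ref{prop:char}. A short computation shows that the two monomials of a triple satisfy \eqref{eq:is-l-hook} simultaneously (their $\phi_n$-exponents are negatives of one another), so the admissible basis vectors come in pairs; in particular the number of such triples is exactly $\dim\mM(\bfv,W)/2$. Applying $\iota_{\bx^{(N)}}$, the two weights of a pair work out (at $\bx^{(N)}$) to
\[
w_A = \xi_{\ol\Omega} - D, \qquad w_B = \xi_\Omega + D, \qquad D := \xi_{k'} - \xi_k + \xi_\Omega \leg_{\lambda(k')}(b) - \xi_{\ol\Omega}\arm_{\lambda(k)}(b).
\]
The key structural fact is $w_A + w_B = \xi_\Omega + \xi_{\ol\Omega} > 0$, so within each pair at most one weight is negative. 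Hence every pair contributes $0$ or $1$ to the dimension of the attracting set, whence that dimension is $\dim\mM(\bfv,W)/2$ if and only if each pair contributes exactly $1$, i.e.\ if and only if no pair has \emph{both} weights positive.

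It then remains to identify the pairs with both weights positive. From the displayed formulas, $w_A>0$ and $w_B>0$ is exactly $-\xi_\Omega < D < \xi_{\ol\Omega}$, which, together with $b\in\lambda(k)$ and \eqref{eq:is-l-hook}, is precisely the condition that $(b,k,k')$ be $\bx$-illegal in the sense of Definition~\ref{def:multiillegal} (with $i=k$, $j=k'$). Thus $\bl$ fails the dimension equality if and only if it carries an $\bx$-illegal triple, i.e.\ if and only if it is not $\bx$-regular.

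The main obstacle is passing between the weights of the integral datum $\bx^{(N)}$ and the strict inequalities of Definition~\ref{def:multiillegal}, which are phrased in terms of $\bx$. I will handle this by rewriting the two weights as differences of box heights: unwinding the definitions gives $w_A = h^\bx(b^+) - h^\bx(b_{\mathrm{leg}})$ and $w_B = h^\bx(c^+) - h^\bx(b_{\mathrm{arm}})$, where $b_{\mathrm{arm}}$ and $b_{\mathrm{leg}}$ are the arm- and leg-end boxes of $b$ relative to $\lambda(k)$ and $\lambda(k')$, and $b^+, c^+$ are the addable nodes one step beyond them. Each relevant pair of boxes is genuinely distinct (they differ in partition label, or else in a coordinate by at least $1$), so generality of $\bx$---injectivity of $h^\bx$ on the finitely many boxes that occur---forces $w_A, w_B \ne 0$ when evaluated at $\bx$. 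Since $\bx^{(N)} \to \bx$, the signs of $w_A$ and $w_B$ at $\bx^{(N)}$ therefore agree with their signs at $\bx$ for all large $N$, so the weight count is governed by the strict inequalities defining $\bx$-illegality, completing the equivalence.
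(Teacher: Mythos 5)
Your argument is correct and is essentially the paper's own proof: both use the Bia\l ynicki-Birula decomposition to equate the dimension of the attracting cell with the number of negative weights on ${\rm T}_{p_\bl}\mM(\bfv,W)$, pair the weights via Proposition~\ref{prop:char} and Lemma~\ref{lemma:tangentfixedpts} so that each pair sums to $\xi_\Omega+\xi_{\ol\Omega}>0$, and identify ``both weights positive'' with the $\bx$-illegality of the corresponding triple. Your closing paragraph, interpreting the two weights as height differences of distinct boxes to rule out zero weights and to pass from $\bx^{(N)}$ to the strict inequalities at $\bx$, is a small refinement the paper leaves implicit, not a different approach.
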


\begin{proof}
Using the coordinates from \S\ref{ss:coordinates}, the fixed points of $T_{\bx^{(N)}}$ acting on $\mM(\bfv, W)$ are exactly the fixed points of $T$, and in particular they are isolated.

We will use the Bia\l ynicki-Birula decomposition \cite[\S 4]{BB} applied to $\mM(\bfv,W)$. This says that the set of points flowing to a given $T_{\bx^{(N)}}$-fixed point $p_\bl$ is an affine space whose dimension is the number of negative eigenvalues of $T_{\bx^{(N)}}$ acting on the tangent space of $p_\bl$. By Lemma~\ref{lemma:tangentfixedpts}, the weights of $T_{\bx^{(N)}}$ acting on ${\rm T}_{p_\bl} \mM(\bfv,W)$ come in pairs that are indexed by triples $(b, k, k')$ with $b \in \lambda(k)$ such that $p(k) - p(k') + \arm_{\lambda(k)}(b)+\leg_{\lambda(k')}(b) + 1 \equiv 0 \pmod{n}$. The two eigenvalues given by such a triple are
\begin{align*}
-\xi_{k'}+\xi_k - \xi_\Omega \leg_{\lambda(k')}(b) + \xi_\oOmega (\arm_{\lambda(k)}(b) + 1), \\
\xi_{k'} -\xi_k + \xi_\Omega (\leg_{\lambda(k')}(b) + 1) - \xi_\oOmega \arm_{\lambda(k)}(b).
\end{align*}
The sum of these two eigenvalues is $\xi_\Omega+\xi_{\overline \Omega} > 0$, so at least one is positive, and both of them are positive if and only if $(b, k, k')$ is $\bx$-illegal. 
\end{proof}
  
\begin{Lemma} \label{lem:in-L}
If $\bx$ is aligned, then the attracting set of $p_\bl$ under the action of $T_{\bx^{(N)}}$ is contained in $\mL(\bfv, W)$.
\end{Lemma}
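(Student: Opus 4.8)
The plan is to show that the attracting set $A_{p_\bl}$ of $p_\bl$ under $T_{\bx^{(N)}}$ lies inside $\mL(\bfv,W)$ by verifying the two defining conditions of $\mL(\bfv,W)$ on every point of $A_{p_\bl}$: namely that $s=0$ and that $x$ is nilpotent. The natural strategy is to use the height function $h^\bx$ (Definition~\ref{defn:height}) to build a decreasing filtration of $V$ by the subspaces $V^{\geq H}$ and $V^{>H}$ from \eqref{VH}, and to argue that for points in $A_{p_\bl}$ all the structure maps are \emph{strictly raising} with respect to this filtration. The key input is the sign computation of Lemma~\ref{lem:anl}: the weights of $T_{\bx^{(N)}}$ on the local coordinates $e_{b \to b'}$ and $e_{b'' \to r}$ of $\mE_\bl$ are exactly the eigenvalues listed there, and a coordinate can be nonzero along the attracting set only when its weight is negative (so that the corresponding direction flows \emph{into} $p_\bl$ as $t \to \infty$).

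First I would translate the two eigenvalue expressions from Lemma~\ref{lem:anl} into statements about heights. A coordinate $e_{b \to b'}$ records a matrix element of one of the maps ${}_{\ii\pm\bar 1}x_\ii$ between the basis vectors $b_{k;i,j}$ and $b_{k';i',j'}$; its weight, being one of the two eigenvalues, can be rewritten in terms of the height difference $h^\bx(b') - h^\bx(b)$ up to the contributions $\xi_\Omega$ or $\xi_{\oOmega}$ coming from the $i$- and $j$-shifts between a box and its neighbor in the relevant direction. The role of the \textbf{aligned} hypothesis $|\xi_i - \xi_j| < \xi_\Omega + \xi_{\oOmega}$ is precisely to pin down these signs: it guarantees that whenever the attracting-set weight is negative, the corresponding map raises the height, i.e.\ sends $V^{\geq H}$ into $V^{>H}$ (or, for the $s$-coordinates $e_{b'' \to r}$, that their weights force $s=0$ along $A_{p_\bl}$). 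The second half of the argument is then formal: since there are finitely many boxes and the height function takes finitely many values on them, a map that strictly raises height on a finite-dimensional graded space is automatically nilpotent, so $x$ is nilpotent; together with $s=0$ this places every point of $A_{p_\bl}$ in $\mL(\bfv,W)$.

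Concretely, the steps in order are: (i) recall from \S\ref{ss:coordinates} which coordinates of $\mE_\bl$ are nonzero away from $p_\bl$ and match them to the eigenvalue pairs of Lemma~\ref{lem:anl}; (ii) observe that along $A_{p_\bl}$ only the coordinates with negative $T_{\bx^{(N)}}$-weight survive; (iii) use the \emph{aligned} condition to show each surviving $x$-coordinate strictly increases $h^\bx$ and each surviving $s$-coordinate is forced to vanish; (iv) conclude nilpotency of $x$ via the height filtration and $s=0$ directly, so the point lies in $\mL(\bfv,W)$.

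I expect the main obstacle to be step (iii): carefully bookkeeping the $\pm\xi_\Omega$ / $\pm\xi_{\oOmega}$ shifts so that the sign of each eigenvalue is correctly correlated with the direction of the height change, and checking that the aligned inequality is exactly strong enough to rule out the ``wrong-sign'' borderline cases. The other delicate point is handling the $s$-maps: one must confirm that the weight of each coordinate $e_{b'' \to r}$ is positive (hence that coordinate cannot be switched on along the attracting set), which again relies on positivity of $\xi_\Omega, \xi_{\oOmega}$ together with the aligned bound; without alignment one could imagine a nonzero $s$-direction flowing into $p_\bl$, which is exactly why the hypothesis is needed here and not in Lemma~\ref{lem:anl}.
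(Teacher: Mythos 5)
Your argument is correct in substance, but it is a genuinely different proof from the one in the paper. The paper's proof is coordinate-free and global: to see that $x$ is nilpotent it observes that eigenvalues of cycles in $Q$ are $\GL(V)$-invariant functions that scale by $z^{a\xi_\Omega+b\xi_\oOmega}$ with $a+b\ge 1$, hence would diverge along a convergent flow; and to see that $s=0$ it uses stability to produce a nonzero invariant $s\circ x_\pi\circ t\colon W\to W$ whose entries scale by $z^{(1+a)\xi_\Omega+(1+b)\xi_\oOmega+\xi_{r}-\xi_{r'}}$, the exponent being positive precisely by the aligned condition. You instead work in the Bia\l ynicki-Birula picture inside the chart $\mE_\bl$, where the attracting set is cut out by the vanishing of all coordinates of non-negative weight; the surviving $x$-coordinates (together with the entries frozen to $1$ at $p_\bl$) strictly raise $h^\bx$, giving nilpotency from positivity of $\xi_\Omega,\xi_\oOmega$ alone, and every $s$-coordinate $e_{b\to r}$ has weight $h^\bx(b)-\xi_{p(r)}\ge \xi_{k}+\xi_\Omega+\xi_\oOmega-\xi_{p(r)}>0$ by alignment, forcing $s=0$. (Your sign here is the right one; it is what makes the lemma true, even though the displayed weight for $e_{b\to r}$ later in the paper appears with the opposite sign.) Your route has the virtue of being exactly the machinery the paper deploys afterwards in the proof of Theorem~\ref{thm:irrationalcrystal}, so nothing new is needed; the paper's route avoids local charts entirely and does not presuppose anything about where the attracting set sits. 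The one step you must make explicit for your version to be complete is that the entire attracting set of $p_\bl$ lies in the chart $S_\bl\subset\mE_\bl$: this holds because the chart is the locus where the vectors $b_{k;i,j}$ are linearly independent, which is a $T$-invariant open condition, so a point whose flow enters the chart was in it all along. Also, the weights on the coordinates of $\mE_\bl$ are not literally ``the eigenvalues listed in Lemma~\ref{lem:anl}'' (those are the tangent weights of $\mM(\bfv,W)$ at $p_\bl$, a sub-multiset repackaged via arms and legs), but the height-difference formulas you actually use are the correct ones.
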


\begin{proof}
Let $[x,s,t]$ belong to the attracting set of $p_\bl$. Since $\xi_\Omega, \xi_{\overline \Omega}$ are both positive $x$ must be nilpotent, since otherwise some path acts with a non-zero eigenvalue and this eigenvalue goes to infinity as $z \to \infty$ ($z$ as in \eqref{eqn:zdefn}). Furthermore, we must have $s=0$, since otherwise it follows from the stability condition that there is some path $\pi$ in $Q$ such that $s \circ \pi \circ t \neq 0$, and using the alignment condition this map goes to infinity as $z \to \infty$. Thus $[x,s,t] \in \mL(\bfv, W)$ by definition.
\end{proof}

Since $\mL(\bfv, W)$ is equidimensional of dimension $\dim \mM(\bfv, W)/2$, Lemma \ref{lem:anl} implies that $\im M_\bx$ is contained in the set of $\bx$-regular multi-partitions. Lemma \ref{lem:anl} also shows that the attracting set of $p_\bl$ for any $\bx$-regular $\bl$ has dimension exactly $\dim \mM(\bfv, W)/2$, so, in the case when $\bx$ is aligned, Lemma \ref{lem:in-L} shows that $\im M_\bx$ consists exactly of all $\bx$-regular multi-partitions. This completes the proof of Theorem~\ref{thm:to-partitions}.


\subsection{Proof of Theorem \ref{thm:irrationalcrystal}}

Fix $Z \in\Irr \mL(\bfv, W)$. Fix a general aligned slope datum $\bx$, and let $\bl = M_\bx(Z)$. Fix $\ii \in \bz/n$. We must show that
\begin{equation} \label{eq:2coa}
M_\bx(e_\ii(Z)) = e_\ii^\bx \bl.
\end{equation}
To do this, we construct a tangent vector to $Z$ at $p_\bl$ corresponding to each canceling pair of brackets in $S_\ii^\bx(M_\bx(Z))$ which ``witnesses'' the canceling of brackets (see Proposition \ref{prop:flatfamily}). The geometric definition of the crystal operators in \S\ref{ss:quiver-crystal} implies that $M_\bx(e_\ii(Z))$ differs from $M_\bx(Z)$ by removing the highest box $b$ such that $\dim \ker x_\ii|_{V_\ii^{\geq h^\bx(b)}} = \dim \ker x_\ii$ (see \eqref{VH} for this notation). By moving a small amount in the direction of each of the tangent vectors discussed above, we find an element where this box corresponds to the first uncanceled ``$)$'' in $S_\ii^\bx(M_\bx(Z))$ from the right. We then use a semi-continuity argument to show that this happens generically. 

First, we need several technical lemmas: 

\begin{Lemma} \label{lem:ar-illegal} 
Fix a multi-partition $\bl$. If there exists a pair $(r,a)$ where $r$ is a removable $\ii$-node and $a$ is an addable $\ii$-node with
\begin{align*}
h^\bx(r) + \xi_{\Omega}+ \xi_{\overline \Omega} > h^\bx(a) > h^\bx(r),
\end{align*}
then $\bl$ has a $\bx$-illegal triple.
\end{Lemma}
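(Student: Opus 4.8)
The plan is to manufacture an explicit $\bx$-illegal triple out of the pair $(r,a)$, using the eigenvalue reformulation of $\bx$-illegality that already appears in the proof of Lemma~\ref{lem:anl}. Write $r = (k_r; i_r, j_r)$ and $a = (k_a; i_a, j_a)$, and set $\Delta := h^\bx(a) - h^\bx(r)$, so that the hypothesis reads $0 < \Delta < \xi_\Omega + \xi_{\overline\Omega}$. Recall that for a triple $(b, k, k')$ with $b \in \lambda(k)$ satisfying the divisibility condition (ii) of Definition~\ref{def:multiillegal}, the two weights $E_1, E_2$ of $T_{\bx^{(N)}}$ computed in the proof of Lemma~\ref{lem:anl} satisfy $E_1 + E_2 = \xi_\Omega + \xi_{\overline\Omega}$, and condition (iii) is precisely the statement that both are positive. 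Thus it suffices to produce a triple satisfying (i) and (ii) for which one of $E_1, E_2$ equals $\Delta$ (forcing the other to equal $\xi_\Omega + \xi_{\overline\Omega} - \Delta$), since both quantities are then positive by the height hypothesis.

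First I would record the corner data. Since $r$ is removable, $\arm_{\lambda(k_r)}(r) = \leg_{\lambda(k_r)}(r) = 0$, equivalently $\lambda(k_r)_{i_r} = j_r$ and $\lambda(k_r)'_{j_r} = i_r$; since $a$ is addable, $\arm_{\lambda(k_a)}(a) = \leg_{\lambda(k_a)}(a) = -1$, equivalently $\lambda(k_a)_{i_a} = j_a - 1$ and $\lambda(k_a)'_{j_a} = i_a - 1$. As both nodes have color $\ii$, we have $p(k_r) - i_r + j_r \equiv p(k_a) - i_a + j_a \pmod n$. Now split on the columns. If $j_r < j_a$, take $b = (k_a; i_a, j_r)$, $i' = k_a$, $j' = k_r$; the inequality $j_r \le \lambda(k_a)_{i_a}$ guarantees $b \in \lambda(k_a)$, and one reads off $\arm_{\lambda(k_a)}(b) = j_a - j_r - 1 \ge 0$ and $\leg_{\lambda(k_r)}(b) = i_r - i_a$. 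If instead $j_a \le j_r$, take $b = (k_r; i_r, j_a)$, $i' = k_r$, $j' = k_a$; here $j_a \le \lambda(k_r)_{i_r}$ guarantees $b \in \lambda(k_r)$, and $\arm_{\lambda(k_r)}(b) = j_r - j_a \ge 0$, $\leg_{\lambda(k_a)}(b) = i_a - 1 - i_r$.

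In either case the expression $p(i') - p(j') + \arm_{\lambda(i')}(b) + \leg_{\lambda(j')}(b) + 1$ telescopes to $\pm\bigl((p(k_a) - i_a + j_a) - (p(k_r) - i_r + j_r)\bigr)$, which is divisible by $n$ by the color identity, establishing (ii). Substituting the arm and leg values into the weight formulas of Lemma~\ref{lem:anl} then shows, by direct computation, that one of $E_1, E_2$ equals $\Delta$ and the other equals $\xi_\Omega + \xi_{\overline\Omega} - \Delta$; both are positive, so condition (iii) holds and $(b, i', j')$ is $\bx$-illegal. The only genuine subtlety is the case split on $j_r$ versus $j_a$: it is exactly what is needed to guarantee that the chosen box $b$ lands inside the relevant component (keeping its arm nonnegative so the combinatorics is meaningful), and the two cases are mirror images that interchange the roles of the row-corner and column-corner of the configuration. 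Note also that the leg is measured against the \emph{other} component $j'$ and so is allowed to be negative, which is why no further positivity check is required there. Everything past the case split is routine substitution, given the corner identities for $r$ and $a$ and the telescoping $E_1 + E_2 = \xi_\Omega + \xi_{\overline\Omega}$.
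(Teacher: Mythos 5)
Your proof is correct and follows essentially the same route as the paper's: the same case split on the columns of $r$ and $a$, the same choice of box (the "intersection" box lying in whichever component keeps the arm nonnegative), and the same verification of the color/divisibility condition. The only cosmetic difference is that you certify condition (iii) of Definition~\ref{def:multiillegal} via the eigenvalue pair $E_1 = \Delta$, $E_2 = \xi_\Omega + \xi_{\overline\Omega} - \Delta$ from the proof of Lemma~\ref{lem:anl}, whereas the paper manipulates the inequality $-\xi_\Omega < h^\bx(r) - h^\bx(a) + \xi_{\overline\Omega} < \xi_{\overline\Omega}$ directly; these are the same computation.
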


\begin{Remark} 
Lemma~\ref{lem:ar-illegal} can be interpreted geometrically: If $\bl$ is $\bx$-regular then, for any $\ii \in \bz/n$, $a \in A_\ii(\bl)$ and $r \in R_\ii(\bl)$, comparing the height function  $h^\bx$ on the centers of $a$ and $r$ orders them in the same way as comparing $h^\bx$ on the top corner of $r$ with $h^\bx$ on the bottom corner of $a$. 
\end{Remark}

\begin{proof}[Proof of Lemma \ref{lem:ar-illegal}]
Fix such a pair $(r,a)$, which we write in coordinates as $r = (k; i,j)$ and $a = (k'; i',j')$. Let $b_1= (i,j')$ and $b_2 = (i',j)$. If $j<j'$, then $b_2 \in \lambda(k')$. We claim that $(b_2, k', k)$ is $\bx$-illegal. To see this, first notice that 
\[
p(k') - p(k) + \arm_{\lambda(k')}(b_2) + \leg_{\lambda(k)}(b_2) + 1 = (p(k') - i' + j') - (p(k) - i + j),
\]
which is $0$ modulo $n$ since both $a$ and $r$ are $\ii$-nodes. Next, note that
\begin{equation*} 
\xi_k - \xi_{k'} + \xi_\Omega \leg_{\lambda(k)}(b_2) - \xi_{\ol{\Omega}} \arm_{\lambda(k')}(b_2) = h^\bx(r) - h^\bx(a) +\xi_{\oOmega}.
\end{equation*}
thus by Definition~\ref{def:multiillegal}\eqref{eqn:multiillegal}, $(b_2, k', k)$ is $\bx$-illegal if and only if 
\begin{equation} \label{4.5-2} 
-\xi_\Omega < h^\bx(r) - h^\bx(a) +\xi_{\oOmega} < \xi_\oOmega. 
\end{equation}
We are given that $h^\bx(r) + \xi_{\ol{\Omega}} + \xi_{\Omega} > h^\bx(a)$. Subtracting $h^\bx(a) +\xi_\Omega$ from both sides gives the left inequality in \eqref{4.5-2}. We are also given that $h^\bx(a) > h^\bx(r)$. Adding $\xi_{\oOmega} - h^\bx(a)$ to both sides gives the right inequality in \eqref{4.5-2}. Hence $(b_2, k', k)$ is in fact illegal.

If $j \geq j'$, then $b_1 \in \lambda(k)$ and a similar argument shows that $(b_1, k, k')$ is $\bx$-illegal. 
\end{proof}

Recall from \eqref{VH} that $V_\ii^{\geq H}$ is the span of the set of basis vectors of $V$ at height $\geq H$, using the coordinates from \S\ref{ss:coordinates}, so $\dim V_\ii^{\geq H}$ is  the number of $\ii$-boxes in $\bl$ of height at least $H$. Define $R_{\ii}^{\ge H}$ be the number of removable $\ii$-nodes at height at least $H$, and define $A_{\ii}^{\ge H}$ to be the number of addable $\ii$-nodes at height at least $H$. 

\begin{Lemma} \label{lem:illegal-squares} 
Fix a general slope datum $\bx$ and fix $H > \max_k \{ \xi_k \}$. Then
\begin{equation} \label{eq:add-a-box} 
\dim V_\ii^{\geq H} + \dim V_\ii^{\geq H + \xi_\Omega+ \xi_{\overline \Omega}} - \dim V_{\ii+\bar 1}^{\geq H+\xi_\Omega} - \dim V_{\ii-\bar 1}^{\geq H+\xi_{\overline \Omega}}
= R_\ii^{\geq H} - A_\ii^{\geq H+\xi_{\Omega} + \xi_{\overline \Omega}}.
\end{equation}
\end{Lemma}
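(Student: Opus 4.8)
The plan is to reduce to a single component partition and then recognize the left‑hand side of \eqref{eq:add-a-box} as a discrete mixed second difference of an indicator function; summation by parts collapses this to a signed count of the corners of the partition, which are exactly its removable and addable $\ii$-nodes. For the reduction, note that every box of $\bl$ lies in a unique component $\lambda(k)$ and every removable or addable $\ii$-node belongs to a single $\lambda(k)$, so all four terms on the left and both terms on the right of \eqref{eq:add-a-box} are additive over the components. Hence it suffices to prove the identity for a single colored partition $\mu=\lambda(k)$ with offset $\xi_k$ and base color $p(k)$. I work in the quadrant of potential boxes, set $\chi(i,j)=\mathbf{1}\{(i,j)\in\mu\}$ with the convention $\chi(i,j)=0$ whenever $i\le 0$ or $j\le 0$, and recall from \S\ref{ss:coordinates} that increasing $i$ by one lowers the color by $\bar 1$ and raises the height by $\xi_\Omega$, while increasing $j$ by one raises the color by $\bar 1$ and raises the height by $\xi_{\oOmega}$.

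The key identity is to set $g(i,j)=\mathbf{1}\{\overline{c}(i,j)=\ii\}\,\mathbf{1}\{h^\bx(i,j)\ge H\}$ and observe, using the color– and height–shift rules above, that the mixed second difference $g(i,j)-g(i-1,j)-g(i,j-1)+g(i-1,j-1)$ expands precisely into the signed sum of the four indicator functions whose $\chi$-weighted totals are the four terms on the left of \eqref{eq:add-a-box}. Thus that left-hand side equals $\sum_{i,j}\chi(i,j)\bigl(g(i,j)-g(i-1,j)-g(i,j-1)+g(i-1,j-1)\bigr)$. Because $\chi$ has finite support, I may reindex each summand to transfer all shifts onto $\chi$, obtaining with no boundary terms the expression $\sum_{i,j}\bigl(\chi(i,j)-\chi(i+1,j)-\chi(i,j+1)+\chi(i+1,j+1)\bigr)\,g(i,j)$.

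For a Young diagram the bracket $\chi(i,j)-\chi(i+1,j)-\chi(i,j+1)+\chi(i+1,j+1)$ equals $+1$ exactly when $(i,j)$ is a removable node of $\mu$, equals $-1$ exactly when $(i+1,j+1)$ is an addable node of $\mu$, and vanishes otherwise. The sum therefore becomes $\sum_{r}g(r)-\sum_{a}g(a-(1,1))$, taken over all removable nodes $r$ and all addable nodes $a$ of $\mu$. Since translating a box by $(1,1)$ preserves its color and lowers its height by $\xi_\Omega+\xi_{\oOmega}$, we have $g(a-(1,1))=\mathbf{1}\{\overline{c}(a)=\ii\}\,\mathbf{1}\{h^\bx(a)\ge H+\xi_\Omega+\xi_{\oOmega}\}$, while $g(r)=\mathbf{1}\{\overline{c}(r)=\ii\}\,\mathbf{1}\{h^\bx(r)\ge H\}$. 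Summing, the two sums count exactly the contribution of $\mu$ to $R_\ii^{\ge H}$ and to $A_\ii^{\ge H+\xi_\Omega+\xi_{\oOmega}}$, which is the right-hand side of \eqref{eq:add-a-box} for $\mu$.

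The one place the corner dictionary fails is at $(i,j)=(0,0)$: there the bracket is $0$, yet $(1,1)$ is an addable node when $\mu=\varnothing$. This is exactly where the hypothesis $H>\max_k\xi_k$ enters, since $g(0,0)=\mathbf{1}\{h^\bx(0,0)\ge H\}=\mathbf{1}\{\xi_k\ge H\}=0$, so the anomalous term contributes nothing to either side and the identification is harmless. Summing the resulting single-partition identities over $k$ then yields \eqref{eq:add-a-box}. I expect the main obstacle to be the careful sign, color, and height bookkeeping in the expansion of the mixed second difference, together with verifying the corner dictionary along both coordinate axes and confirming that $H>\max_k\xi_k$ is precisely what removes the anomalous term at the origin.
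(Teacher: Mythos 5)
Your overall strategy is sound and genuinely different from the paper's. The paper proves the identity by truncating $\bl$ at height $H$ and re-inserting the boxes one at a time in increasing order of height, checking in four cases that each insertion changes both sides equally; you instead reduce to a single component by additivity and perform a discrete summation by parts, converting the mixed second difference of the color/height indicator paired against $\chi$ into a signed sum over the corners of the partition. The reduction to one partition, the reindexing with no boundary terms, the corner dictionary for $\chi(i,j)-\chi(i+1,j)-\chi(i,j+1)+\chi(i+1,j+1)$, and the disposal of the anomaly at the origin via $H>\max_k\{\xi_k\}$ are all correct; this is arguably cleaner than the paper's argument, whose four local verifications are asserted rather than checked.

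There is, however, one assertion that does not literally hold, and it conceals a real discrepancy. With the paper's conventions --- $\bar c(k;i,j)=p(k)-i+j$ and $h^\bx(k;i,j)=\xi_k+\xi_\Omega i+\xi_{\oOmega}j$, so that increasing $i$ lowers the color by $\bar 1$ and adds $\xi_\Omega$ to the height --- the term $g(i-1,j)$, summed against $\chi$, counts boxes of color $\ii-\bar 1$ at height at least $H+\xi_\Omega$, and $g(i,j-1)$ counts boxes of color $\ii+\bar 1$ at height at least $H+\xi_{\oOmega}$. Your expansion therefore produces $-\dim V_{\ii-\bar 1}^{\geq H+\xi_\Omega}-\dim V_{\ii+\bar 1}^{\geq H+\xi_{\oOmega}}$ for the middle terms, whereas the displayed left-hand side of \eqref{eq:add-a-box} has $-\dim V_{\ii+\bar 1}^{\geq H+\xi_\Omega}-\dim V_{\ii-\bar 1}^{\geq H+\xi_{\oOmega}}$; these differ unless $\xi_\Omega=\xi_{\oOmega}$. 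Since your corner count on the right is unambiguous, what you have actually proved is the swapped version --- and that version is the correct one. The identity as printed already fails for $\ell=1$, $\mu=(2)$, $p(1)=\ii$, $n\geq 3$: taking $\xi_\Omega>\xi_{\oOmega}$ and $\xi_1+2\xi_{\oOmega}<H\leq\xi_1+\xi_\Omega+\xi_{\oOmega}$, the right side is $0$ while the printed left side is $1$. (The same interchange of $\ii+\bar 1$ and $\ii-\bar 1$ occurs in the paper's own proof and in the maps displayed in Lemma \ref{lem:new} and \eqref{eq:im}, so this is a consistent slip of convention in the paper rather than a defect of your mechanism.) You should either state explicitly that you are proving the corrected identity, or write out the expansion of the mixed second difference term by term instead of asserting that it matches the printed left-hand side ``precisely.''
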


\begin{proof}
The formula is true if $\bl$ has no boxes of height $\geq H$, as all terms are $0$. So truncate at this height, and then start adding the boxes back in order of height (so in particular after each box is added one always sees a valid  multi-partition). One can check that each box you add has the same effect on each side of \eqref{eq:add-a-box}:
\begin{itemize}[leftmargin=10pt]
\item If an $(\ii+\bar 1)$-box is added at height less than $H+\xi_\Omega$ or an $(\ii-\bar 1)$-box is added at height less than $H+\xi_\oOmega$, there is no change. 
  
\item If an $(\ii+\bar 1)$-box is added at height at least $H+\xi_\Omega$ or an $(\ii-\bar 1)$-box is added at height at least $H+\xi_\oOmega$, then both sides decrease by $1$. 

\item If an $\ii$-box is added at height less than $H + \xi_{\Omega} + \xi_{\overline \Omega}$, then both sides increase by $1$.
  
\item If an $\ii$-box is added at height at least $H + \xi_{\Omega} + \xi_{\overline \Omega}$, then both sides increase by $2$.
\end{itemize}
 
\noindent Thus once $\bl$ has been completely reconstructed \eqref{eq:add-a-box} still holds. 
\end{proof}

Now fix a family $(\iota_{\bx^{(N)}})_{N \geq 0}$ satisfying (F1)-(F3), and choose $N$ large enough so that $F_{\iota_{\bx^{(N)}}}(\bfv, W)=\mF(\bfv, W)$, and $M_{\bx^{(N)}} = M_\bx$. The action of $T_{\bx^{(N)}}$ on $\mM(\bfv, W)$ naturally extends to an action on the affine space $\mE_{\bl}$ from \S\ref{ss:coordinates}, where the action is described on coordinates by 
\begin{itemize}[leftmargin=10pt]
\item If $c(b')=c(b)+ \overline{1}$, then $z \cdot e_{b\rightarrow b'}= z^{  \xi^{(N)}_k +\xi^{(N)}_\Omega i + \xi^{(N)}_{\oOmega} j -\xi^{(N)}_{k'} - \xi^{(N)}_\Omega i' - \xi^{(N)}_{\oOmega} j'+ \xi^{(N)}_\oOmega } e_{b\rightarrow b'}$, 

\item  If $c(b')=c(b)- \overline{1}$, then $z \cdot e_{b\rightarrow b'}= z^{ \xi^{(N)}_k +\xi^{(N)}_\Omega i + \xi^{(N)}_{\oOmega} j -\xi^{(N)}_{k'} - \xi^{(N)}_\Omega i' - \xi^{(N)}_{\oOmega} j'+ \xi^{(N)}_\Omega } e_{b\rightarrow b'}$,

\item $z \cdot e_{ b \rightarrow r} =z^{ -\xi_k -\xi^{(N)}_\Omega i - \xi^{(N) }_\oOmega j +\xi_{p(r)}} e_{b \rightarrow r}.$
\end{itemize}
Clearly the attracting set of $p_\bl$ in $\mE_\bl$ consists of points such that, 
\begin{align}
& \text{If $h^\bx(b') \leq h^\bx(b)+ \xi_\oOmega$, then $e_{b \rightarrow b'}=0$.}\\
& \text{If $h^\bx(b') \leq h^\bx(b)+ \xi_\Omega$,  then $e_{b \rightarrow b'}=0$.}  \\
& \text{If $h^\bx(b) \leq \xi_{p(r)}$,  then $e_{b \rightarrow r}=0$.}  
\end{align}
By Lemma~\ref{lem:in-L} the attracting set for $p_\bl$ in $\mathfrak{M}(\bfv, W)$ is contained in $\mathfrak{L}(\bfv, W)$, and hence is exactly $Z$. It follows that, in a neighborhood of $p_\bl$, $Z$ is defined by these equations along with the defining equations of $\mM(\bfv, W)$ inside $\mE_\bl$. In particular, for all $[V, x, 0,t] \in Z$,
\begin{equation} \label{eq:im}
\im {}_\ii x|_{V_{\ii+\bar 1}^{\geq H+\xi_\Omega} \oplus  V_{\ii-\bar 1}^{\geq H+\xi_{\overline \Omega}}} \subseteq V_\ii^{\geq H+\xi_\Omega+ \xi_{\overline \Omega}}.
\end{equation}

\begin{Lemma} \label{lem:new} 
For any $[V, x, 0, t] \in Z$  and $H > \max_k \{ \xi_k \}$, $\dim \ker x_\ii|_{V_\ii^{\geq H}} \geq R_\ii^{\geq H} -A_\ii^{\geq H +\xi_{\Omega} + \xi_{\overline \Omega}}$.
\end{Lemma}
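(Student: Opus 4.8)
The plan is to rephrase the claim as an upper bound on the rank of $x_\ii$ and to extract that bound from the preprojective relation together with \eqref{eq:im}. Write $a = \dim V_\ii^{\geq H}$, $b_+ = \dim V_{\ii+\bar 1}^{\geq H+\xi_\Omega}$, $b_- = \dim V_{\ii-\bar 1}^{\geq H+\xi_\oOmega}$ and $c = \dim V_\ii^{\geq H+\xi_\Omega+\xi_\oOmega}$. Lemma~\ref{lem:illegal-squares} identifies the target quantity $R_\ii^{\geq H} - A_\ii^{\geq H+\xi_\Omega+\xi_\oOmega}$ with $a + c - b_+ - b_-$, so since $\dim\ker x_\ii|_{V_\ii^{\geq H}} = a - \rank x_\ii|_{V_\ii^{\geq H}}$, the statement is equivalent to the rank bound
\[
\rank x_\ii|_{V_\ii^{\geq H}} \leq b_+ + b_- - c.
\]
First I would record that the rank of a map varying algebraically over the irreducible variety $Z$ is maximal at a generic point and can only drop on closed subsets; hence it suffices to prove this bound at a generic $[V,x,0,t]\in Z$, because the resulting bound on the generic (=maximal) rank then holds at every point and yields the asserted lower bound on $\dim\ker x_\ii|_{V_\ii^{\geq H}}$ everywhere.

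Next I would organize the two maps into a complex supported in the relevant height window. Set $P = V_{\ii+\bar 1}^{\geq H+\xi_\Omega}\oplus V_{\ii-\bar 1}^{\geq H+\xi_\oOmega}$ and $Q = V_\ii^{\geq H+\xi_\Omega+\xi_\oOmega}$. The description of $Z$ near $p_\bl$ by the attracting-set equations of \S\ref{ss:coordinates} shows that $x_\ii$ carries $V_\ii^{\geq H}$ into $P$, and \eqref{eq:im} says precisely that ${}_\ii x$ carries $P$ into $Q$. The preprojective relation ${}_\ii x\circ x_\ii = 0$ then gives $\im\bigl(x_\ii|_{V_\ii^{\geq H}}\bigr)\subseteq\ker\bigl({}_\ii x|_P\bigr)$, whence
\[
\rank x_\ii|_{V_\ii^{\geq H}} \leq \dim\ker\bigl({}_\ii x|_P\bigr) = \dim P - \rank\bigl({}_\ii x|_P\bigr) = b_+ + b_- - \rank\bigl({}_\ii x|_P\bigr).
\]
Comparing with the target, the entire statement reduces to showing $\rank({}_\ii x|_P)\geq c$; since \eqref{eq:im} already gives $\im({}_\ii x|_P)\subseteq Q$ with $\dim Q = c$, this is exactly the assertion that \emph{${}_\ii x|_P$ is surjective onto $Q$ at a generic point of $Z$}.

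This surjectivity is the heart of the matter, and I expect it to be the main obstacle. At $p_\bl$ itself it can genuinely fail — the leftmost box of a row is hit by no structural arrow — which is consistent with the inequality being strict there, so surjectivity is only to be hoped for generically. To establish it I would work in the local coordinates of \S\ref{ss:coordinates}, where the matrix of ${}_\ii x|_P$ is the fixed structural part plus the free deformation coordinates (and the entries these force through the preprojective relations). Using that $\bl$ is $\bx$-regular, so that by Lemma~\ref{lem:anl} the attracting set $Z$ is full-dimensional and the deformation coordinates are genuinely free, I would produce a preimage in $P$ for each color-$\ii$ basis vector $\beta$ spanning $Q$: boxes reachable by a structural arrow are immediate, while for the remaining ones I would show that the relevant relation-determined entry is a nonzero function of the free coordinates. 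Ordering the $\beta$'s by decreasing $h^\bx$ gives an upper-triangular incidence pattern, so that surjectivity reduces to the nonvanishing of the corresponding diagonal entries on $Z$, which is a genericity statement about an explicit polynomial. The delicate point — and where Lemma~\ref{lem:ar-illegal} on the interleaving of addable and removable $\ii$-nodes is likely needed — is to verify that these diagonal entries are not identically zero, i.e.\ that the deformation directions available in the full-dimensional attracting set of an $\bx$-regular multi-partition genuinely suffice to cover all of $Q$.
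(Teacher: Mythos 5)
Your reduction is exactly the paper's: identify the right-hand side with $\dim V_\ii^{\geq H}+\dim V_\ii^{\geq H+\xi_\Omega+\xi_\oOmega}-\dim P$ via Lemma~\ref{lem:illegal-squares}, use the preprojective relation to trap $\im\bigl(x_\ii|_{V_\ii^{\geq H}}\bigr)$ inside $\ker\bigl({}_\ii x|_P\bigr)$, and pass from a generic point to all of $Z$ by semicontinuity of rank. The statement you correctly isolate as the remaining content --- that ${}_\ii x|_P$ generically surjects onto $Q=V_\ii^{\geq H+\xi_\Omega+\xi_\oOmega}$ --- is also the paper's remaining content, namely that the inclusion \eqref{eq:im} is generically an equality.

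However, there is a genuine gap at precisely that step, and it stems from a false premise. You assert that surjectivity of ${}_\ii x|_P$ onto $Q$ ``can genuinely fail'' at $p_\bl$ because ``the leftmost box of a row is hit by no structural arrow,'' and on that basis you replace a one-line argument with an elaborate deformation scheme that you do not carry out (you yourself flag the nonvanishing of the relevant diagonal entries as the delicate, unverified point). In fact surjectivity holds \emph{at $p_\bl$ itself}: in the basis of Figure~\ref{fig:fixed}, every box of $\bl$ other than the corner $(k;1,1)$ of each component receives a structural matrix element equal to $1$ from one of its lower neighbors, and that neighbor automatically lies in $P$ whenever the box lies in $Q$. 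The only $\ii$-colored basis vectors not in the image of ${}_\ii x$ at $p_\bl$ correspond to the corners, whose heights are $\xi_k+\xi_\Omega+\xi_\oOmega$; the hypothesis $H>\max_k\{\xi_k\}$ --- which your argument never invokes --- is there precisely to exclude these from $Q$. Once equality in \eqref{eq:im} is known at the single point $p_\bl$, semicontinuity of rank gives it on a dense open subset of $Z$, and your remaining steps then close the proof. No appeal to $\bx$-regularity, to Lemma~\ref{lem:ar-illegal}, or to the dimension count of Lemma~\ref{lem:anl} is needed for this lemma.
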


\begin{proof}
For the special point $p_\bl$, \eqref{eq:im} holds with equality, so by semi-continuity it holds with equality on an open dense subset.  Thus generically
\[
\dim \ker {}_\ii x|_{V_{\ii+\bar 1}^{\geq H+\xi_\Omega} \oplus  V_{\ii-\bar 1}^{\geq H+\xi_{\overline \Omega}}}  = \dim V_{\ii+\bar 1}^{\geq H+\xi_\Omega} + \dim  V_{\ii-\bar 1}^{\geq H+\xi_{\overline \Omega}} - \dim V_\ii^{\geq  H+\xi_\Omega+ \xi_{\overline \Omega}}. 
\]
As in \S\ref{ss:quiver-crystal}, the image of 
\[
  x_\ii \colon V_\ii^{\geq H} \rightarrow V_{\ii+\bar 1}^{\geq H+\xi_\Omega} \oplus  V_{\ii-\bar 1}^{\geq H+\xi_{\overline \Omega}}
\]
is contained in the kernel of ${}_\ii x$. Hence generically
\begin{align*}
\dim \ker x_\ii|_{  V_\ii^{\geq H}}  & \geq \dim  V_\ii^{\geq H} - \dim \ker {}_\ii x|_{V_{\ii+\bar 1}^{\geq H+\xi_\Omega} \oplus V_{\ii-\bar 1}^{\geq H+\xi_{\overline \Omega}}} \\ 
& = \dim V_\ii^{\geq H} + \dim V_\ii^{\geq H + \xi_\Omega+ \xi_{\overline \Omega}} - \dim V_{\ii+\bar 1}^{\geq H+\xi_\Omega} - \dim V_{\ii-\bar 1}^{\geq H+\xi_{\overline \Omega}}\\
& =R_\ii^{\geq H} -A_\ii^{\geq H +\xi_{\Omega} + \xi_{\overline \Omega}},
\end{align*}
where the last equality is by Lemma~\ref{lem:illegal-squares}. So we know that this inequality holds on a dense open subset. A lower bound on the dimension of the kernel can be rephrased as the vanishing of minors of a certain size, and so the inequality must hold everywhere. 
\end{proof}

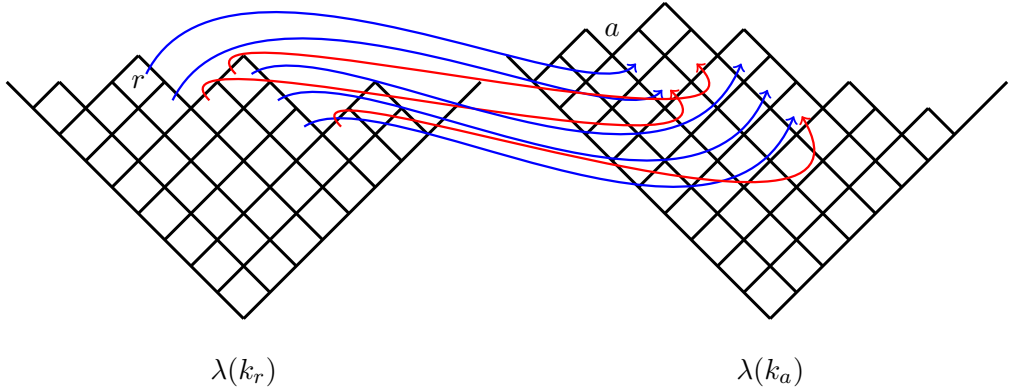
\begin{figure}
\begin{tikzpicture}[scale=0.35]

\draw[line width = 0.04cm] (0,0)--(9,9);
\draw[line width = 0.04cm] (-1,1) -- (6,8);
\draw[line width = 0.04cm] (-2,2) -- (5,9);
\draw[line width = 0.04cm] (-3,3) -- (2,8);
\draw[line width = 0.04cm] (-4,4) -- (1,9);
\draw[line width = 0.04cm] (-5,5) -- (0,10);
\draw[line width = 0.04cm] (-6,6) -- (-3,9);
\draw[line width = 0.04cm] (-7,7) -- (-4,10);
\draw[line width = 0.04cm] (-8,8) -- (-7,9);

\draw[line width = 0.04cm] (0,0) -- (-9,9);
\draw[line width = 0.04cm] (1,1) -- (-7,9);
\draw[line width = 0.04cm] (2,2) -- (-5,9);
\draw[line width = 0.04cm] (3,3) -- (-4,10);
\draw[line width = 0.04cm] (4,4) -- (-1,9);
\draw[line width = 0.04cm] (5,5) -- (0,10);
\draw[line width = 0.04cm] (6,6) -- (4,8);
\draw[line width = 0.04cm] (7,7) -- (5,9);

\draw[line width = 0.04cm] (20,0) -- (29,9);
\draw[line width = 0.04cm] (19,1) -- (26,8);
\draw[line width = 0.04cm] (18,2) -- (24,8);
\draw[line width = 0.04cm] (17,3) -- (23,9);
\draw[line width = 0.04cm] (16,4) -- (21,9);
\draw[line width = 0.04cm] (15,5) -- (20,10);
\draw[line width = 0.04cm] (14,6) -- (19,11);
\draw[line width = 0.04cm] (13,7) -- (17,11);
\draw[line width = 0.04cm] (12,8) -- (16,12);
\draw[line width = 0.04cm] (11,9) -- (13,11);

\draw[line width = 0.04cm] (20,0) -- (10,10);
\draw[line width = 0.04cm] (21,1) -- (12,10);
\draw[line width = 0.04cm] (22,2) -- (13,11);
\draw[line width = 0.04cm] (23,3) -- (15,11);
\draw[line width = 0.04cm] (24,4) -- (16,12);
\draw[line width = 0.04cm] (25,5) -- (19,11);
\draw[line width = 0.04cm] (26,6) -- (23,9);
\draw[line width = 0.04cm] (27,7) -- (26,8);

\draw (-4, 9) node {$r$};
\draw (14, 11) node {$a$};

\draw[line width = 0.03cm, ->, color=blue] (-3.7, 9.3) .. controls (-0.7, 15.3) and (13.2,7.7) .. (14.9,9.7);
\draw[line width = 0.03cm, ->, color=blue] (-2.7, 8.3) .. controls (0.3, 14.3) and (14.2,6.7) .. (15.9,8.7);

\draw[line width = 0.03cm, ->, color=red] (-1.3, 8.3) .. controls (-4.3, 11.3) and (20.2,4.7) .. (16.2,8.7);
\draw[line width = 0.03cm, ->, color=red] (-0.3, 9.3) .. controls (-3.3, 12.3) and (21.2,5.7) .. (17.2,9.7);

\draw[line width = 0.03cm, ->, color=blue] (0.3, 9.3) .. controls (3.3, 11.3) and (16.2,3) .. (18.9,9.7);
\draw[line width = 0.03cm, ->, color=blue] (1.3, 8.3) .. controls (4.3, 10.3) and (17.2,2) .. (19.9,8.7);
\draw[line width = 0.03cm, ->, color=blue] (2.3, 7.3) .. controls (5.3, 9.3) and (18.2,1) .. (20.9,7.7);

\draw[line width = 0.03cm, ->, color=red] (3.7, 7.3) .. controls (0.7, 10.3) and (25.2,1) .. (21.2,7.7);

\draw(0,-2) node {$\lambda(k_r)$};

\draw(20,-2) node {$\lambda(k_a)$};

\end{tikzpicture}

\caption{\label{fig:tangent-vectors} 
The one-parameter family in $\mE_\bl$ from Proposition \ref{prop:flatfamily}. The blue arrows are matrix elements for ${}_{\overline{k}-1} x_{\overline{k}}$, and the red arrows are matrix elements for  ${}_{\overline{k}+1} x_{\overline{k}}$, for various $\overline{k}$. All other coordinates are $0$. In general, to define these new matrix elements, one follows the rim of $\lambda(k_r)$, beginning at $r$, putting in matrix elements of $-\eps$ as shown. As soon as one would need to draw an arrow to a box which is not present in $\lambda(k_a)$ one may stop, or if one reaches the end of the rim of $\lambda(k_r)$ one may stop. One of these must happen at some point because $j_a-1 \geq j_r$. 
}
\end{figure}

We are now ready to construct the tangent vectors we need:

\begin{Proposition} \label{prop:flatfamily} 
Fix $Z \in \Irr \mL(\bfv, W)$ and let $\bl = M_\bx(Z)$. Pick $\alpha = (a,r)$ where $a \in A_\ii(\bl)$ and $r \in R_\ii(\bl)$ are such that $h^\bx(a) > h^\bx(r)$.  Let $a'$ be the box with coordinates $(k_a;i_a-1,j_a)$ and $a''$ the box with coordinates $(k_a;i_a,j_a-1)$. Then there is an embedding $d_\alpha \colon \bc \hookrightarrow \mE_\bl$ such that $d_\alpha(0)= p_\bl$, and, for all $\eps \in \bc^*$, the coordinates for $d_\alpha(\eps)$ as in \S\ref{ss:coordinates} satisfy 
\begin{enumerate}[label={\rm (\roman*)}, leftmargin=20pt]
\item \label{lb1} $x^{r, a'} -x^{r,a''} = \eps$.

\item \label{lb2} For all other pairs $\tilde a \in A_\ii(\bl)$ and $\tilde r \in R_\ii(\bl)$, $x^{\tilde r, \tilde a'} -x^{\tilde r,\tilde a''} = 0$, where $\tilde a'$ is the box with coordinates $(k_{\tilde a};i_{\tilde a}-1,j_{\tilde a})$ and $\tilde{a}''$ is the box with coordinates $(k_{\tilde a};i_{\tilde a},j_{\tilde a}-1)$.

\item \label{pt} The resulting tangent vector $t_{a,r}$ in ${\rm T}_{p_\bl} \mE_\bl$ lies in ${\rm T}_{p_\bl} Z$. 
\end{enumerate}
Here, by convention, we set $x^{b,b'} = 0$ if any of the coordinates of $b'$ are negative.
\end{Proposition}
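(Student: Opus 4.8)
The plan is to write $d_\alpha$ down explicitly in the coordinates of $\mE_\bl$ and then show that, near $p_\bl$, it actually lands in $Z$; condition (iii) is then immediate, and (i)--(ii) are read off from the endpoints of the construction. Recall from the discussion preceding Lemma~\ref{lem:new} that in a neighborhood of $p_\bl$ the component $Z$ coincides with the attracting set of $p_\bl$ under $T_{\bx^{(N)}}$, which inside $\mE_\bl$ is cut out by the vanishing of all positive-weight coordinates together with the preprojective relations. Thus it suffices to produce a curve $d_\alpha$ with $d_\alpha(0)=p_\bl$ such that (a) for every $\eps$ the point $d_\alpha(\eps)$ satisfies the preprojective relations, and (b) only negative-weight coordinates of $\mE_\bl$ are switched on.

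First I would define the curve following the recipe of Figure~\ref{fig:tangent-vectors}: starting from $r$, walk along the rim of $\lambda(k_r)$ and at each step switch on the coordinate of the map ${}_{\bar k \pm \bar 1}x_{\bar k}$ running from the current rim box of $\lambda(k_r)$ to the matching box near $a$ in $\lambda(k_a)$, giving it the value $-\eps$. The walk stops when the target box leaves $\lambda(k_a)$ or when the rim of $\lambda(k_r)$ ends, and it does stop because $j_a-1\ge j_r$. Every switched-on coordinate runs from a box of $\lambda(k_r)$ to a box of $\lambda(k_a)$, so two such edges never compose inside the moment map; consequently the quadratic ($\eps^2$) part of $\mu$ on $d_\alpha(\eps)$ vanishes identically, the whole curve lies in $\mM$ as soon as the linearized relation holds, and (a) reduces to that linearized statement. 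Reading off the two edges leaving $r$ gives condition (i), namely $x^{r,a'}-x^{r,a''}=\eps$, while for every other addable--removable pair the relevant edges are never switched on, which is (ii).

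Next I would verify (b) by computing weights. Using the weight formulas recorded before Lemma~\ref{lem:new}, both edges leaving $r$, that is $r\to a'$ and $r\to a''$, have weight $h^\bx(r)-h^\bx(a)+\xi_\Omega+\xi_{\oOmega}$. Since $\bl=M_\bx(Z)$ is $\bx$-regular by Theorem~\ref{thm:to-partitions}, Lemma~\ref{lem:ar-illegal} rules out an addable--removable pair with $h^\bx(r)<h^\bx(a)<h^\bx(r)+\xi_\Omega+\xi_{\oOmega}$; combined with the hypothesis $h^\bx(a)>h^\bx(r)$ this forces $h^\bx(a)-h^\bx(r)>\xi_\Omega+\xi_{\oOmega}$, so these weights are negative. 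An analogous computation for each remaining switched-on edge, again using the height inequalities forced by $\bx$-regularity, shows that every such coordinate is a negative-weight (attracting) direction, establishing (b). Hence $d_\alpha(\eps)$ lies in the attracting set of $p_\bl$, so in $Z$, and its tangent vector $t_{a,r}$ lies in ${\rm T}_{p_\bl}Z$, which is (iii).

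The \emph{main obstacle} is the linearized preprojective relation in (a). Here $d\mu_{p_\bl}(t_{a,r})$ is a sum of products in which one factor is a newly switched-on edge (value $-\eps$ to first order) and the other is an edge of $p_\bl$ (value $1$), and one must check that at every vertex $\ii$ and every pair of color-$\ii$ boxes these terms cancel. This is precisely what the sign convention and the ``follow the rim'' prescription are arranged to achieve: each interior box of the traversed rim contributes two equal and oppositely signed terms from its two adjacent rim edges, so everything cancels except at the two ends, where the leftover contribution is exactly the datum recorded in (i). Carrying out this cancellation requires a careful case analysis of the convex and concave corners of the rim of $\lambda(k_r)$ and of how the matching boxes sit inside $\lambda(k_a)$, and this bookkeeping is the technical heart of the argument.
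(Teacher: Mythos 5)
Your overall strategy is the one the paper uses: switch on the coordinates prescribed by Figure~\ref{fig:tangent-vectors} with value $-\eps$, observe that the preprojective relations hold to first order in $\eps$ so that one gets a tangent vector to $\mM(\bfv,W)$, and note that every switched-on coordinate is an attracting direction for $T_{\bx^{(N)}}$, so the vector lies in the tangent space to the attracting set of $p_\bl$, which (by Lemmas~\ref{lem:anl} and~\ref{lem:in-L}) coincides with $Z$ near $p_\bl$. Your weight computation and your remark that the two edges leaving $r$ produce exactly the datum in (i) while no other addable--removable pair is touched are consistent with what the paper does.

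There is, however, a genuine gap at the start of your construction: you assert that the rim walk terminates ``because $j_a-1\ge j_r$,'' but this inequality is not automatic and is in fact false in general. For instance $a$ can sit far to the $i$-side of its partition, with $j_a\le j_r$, while still satisfying $h^\bx(a)>h^\bx(r)$. What is actually true, and what the paper proves, is a dichotomy: from $\bx$-regularity and Lemma~\ref{lem:ar-illegal} one gets $h^\bx(a)\ge h^\bx(r)+\xi_\Omega+\xi_{\oOmega}$, i.e.\ $\xi_\Omega(i_a-i_r)+\xi_{\oOmega}(j_a-j_r)\ge \xi_{k_r}-\xi_{k_a}+\xi_\Omega+\xi_{\oOmega}$; if both $i_a\le i_r$ and $j_a\le j_r$ this forces $|\xi_{k_a}-\xi_{k_r}|\ge\xi_\Omega+\xi_{\oOmega}$, which contradicts the \emph{aligned} hypothesis. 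Hence either $i_a-1\ge i_r$ or $j_a-1\ge j_r$, and only in the second case does the construction of Figure~\ref{fig:tangent-vectors} as you describe it apply; the first case requires the symmetric construction (walking the rim in the other direction). Your proof both omits this case and never invokes the aligned hypothesis where it is essential, so as written it does not cover all pairs $(a,r)$ allowed by the statement. Separately, your claim that the curve satisfies the preprojective relations \emph{exactly} (because no two new edges compose) is stronger than needed and not obviously correct when $k_r=k_a$; the paper only claims, and only needs, that the relations hold modulo $\eps^2$.
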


\begin{proof}
Since $\bl$ is $\bx$-regular, the condition $h^\bx(a) > h^\bx(r)$ together with Lemma \ref{lem:ar-illegal} implies that $h^\bx(a) \ge h^\bx(r)+ \xi_\Omega+\xi_\oOmega$. This can be written as
\[
\xi_\Omega(i_a - i_r) + \xi_{\ol{\Omega}}(j_a - j_r) \ge \xi_{k_r} - \xi_{k_a} + \xi_\Omega + \xi_{\ol{\Omega}}.
\]
If both $i_a \le i_r$ and $j_a \le j_r$, then both sides of the inequality are non-positive, which implies that $|\xi_{k_a} - \xi_{k_r}| \ge \xi_{\Omega} + \xi_{\ol{\Omega}}$. But since $\bx$ is assumed to be aligned, this does not happen. So we know that either $i_a-1 \geq i_r$ or $j_a-1 \geq j_r$. 
  
In the second case, consider the family in $\mE_\bl$ given in Figure \ref{fig:tangent-vectors}. The defining equations of $\mM(\bfv, W)$ inside $\mE_{\bl}$ (i.e., the preprojective relations) are satisfied modulo $\eps^2$, so this family defines a tangent vector to $\mM(\bfv, W)$. Furthermore, this tangent vector is in the attracting set of $p_\bl$ in $\mE_\bl$, so it is in fact a tangent vector to the irreducible component $Z$.
  
The first case is handled by a symmetric construction.
\end{proof}
  
Following the definition of the crystal operators in \S\ref{ss:quiver-crystal}, to obtain a generic point in $e_\ii(Z)$, one should proceed as follows: start with a generic point $z \in Z$, and choose a representative $[V,x,t]$.  Take the  quotient of this representation by a generic 1-dimensional module in the $\ii$-socle of $V$ (that is, the socle of $z$ intersected with $V_\ii$) to obtain $[V', x',t']$. This will be a representative of a generic point $z'$ in $e_\ii(Z)$. 

The $\ii$-socle of $V_\ii$ is exactly $\ker x_\ii$. Quotienting out by a element of $\ker x_\ii$ will decrease $\dim V_\ii^{\geq H}$ by $1$ for low heights $H$, and will not change $\dim V_\ii^{\geq H}$ for high values of $H$. If this is done generically, the cutoff value of $H$ between these two behaviors will be as low as possible, which is to say that $\dim V_\ii^{\geq H}$ will be unchanged for all $H>\bar{H}$, where $\bar{H}$ is the lowest value of $H$ such that $\dim \ker x_\ii^{>H} < \dim \ker x_\ii^{\geq H}$. 

By the definition of the torus action, there is an $\ii$ box at height $H$ in $\lim_{z \rightarrow \infty} z \cdot (V,x,t)$ exactly when 
\begin{equation} \label{eq:H-diff}
\dim V_\ii^{>H} < \dim \ker V_\ii^{\geq H},
\end{equation} 
and similarly for $(V',x',t')$. The only height where \eqref{eq:H-diff} differs for $V$ and $V'$ is $\bar H$ from the previous paragraph. Thus $z'$ will flow to $p_{\bl'}$, where $\bl'$ is obtained from $\bl$ by removing the box at height $\bar H$. That is, $\bl'$ is obtained from $\bl$ by removing the lowest $\ii$ colored box $b$ such that $\dim \ker x_\ii|_{V_\ii^{> h^{\bx}(b)}} < \dim \ker x_\ii$. Thus to establish \eqref{eq:2coa}, it suffices to show that
\begin{enumerate}[leftmargin=20pt]
\item For generic $[x,0,t] \in Z$, $\dim \ker x_\ii$ is the number of uncanceled ``)'' brackets in $S_\ii^\xi(\bl)$ (see \S\ref{ss:comb-cryst}). 
  
\item Let $r$ be the removable box corresponding to the first uncanceled ``)'' from the right in $S_\ii^\xi(\bl)$, and let $H = h^\bx(r)$. Then $\dim \ker x_\ii|_{V_\ii^{\geq H}} = \dim \ker x_\ii$ and $\dim \ker x_\ii|_{V_\ii^{> H}}=\dim \ker x_\ii-1$. \end{enumerate}
  
Theorem~\ref{thm:irrationalcrystal} includes the hypothesis that $\bx$ is aligned which implies $H > \max_i \{\xi_i\}$, so, by Lemma~\ref{lem:new}, $\dim \ker x_\ii|_{V_\ii^{\geq H}} \geq $ $R_\ii^{\geq H} -A_\ii^{\geq H +\xi_{\Omega} + \xi_{\overline \Omega}}$. By Theorem~\ref{thm:to-partitions} the multi-partition $\bl$ is $\bx$-regular, so by Lemma~\ref{lem:ar-illegal} we see that $A_{\ii}^{\ge H+\xi_\Omega + \xi_{\ol{\Omega}}} = A_{\ii}^{\ge H}$. This implies that 
\begin{equation} \label{eq:bound1}
\dim \ker x_\ii|_{V_\ii^{\geq H}} \ge \varphi \quad  \text{and} \quad  \dim \ker x_\ii|_{V_\ii^{> H}} \ge \varphi - 1,
\end{equation} 
where $\varphi$ is the number of uncanceled ``)''. In particular, we also have
\begin{equation} \label{eq:bound2}
\dim \ker x_{\ii} \ge \varphi.
\end{equation}
By semi-continuity, to prove (ii), it is enough to find one element of $Z$ where all the inequalities in \eqref{eq:bound1} and \eqref{eq:bound2} hold with equality. For each canceling pair $(a_j, r_j)$, let $t_{a_j, r_j}$ be the tangent vector to $p_\bl$ in $Z$ defined by the embedding of $\bc^*$ from Proposition \ref{prop:flatfamily}\eqref{pt}. Moving a small amount in the direction of a generic linear combination $\sum c_j t_{a_j, r_j}$ gives the desired element, where the sum is over all canceling pairs of brackets and $t_{a_j, r_j}$ is the tangent vector from Proposition \ref{prop:flatfamily}. This completes the proof of Theorem \ref{thm:irrationalcrystal}.

\section{Application to the monomial crystal} \label{sec:monomial}

\subsection{Background on the monomial crystal for $\asl_n$}   \label{ss:monom}

Here we describe the monomial crystal of Nakajima \cite[\S3]{Nakajima:2003} in the case of $\asl_n$, as generalized and modified by Kashiwara in \cite[\S4]{Kashiwara:2001}. In fact the definition below is even more general than Kashiwara's, since Kashiwara assumes that our $K$ is $1$. 

We work with the set $\mathcal M$ of Laurent monomials in variables $Y_{\ii, k}$ for $\ii \in \bz/n$, $k \in \bz$. That is, an element of $\mathcal M$ is a product $M= \prod_{\ii \in \bz/n,\ k \in \bz} Y_{\ii,k}^{y_{\ii,k}}$ where each $y_{\ii,k}$ is an integer, and all but finitely many are $0$. For each $\ii \in \bz/n$, fix integers $c_{\ii, \ii+1}$ and $c_{\ii+1, \ii}$ such that the quantity $K= c_{\ii,\ii+1}+ c_{\ii+1,\ii}$ is constant over $\ii \in \bz/n$. We denote this data by ${\bf c}$. Define
\[
A_{{\bf c}; \ii,k}= Y_{\ii,k} Y_{\ii, k+K}  Y_{\ii+1, k+c_{\ii,\ii+1}}^{-1} Y_{\ii-1, k+c_{\ii,\ii-1}}^{-1}.
\]
For a monomial $\displaystyle M= \prod_{\ii \in \bz/n,\ k \in \bz} Y_{\ii,k}^{y_{\ii,k}}$, let
\begin{align*}
\wt(M) &= \sum_{\ii,k} y_{\ii,k} \Lambda_\ii, \qquad
\varepsilon_\ii(M) = -  \min_{k \in \bz} \big(   \sum_{s > k} y_{\ii,s} \big), \qquad
\varphi_\ii(M) = \max_{k \in \bz} \big( \sum_{s \leq k} y_{\ii,s} \big),\\
k_e(M) &= \max \big\{ k : \varepsilon_\ii(M)= - \sum_{s > k} y_{\ii,s} \big\}, \qquad
k_f(M) = \min \big\{ k : \varphi_\ii(M)= \sum_{s \leq k} y_{\ii,s} \big\}.
\end{align*}
Define operators $\tilde e_\ii^{\bf c}, \tilde  f_\ii^{\bf c} \colon \mathcal M \rightarrow \mathcal M$ for each $\ii \in \bz/n$ by
\begin{equation}
\begin{split}
\label{eq:efmdef}  
\tilde e_\ii^{\bf c}(M) &= 
\begin{cases}
0 & \text{ if } \varepsilon_\ii(M)=0   \\
A_{{\bf c}, \ii,k_e(M)-K} M & \text{ if } \varepsilon_\ii(M) > 0
\end{cases},\\
\tilde f_\ii^{\bf c}(M) &= 
\begin{cases}
0 & \text{ if } \varphi_\ii(M)=0 \\
A_{{\bf c}, \ii, k_f(M)}^{-1} M & \text{ if } \varphi_\ii(M) > 0
\end{cases}.
\end{split}
\end{equation}

We will actually need the following equivalent definition of $\tilde e_{\bar \imath}^{\bf c}$ and $\tilde f_\ii^{\bf c}$, which is described in a special case in \cite[\S 3]{monomial-partitions}. For each $\ii \in \bz/n$, let $S^M_\ii(M)$ be the string of brackets which contains a ``$($'' for every factor of $Y_{\ii, k}$ in $M$ and a ``$)$'' for every factor of $Y_{\ii, k}^{-1}$ in $M$, for all $k \in \bz$. These are ordered from left to right in decreasing order of $k$, as shown in Figure \ref{mono-rule}. Cancel brackets according to usual conventions (see \S\ref{ss:comb-cryst}), and set
\begin{equation} \label{ef-def2}
\begin{split}
  & \tilde{e}^{\bf c}_{\bar \imath}(M)  \hspace{-0.07cm} = \hspace{-0.07cm} 
  \begin{cases}
    0 & \mbox{} \hspace{-0.15cm}  \text{if there is no uncanceled ``)'' in $M$}, \\
   A_{{\bf c}; \ii,k-K}M & \mbox{} \hspace{-0.15cm}  \text{if the first  uncanceled ``)'' from the right comes from a factor $Y_{\ii, k}^{-1}$},
  \end{cases} \\
  & \tilde{f}^{\bf c}_\ii(M)  \hspace{-0.07cm} = \hspace{-0.07cm} 
  \begin{cases}
    0 &  \mbox{} \hspace{-0.15cm}  \text{if there is no uncanceled ``('' in $M$},  \\
   A_{{\bf c}; \ii, k}^{-1}M & \mbox{} \hspace{-0.15cm}  \text{if the first uncanceled ``('' from the left comes from a factor $Y_{\ii, k}$}.
  \end{cases}
\end{split}
\end{equation}
It is a straightforward exercise to see that these operators agree with \eqref{eq:efmdef}. 

\begin{figure}
\begin{center}
\setlength{\unitlength}{0.7cm}
\begin{picture}(11,1.5)

\put(0,0){$Y_{\bar 1,15}^{}$}

\put(1,0){$Y_{\bar 2,14}^{}$}
\put(2,0){$Y_{\bar 1,13}^{-2}$}
\put(3,0){$Y_{\bar 0,10}^{}$}
\put(4,0){$Y_{\bar 1,9}^{}$}
\put(5,0){$Y_{\bar 3,9}^{}$}

\put(6,0){$Y_{\bar 1,7}^{}$}
\put(7,0){$Y_{\bar 3,7}^{-1}$}
\put(8,0){$Y_{\bar 1,5}^{-1}$}
\put(9,0){$Y_{\bar 0,4}^{-1}$}
\put(10,0){$Y_{\bar 1,1}^{}$}

\put(0.1,1){$($}
\put(2.1,1){$))$}
\put(4.1,1){$($}

\put(6.1,1){$($}
\put(8.1,1){$)$}
\put(10.1,1){$($}

\end{picture}
\end{center}

\caption{Example: The string of brackets $S^M_{\overline 1}$  \label{mono-rule}}
\end{figure}

\begin{Theorem}[{\cite[Theorem 4.3]{Kashiwara:2001}}]
Assume $K=1$. Fix a monomial $D$ which is highest weight for the crystal structure and let $B^D$ be the subset generated by $D$. Then $B^D$ along with these operators is isomorphic to the crystal $B(\Lambda)$, where $\Lambda= \wt D$. 
\end{Theorem}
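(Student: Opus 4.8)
The plan is to reprove Kashiwara's theorem by comparing the monomial crystal with the multi-partition realization of $B(\Lambda)$ already established in \S\ref{s:asln-results}, rather than invoking the abstract recognition theorems for highest weight crystals. Concretely, I would fix a highest weight monomial $D$ with $\wt D = \Lambda$ and an integral slope datum $\bx$ whose associated combinatorial data matches $\bf c$, so that Corollary~\ref{cor:rat-crystal} provides a copy of $B(\Lambda)$ whose underlying set is a set of multi-partitions $B^\bx$ with crystal operators defined by the same bracket-cancellation rule as in \S\ref{ss:comb-cryst}. The goal is then to build an explicit ``content monomial'' bijection $\Phi \colon B^\bx \to B^D$ intertwining the two sets of operators, from which $B^D \cong B^\bx \cong B(\Lambda)$ is immediate.

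The heart of the construction is the dictionary between boxes and monomial variables. I would send a multi-partition $\bl$ to the monomial recording, for each residue $\ii$, a factor $Y_{\ii,k}$ for every addable $\ii$-node and a factor $Y_{\ii,k}^{-1}$ for every removable $\ii$-node, where the integer $k$ is read off the height $h^\bx$ of the relevant corner of the node (top corner for addable, bottom corner for removable), normalized so that these two corner-indices of a single box differ by exactly $K$. Two things must then be checked. First, the bracket string $S^\bx_\ii(\bl)$ of \S\ref{ss:comb-cryst} and the bracket string $S^M_\ii(\Phi(\bl))$ of \eqref{ef-def2} must literally coincide: since both order their brackets by a decreasing height/index, this reduces to verifying that ordering addable and removable nodes by the height of their centers agrees with ordering them by the corner-indices used in $\Phi$, which is exactly the content of the remark following Lemma~\ref{lem:ar-illegal} for $\bx$-regular $\bl$. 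Second, and most importantly, adding a box at an addable $\ii$-node must correspond to multiplying the monomial by $A_{{\bf c};\ii,k}^{-1}$: filling the node consumes its addable factor $Y_{\ii,k+K}$, the new removable node it creates contributes $Y_{\ii,k}^{-1}$, and the addable $(\ii\pm\bar 1)$-nodes created along the two adjacent edges contribute the factors $Y_{\ii+\bar1,\,\cdot}$ and $Y_{\ii-\bar1,\,\cdot}$ whose indices encode $c_{\ii,\ii+1}$ and $c_{\ii,\ii-1}$. This local computation is where the hypothesis $K=1$ is forced: it is precisely what makes the two same-colour factors of $A_{{\bf c};\ii,k}$ the two corners of a single unit box, so that one combinatorial box equals one application of $\tilde f_\ii^{\bf c}$.

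Granting the dictionary, I would finish as follows. Because the two bracket strings coincide, $\Phi$ intertwines $f_\ii^\bx$ with $\tilde f_\ii^{\bf c}$ and $e_\ii^\bx$ with $\tilde e_\ii^{\bf c}$; since $\Phi(\varnothing, \dots, \varnothing) = D$ and $B^\bx$ is generated from the empty multi-partition while $B^D$ is generated from $D$, the image $\Phi(B^\bx)$ is exactly $B^D$. Injectivity of $\Phi$ uses that $\bx$ is general (distinct boxes have distinct heights, with ties broken by the order $\succ_\bx$ of \S\ref{ss:rational}), so that the multiset of variables in $\Phi(\bl)$ recovers the set of addable and removable nodes and hence $\bl$ itself. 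Combining these, $\Phi$ is a crystal isomorphism $B^\bx \xrightarrow{\sim} B^D$, and since $B^\bx \cong B(\Lambda)$ by Corollary~\ref{cor:rat-crystal} and $\wt \Phi(\bl)$ tracks $\wt \bl$, we get $B^D \cong B(\Lambda)$ with $\wt D = \Lambda$.

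The main obstacle is the second check above: pinning down the corner-index normalization so that both the ordering of the brackets and the ``one box $=$ one $A_{{\bf c};\ii,k}^{-1}$'' identity hold simultaneously, and doing so for every $\bf c$ with $K=1$. The freedom in $\bf c$ corresponds to the choice of slope datum (and to the two tie-breaking orders $\succ_\bx, \succ'_\bx$, matching the two arm sequences of Remark~\ref{rem:Fayers1}); handling a pair $({\bf c}, D)$ not directly realized by a slope datum requires an extra reduction, translating all indices by a fixed monomial (a crystal isomorphism that does not affect the bracketing) to bring $D$ and $\bf c$ into the range produced by Corollary~\ref{cor:rat-crystal}.
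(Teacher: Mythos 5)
First, a point of comparison: the paper does not prove this statement at all --- it is quoted with attribution to Kashiwara \cite[Theorem 4.3]{Kashiwara:2001} and used as background, so there is no internal proof to measure your argument against. Your proposal must therefore stand on its own, and it does not.

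The central gap is that the multi-partition crystals you want to compare against simply do not exist for $K=1$. The realizations $B^\bx$ of Theorem~\ref{thm:irrationalcrystal} and Corollary~\ref{cor:rat-crystal} require an integral slope datum, i.e.\ $\xi_\Omega, \xi_{\oOmega} \in \bz_{>0}$, and the dictionary of \S\ref{sec:monomial} matches $B^\bx$ with the monomial crystal having $c_{\ii,\ii+1}=\xi_\oOmega$ and $c_{\ii+1,\ii}=\xi_\Omega$, hence $K=\xi_\Omega+\xi_\oOmega \ge 2$. This is precisely why \S\ref{sec:monomial} is phrased as producing \emph{new} instances with $K>1$, complementary to Kashiwara's $K=1$ theorem, which the paper explicitly states it does not treat. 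Your claim that the local ``one box equals one $A_{{\bf c};\ii,k}^{-1}$'' identity \emph{forces} $K=1$ is backwards: Lemma~\ref{bv} shows that identity holds exactly because the two same-colour factors $Y_{\ii,k}$ and $Y_{\ii,k+K}$ sit at the two corners of a box of total height $\xi_\Omega+\xi_\oOmega = K \ge 2$; it can never hold with $K=1$ in this framework. Relaxing to slope data with $\xi_\Omega+\xi_\oOmega=1$ but one entry nonpositive is raised as an open question in \S9, and the proofs you would need to import break there (e.g.\ Lemma~\ref{lem:in-L} uses positivity of both $\xi_\Omega$ and $\xi_\oOmega$ to force nilpotency and $s=0$ in the limit). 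A secondary problem: even granting such a $B^\bx$, the alignment condition forces all indices of the dominant monomial $\Psi(\varnothing,\dots,\varnothing)$ into a window of width $K=1$, i.e.\ to coincide, while Kashiwara's theorem allows arbitrary dominant $D$; a single global translation of indices (the only crystal-preserving shift available) cannot repair an arbitrary spread. So the reduction in your final paragraph does not close the argument either.
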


It is not true that all of $\mathcal{M}$ forms an $\asl_n$ crystal under these operations. This was noted by Nakajima in \cite[\S3]{Nakajima:2003} for some specific cases with $K=2$. However, here we will see that some elements of $\mathcal{M}$ do generate $\asl_n$ crystals, even for $K>1$. 

\begin{Remark}
Our results concern the case $K>1$, which is not considered by Kashiwara in \cite[\S4]{Kashiwara:2001}. However, Nakajima's original definition in \cite[\S3]{Nakajima:2003} essentially has $K=2$. Thus the higher $K$ cases were in fact part of the theory from the beginning. 
\end{Remark}
  
\subsection{Results on monomial crystals}
In this section we construct a map of crystals from certain instances of our crystals $B^\bx$ to certain monomial crystals. This is a generalization of \cite[Theorem 5.1]{monomial-partitions}, and the proof proceeds as in that paper. We then use this result to prove that some instances of the monomial crystal which have not previously been understood do in fact realize $B(\Lambda)$. 

Fix an integral, aligned slope datum $\bx$. We consider the crystal $B^\bx$ defined using the partial order (i) from \S\ref{ss:rational}. That is, boxes are ordered by $b \succ_\bx b'$ if and only if
\begin{itemize}
\item $h^{\bx}(b) > h^{\bx}(b')$ or
\item $h^{\bx}(b) = h^{\bx}(b')$ and $i_1>i_2$ or 
\item $h^{\bx}(b) = h^{\bx}(b')$ and $i_1=i_2$ and $k_1 > k_2.$
\end{itemize}
We consider the monomial crystal with parameters $c_{\ii,\ii+1}= \xi_\oOmega$ and $c_{\ii+1,\ii}=\xi_{\Omega}$ for all $\ii \in \bz/n$, so $K= \xi_\oOmega+\xi_{\Omega}$. Define a map $\Psi \colon B^\bx \cup \{ 0 \} \rightarrow \mathcal M \cup \{ 0 \}$ by $\Psi(0)=0$, and, for all $\bl \in B^\bx$,
\[
\Psi(\bl) := \prod_{a\in A(\bl)} Y_{\bar c(a), h^\bx(a)}  \prod_{r \in R(\bl)} Y_{\bar c(r), h^\bx(r)+K}^{-1}.
\]
Here $\bar c$, $A(\bl)$, and $R(\bl)$ are as in \S\ref{partitions-section}.

\begin{Theorem} \label{th:is-crystal-map}
For all $\bl \in B^\bx$, we have $\Psi(\tilde{e}_i^\bx(\bl)) = \tilde{e}_i^{\bf c}(\Psi(\bl))$, $\Psi(\tilde{f}_i^\bx(\bl)) = \tilde{f}_i^{\bf c}(\Psi(\bl))$, and $\wt(\Psi(\bl))= \wt(\bl)$. 
\end{Theorem}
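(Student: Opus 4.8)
The plan is to reduce the whole statement to a single \emph{local} computation---how $\Psi$ changes when one box is added to a multi-partition---combined with the observation, already available through Lemma~\ref{lem:ar-illegal}, that the bracket string governing the combinatorial operators is carried to the bracket string governing the monomial operators. First I would record the dictionary built into the definition of $\Psi$: each addable $\ii$-node $a$ contributes a factor $Y_{\ii, h^\bx(a)}$, i.e.\ a ``$($'' in $S^M_\ii(\Psi(\bl))$ sitting at position $k=h^\bx(a)$, while each removable $\ii$-node $r$ contributes a factor $Y_{\ii, h^\bx(r)+K}^{-1}$, i.e.\ a ``$)$'' at position $k=h^\bx(r)+K$. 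Thus $\tilde e^\bx_i,\tilde f^\bx_i$ read brackets off $S^\bx_\ii(\bl)$ (ordered by $h^\bx$), whereas $\tilde e^{\bf c}_i,\tilde f^{\bf c}_i$ read them off $S^M_\ii(\Psi(\bl))$ (ordered by $k$), so the two definitions have the same shape.

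The key local lemma I would prove is that for any $\bl$ and any $a\in A_\ii(\bl)$ with $\bl\cup a$ again a colored multi-partition,
\[
\Psi(\bl\cup a)=A_{{\bf c};\ii,\,h^\bx(a)}^{-1}\,\Psi(\bl),
\]
and dually $\Psi(\bl\setminus r)=A_{{\bf c};\ii,\,h^\bx(r)}\,\Psi(\bl)$ for $r\in R_\ii(\bl)$. To establish this I would add $a=(k_a;i_a,j_a)$ and track the four local changes in $A(\bl)$ and $R(\bl)$. The node $a$ leaves $A(\bl)$ and enters $R(\bl)$, producing $Y_{\ii,h^\bx(a)}^{-1}Y_{\ii,h^\bx(a)+K}^{-1}$ (using $h^\bx$ of the diagonal neighbour of $a$ equals $h^\bx(a)+K$); on the arm side, either a new addable $(\ii+\bar 1)$-node appears at $(k_a;i_a,j_a+1)$ or the removable $(\ii+\bar 1)$-node at $(k_a;i_a-1,j_a)$ disappears, and in either case $\Psi$ is multiplied by $Y_{\ii+\bar 1,\,h^\bx(a)+\xi_\oOmega}$; symmetrically the leg side multiplies by $Y_{\ii-\bar 1,\,h^\bx(a)+\xi_\Omega}$. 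Multiplying the four factors and using $c_{\ii,\ii+1}=\xi_\oOmega$, $c_{\ii,\ii-1}=\xi_\Omega$, and $K=\xi_\Omega+\xi_\oOmega$ gives exactly $A_{{\bf c};\ii,h^\bx(a)}^{-1}$. This step is a finite case check and needs no regularity hypothesis.

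Next I would compare the two bracket strings for $\bx$-regular $\bl$. Among addable (resp.\ removable) nodes the two orderings agree, since $k\mapsto k$ (resp.\ $k\mapsto k+K$) is order-preserving; the only possible discrepancy is the relative position of an addable node $a$ and a removable node $r$, which would differ precisely when $h^\bx(r)<h^\bx(a)<h^\bx(r)+K$. By Lemma~\ref{lem:ar-illegal} this never occurs for $\bx$-regular $\bl$, so $S^\bx_\ii(\bl)$ and $S^M_\ii(\Psi(\bl))$ carry the same sequence of brackets and undergo the same cancellation. Hence the box $a$ selected by the first uncanceled ``$($'' in $S^\bx_\ii$ is exactly the box whose factor $Y_{\ii,h^\bx(a)}$ gives the first uncanceled ``$($'' in $S^M_\ii$, and likewise for ``$)$''; the cases producing $0$ match because the two strings have uncanceled brackets simultaneously (with $\Psi(0)=0$ by definition). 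Combining with the local lemma gives $\Psi(\tilde f^\bx_i\bl)=A_{{\bf c};\ii,h^\bx(a)}^{-1}\Psi(\bl)=\tilde f^{\bf c}_i\Psi(\bl)$ and, applying the lemma in reverse, $\Psi(\tilde e^\bx_i\bl)=\tilde e^{\bf c}_i\Psi(\bl)$. Weight preservation then follows by induction along the $\tilde f^\bx_i$-arrows from the empty multi-partition: $\Psi(\varnothing)=\prod_k Y_{p(k),\,h^\bx((k;1,1))}$ has weight $\sum_k\Lambda_{p(k)}=\Lambda=\wt(\varnothing)$, and each application of $\tilde f_i$ multiplies $\Psi$ by $A_{{\bf c};\ii,\cdot}^{-1}$, lowering the weight by $2\Lambda_\ii-\Lambda_{\ii+\bar 1}-\Lambda_{\ii-\bar 1}$, exactly the weight change in $B^\bx$ upon adding an $\ii$-colored box.

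The main obstacle I anticipate is the bracket-matching step, specifically the tie-breaking: for integral $\bx$ several boxes, and several monomial factors, can share a common height or $k$-value, so one must check that the total order $\succ_\bx$ from \S\ref{ss:rational} used to build $S^\bx_\ii(\bl)$ and the ordering by $k$ used to build $S^M_\ii(\Psi(\bl))$ refine the coarse $h^\bx$-order compatibly, including the borderline case $h^\bx(a)=h^\bx(r)+K$. The cleanest route is to observe, as in Corollary~\ref{cor:rat-crystal}, that the operators for the order $\succ_\bx$ arise as a limit of the general (irrational) case, where Lemma~\ref{lem:ar-illegal} yields strict inequalities and no ties occur, and then to pass to the limit. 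Everything else is bookkeeping once the dictionary and the local lemma are in place, matching the argument of \cite[Theorem~5.1]{monomial-partitions}.
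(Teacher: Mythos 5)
Your proposal follows the paper's own proof essentially step for step: your local lemma is exactly the paper's Lemma~\ref{bv} (proved by the same four-case bookkeeping of how $A(\bl)$ and $R(\bl)$ change), and your bracket-matching step is the paper's Lemma~\ref{when-illegal} combined with Lemma~\ref{lem:ar-illegal}. The one place the paper is cleaner is the borderline case $h^\bx(a)=h^\bx(r)+K$ that you flag as the main obstacle: rather than a limit argument (which is awkward here, since the monomial variables $Y_{\ii,k}$ and the parameters $\bf c$ force everything to stay integral), the paper simply pre-cancels those adjacent ``$($''--``$)$'' pairs in $S^\bx_\ii(\bl)$ to form an auxiliary string $T$, notes that the corresponding factors $Y_{\ii,k}Y_{\ii,k}^{-1}$ cancel inside $\Psi(\bl)$ as well, and then matches $T$ with $S^M_\ii(\Psi(\bl))$ bracket for bracket.
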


The proof of Theorem \ref{th:is-crystal-map} will take most of the rest of this section. 

\begin{Lemma} \label{bv}
Let $\bl$ and $\bmu$ be multi-partitions such that $\bmu = \bl \sqcup b$ for some box $b$. Then $\Psi(\bmu) = A_{{\bf c}; \ii, h^\bx(b)}^{-1} \Psi(\bl),$ where $\ii= \bar c(b)$.
\end{Lemma}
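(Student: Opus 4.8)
The plan is to compute the ratio $\Psi(\bmu)/\Psi(\bl)$ directly, by tracking exactly how the sets $A(\bl)$ and $R(\bl)$ of addable and removable nodes change when the single box $b$ is added. Write $b=(k_b;i,j)$, set $\ii=\bar c(b)$ and $k=h^\bx(b)=\xi_{k_b}+\xi_\Omega i+\xi_\oOmega j$. Since $\bmu$ and $\bl$ agree away from the component $\lambda(k_b)$, and since the addable/removable status of a node $(p,q)$ depends only on the presence of $(p,q)$ and of its four neighbors $(p\pm1,q),(p,q\pm1)$, the only nodes whose status can change are $b$ itself and the four neighbors $(i\pm1,j),(i,j\pm1)$ in the component $\lambda(k_b)$. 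So the whole computation is local, and in particular the diagonal neighbors $(i\pm1,j\pm1)$ never change status.

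First I would record the colors and heights of the relevant neighbors, together with the two identities $K-\xi_\Omega=\xi_\oOmega$ and $K-\xi_\oOmega=\xi_\Omega$, which are exactly what make the $K$-shift in the removable-node factors of $\Psi$ line up with the unshifted addable-node heights. Explicitly: $(i,j+1)$ has color $\ii+1$ and height $k+\xi_\oOmega$; $(i+1,j)$ has color $\ii-1$ and height $k+\xi_\Omega$; $(i-1,j)$ has color $\ii+1$ and height $k-\xi_\Omega$; and $(i,j-1)$ has color $\ii-1$ and height $k-\xi_\oOmega$. The target factor is $A_{{\bf c};\ii,k}^{-1}=Y_{\ii,k}^{-1}Y_{\ii,k+K}^{-1}Y_{\ii+1,k+\xi_\oOmega}Y_{\ii-1,k+\xi_\Omega}$.

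The box $b$ always switches from an addable node of $\bl$ (contributing $Y_{\ii,k}$) to a removable node of $\bmu$ (contributing $Y_{\ii,k+K}^{-1}$), which accounts for the two factors $Y_{\ii,k}^{-1}Y_{\ii,k+K}^{-1}$. For the factor $Y_{\ii+1,k+\xi_\oOmega}$ I would split into two cases according to whether $(i-1,j+1)\in\bl$: if it is, then $(i,j+1)$ becomes a newly addable node of $\bmu$ and contributes $Y_{\ii+1,k+\xi_\oOmega}$; if it is not, then $(i-1,j)$ was a removable node of $\bl$ contributing $Y_{\ii+1,k+\xi_\oOmega}^{-1}$ (using $K-\xi_\Omega=\xi_\oOmega$), which ceases to be removable once $b$ is added, so this inverse factor disappears from $\Psi$. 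Either way the net effect is a single factor $Y_{\ii+1,k+\xi_\oOmega}$, and a symmetric case split on whether $(i+1,j-1)\in\bl$ produces $Y_{\ii-1,k+\xi_\Omega}$. Confirming that these are the only changes then yields $\Psi(\bmu)/\Psi(\bl)=A_{{\bf c};\ii,k}^{-1}$, which is the claim.

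The routine-but-delicate part — which I would treat with care rather than dwelling on the algebra — is the bookkeeping at the boundary of the diagram: when $i=1$ or $j=1$ one of the neighbors is a virtual cell (handled by the conventions $\lambda_0=\infty$, so that such a cell is always ``present,'' and $x^{b,b'}=0$ for negative coordinates), and one must check that in each of the two case splits exactly one alternative still fires. Once the local picture and these edge conventions are pinned down, the identity holds uniformly across all four combinations of the two case splits, and the proof is complete.
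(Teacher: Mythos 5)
Your proof is correct and follows essentially the same route as the paper: both arguments track exactly how $A(\bl)$ and $R(\bl)$ change locally when the box $b$ is added (the box $b$ switching from addable to removable, plus one net change of color $\ii+\bar 1$ at height $h^\bx(b)+\xi_{\oOmega}$ and one of color $\ii-\bar 1$ at height $h^\bx(b)+\xi_{\Omega}$) and then read off the ratio from the definition of $\Psi$. The paper simply asserts these four local changes as clear, whereas you justify them via the neighbor-locality observation and the two case splits; this is a fleshed-out version of the same computation.
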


\begin{proof} 
It is clear that $( A(\bl), R(\bl))$ differs from  $( A(\bmu), R(\bmu))$ in exactly the following four ways:
\begin{itemize}[leftmargin=10pt]
\item $b \in A_{\ii}(\bl) \setminus A_{\ii}(\bmu)$.

\item $b \in R_{\ii}(\bmu) \setminus R_\ii(\bl)$.

\item Either (i): $A_{\ii+ \bar 1}(\bmu) \setminus A_{\ii + \bar 1}(\bl)= b'$ and $R_{\ii + \bar 1}(\bl) =R_{\ii + \bar 1}(\bmu)$ for some box $b'$ with $h^\bx(b') = h^\bx(b)+\xi_\oOmega$, or (ii): $R_{\ii + \bar 1}(\bl) \setminus R_{\ii + \bar 1}(\bmu) = b'$ and $ A_{\ii + \bar 1}(\bl) =A_{\ii + \bar 1}(\bmu)$ for some box $b'$ with $h^\bx(b')= h^\bx(b)-\xi_\Omega$.

\item Either (i): $A_{\ii - \bar 1}(\bmu) \setminus A_{\ii - \bar 1}(\bl)= b''$ and $ R_{\ii - \bar 1}(\bl) =R_{\ii - \bar 1}(\bmu)$ for some box $b''$ with $h^\bx(b'') = h^\bx(b)+\xi_\Omega$, or (ii): $R_{\ii - \bar 1}(\bl) \setminus R_{\ii - \bar 1}(\bmu) = b''$ and $ A_{\ii - \bar 1}(\bl) =A_{\ii - \bar 1}(\bmu)$ for some box $b''$ with $h^\bx(b'')= h^\bx(b)-\xi_\oOmega$.
\end{itemize}
By the definition of $\Psi$, this implies that 
\begin{equation*}
\Psi(\bmu)= Y_{\ii, h^\bx(b)}^{-1}  Y_{\bar \imath, h^\bx(b)+\xi_\Omega+ \xi_{\oOmega}}^{-1}  Y_{\bar \imath+\bar 1, h^\bx(b) + \xi_\oOmega}^{}  Y_{\bar \imath-\bar 1, h^\bx(b) + \xi_{\Omega}}^{}  \Psi(\bl) = A_{{\bf c}; \ii, h^\bx(b)}^{-1} \Psi(\bl). \qedhere
\end{equation*}
\end{proof}

\begin{Lemma} \label{when-illegal} 
Fix $\bl \in B^\bx$. For any $a \in A_\ii(\bl)$ and $r \in R_\ii(\bl)$, we have $a \prec_\bx r$ if and only if $a \prec_\bx b$, where if $r = (k;i,j)$ we set $b=(k;i+1,j+1)$.  
\end{Lemma}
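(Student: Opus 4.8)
The plan is to hinge on the height identity for the diagonal shift and then to import Lemma~\ref{lem:ar-illegal} through the approximating family of integral slope data that realizes $\succ_\bx$. Writing $r=(k;i,j)$, so that $b=(k;i+1,j+1)$, Definition~\ref{defn:height} gives
\[
h^\bx(b)=\xi_k+\xi_\Omega(i+1)+\xi_{\overline\Omega}(j+1)=h^\bx(r)+\xi_\Omega+\xi_{\overline\Omega}.
\]
Since $\xi_\Omega,\xi_{\overline\Omega}>0$ this exceeds $h^\bx(r)$, so $r\prec_\bx b$ regardless of the tie-breaking in the definition of $\succ_\bx$. The forward implication then needs nothing about $\bl$: if $a\prec_\bx r$, then $a\prec_\bx r\prec_\bx b$, and transitivity of the total order $\succ_\bx$ yields $a\prec_\bx b$.

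For the converse I would argue by contradiction. Suppose $a\prec_\bx b$ but $a\not\prec_\bx r$; since $a\neq r$ (one is addable, the other removable) and $\succ_\bx$ is total, this forces $r\prec_\bx a\prec_\bx b$ with $r,a,b$ pairwise distinct. I would then invoke the family $(\bx^{(N)})$ of general integral slope data from \S\ref{ss:rational} realizing $\succ_\bx$, for which $h^{\bx^{(N)}}(c)>h^{\bx^{(N)}}(c')$ holds for all large $N$ whenever $c\succ_\bx c'$. Feeding $r\prec_\bx a\prec_\bx b$ into this and using the same diagonal identity for $\bx^{(N)}$ gives, for large $N$,
\[
h^{\bx^{(N)}}(r)<h^{\bx^{(N)}}(a)<h^{\bx^{(N)}}(b)=h^{\bx^{(N)}}(r)+\xi^{(N)}_\Omega+\xi^{(N)}_{\overline\Omega}.
\]
As $a\in A_\ii(\bl)$ and $r\in R_\ii(\bl)$ are both $\ii$-nodes, Lemma~\ref{lem:ar-illegal} applied to $\bx^{(N)}$ then produces a $\bx^{(N)}$-illegal triple in $\bl$.

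To close the contradiction I would use that $\bl\in B^\bx$ is $\bx^{(N)}$-regular for all large $N$. By the limiting construction of Corollary~\ref{cor:rat-crystal} (and the stabilization underlying it), the fixed finite sequence of operators $\tilde f_\ii$ producing $\bl$ from $(\varnothing,\dots,\varnothing)$ involves only finitely many boxes, on which the ranking by $h^{\bx^{(N)}}$ agrees with $\succ_\bx$ for large $N$; hence the same sequence realizes $\bl$ inside $\im M_{\bx^{(N)}}$, which by the first assertion of Theorem~\ref{thm:to-partitions} consists of $\bx^{(N)}$-regular multi-partitions. This contradicts the illegal triple found above and finishes the converse. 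I expect this transfer of regularity to be the main obstacle: in the integral regime Definition~\ref{def:multiillegal} alone does \emph{not} cut out $B^\bx$ --- once two boxes share a height there exist $\bx$-regular multi-partitions lying outside $B^\bx$ --- so the argument cannot be run at the level of $\bx$ itself and must be pushed to the general refinements $\bx^{(N)}$, with care that a single large $N$ simultaneously controls all finitely many boxes and addable/removable nodes of $\bl$.
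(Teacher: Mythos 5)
Your proposal is correct and follows essentially the same route as the paper, whose entire proof reads: a pair violating the stated equivalence yields an illegal triple via Lemma~\ref{lem:ar-illegal}, hence $\bl \notin B^\bx$. Your additional care --- passing to the general refinements $\bx^{(N)}$ so that the strict height inequalities required by Lemma~\ref{lem:ar-illegal} actually hold when $\succ_\bx$ is decided by a tie-break, and transferring $\bx^{(N)}$-regularity to $\bl$ through the finite generating sequence --- is exactly the detail the paper's one-sentence proof leaves implicit, and it is a legitimate (indeed necessary) filling-in rather than a different argument.
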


\begin{proof}
Any pair violating this will lead to an illegal triple (Lemma~\ref{lem:ar-illegal}), and hence is not in $B^\bx$. 
\end{proof}

\begin{proof}[Proof of Theorem \ref{th:is-crystal-map}]
Fix $\bl \in B^\bx$ and $\ii \in \bz/n$. Let $S^M_\ii(\Psi(\bl))$ denote the string of brackets used in \S\ref{ss:monom}. Let $S_\ii^\bx(\bl)$ denote the string of brackets used in \S\ref{ss:comb-cryst}, and define the height of a bracket in $S^\bx_\ii(\bl)$ to be $h^\bx(b)$ for the corresponding box.

It follows immediately from Lemma \ref{when-illegal} that, for each $k$, all ``$($'' in $S^\bx_\ii(\bl)$ of height $k+\xi_\Omega+ \xi_{\oOmega}$ are immediately to the left of all ``$)$'' of height $k$.  Let $T$ be the string of brackets obtained from $S^\bx_{\ii}(\bl)$ by, for each $k$, canceling as many ``$($'' of height $k+\xi_\Omega+ \xi_\oOmega$ with ``$)$'' of height $k$ as possible. One can use $T$ instead of $S^\bx_\ii(\bl)$ to calculate $\tilde e_\ii^\bx(\bl)$ and $\tilde f_\ii^\bx(\bl)$ without changing the result.

By the definition of $\Psi$, it is clear that 
\begin{enumerate}
\item The ``$($'' in $T$ of height $k+\xi_\Omega+\xi_\oOmega$ correspond exactly to the factors of $Y_{\ii, k+K}$ in $\Psi(\bl)$.
\item The ``$)$'' in $T$ of height $k$ correspond exactly to the factors of $Y_{\ii, k+K}^{-1}$ in $\Psi(\bl)$.
\end{enumerate}
Thus the brackets in $T$ correspond exactly to the brackets in $S^M_\ii(\Psi(\bl))$. Furthermore, Lemma \ref{when-illegal} implies that these brackets occur in the same order. The theorem then follows from Lemma \ref{bv} and the definitions of the crystal operators.
\end{proof}

\begin{Definition}
Fix $K$. We call a dominant monomial $M$ {\bf aligned} if
\begin{equation*}
\max \{ |k- k'| : k,k' \in \bz,  \text{ and for some } \ii, \ii' \in \bz/n \text{ with } y_{\ii, k}, y_{\ii', k'}  \neq 0\} < K. \qedhere
\end{equation*}
\end{Definition}

Theorem~\ref{th:is-crystal-map} has the following consequence, which shows that certain new instances of the monomial crystal do in fact still realize the crystal $B(\Lambda)$. This is related to \cite[Problem 2]{Kashiwara:2001}.

\begin{Corollary} 
Fix $\xi_{\Omega},\xi_{\oOmega} \in \bz_{>0}$. For all $\ii \in \bz/n$, let $c_{\ii,\ii+1}= \xi_\oOmega$ and $c_{\ii+1,\ii}=\xi_{\Omega}$. Let $M$ be a $(\xi_{\Omega}+\xi_{\oOmega})$-aligned dominant monomial. Then the component of the monomial crystal generated by $M$ under the operators $\tilde e_\ii$ and $\tilde f_\ii$ is a copy of $B(\Lambda)$ for $\Lambda= \wt(M).$ 
\end{Corollary}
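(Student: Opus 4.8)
The goal is to show that any $(\xi_\Omega + \xi_{\oOmega})$-aligned dominant monomial $M$ generates a copy of $B(\Lambda)$ under the operators $\tilde e_\ii, \tilde f_\ii$. The strategy is to exhibit $M$ as the image under $\Psi$ of the empty multi-partition for a suitably chosen slope datum, and then to invoke Theorem~\ref{th:is-crystal-map} together with Corollary~\ref{cor:rat-crystal}. First I would fix the slope datum parameters $\xi_\Omega$ and $\xi_{\oOmega}$ as given, and observe that the alignment hypothesis on $M$ is precisely engineered to let me back out the remaining parameters $\xi_1, \dots, \xi_\ell$ of an integral, aligned slope datum $\bx$ whose associated $\Psi$ sends the empty multi-partition $(\varnothing, \dots, \varnothing)$ to $M$.

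**Constructing the slope datum.**
The key computation is this: for the empty multi-partition $(\varnothing, \dots, \varnothing)$ of type $\bfw$, the addable nodes are exactly the ``corner'' boxes of each component $\lambda(k)$, and there are no removable nodes. Hence by the definition of $\Psi$,
\[
\Psi(\varnothing, \dots, \varnothing) = \prod_{k=1}^{\ell} Y_{p(k),\, \xi_k},
\]
a product of variables with positive exponent, i.e.\ a dominant monomial with $\wt = \Lambda = \sum_k \Lambda_{p(k)}$. Given the dominant monomial $M = \prod_{\ii, k} Y_{\ii, k}^{y_{\ii, k}}$ with all $y_{\ii, k} \ge 0$, I read off $\ell = \sum_{\ii, k} y_{\ii, k}$ factors, set the coloring function $p$ and the heights $\xi_k$ so that each factor $Y_{\ii, k}$ contributes one component colored $\ii$ with $\xi_k = k$. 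The aligned hypothesis on $M$, namely $\max\{|k - k'|\} < K = \xi_\Omega + \xi_{\oOmega}$ over indices appearing in $M$, translates directly into the condition $|\xi_i - \xi_j| < \xi_\Omega + \xi_{\oOmega}$ for all $i, j$, which is exactly the definition of $\bx$ being \emph{aligned} (Definition~\ref{defn:height}). I can perturb the $\xi_k$ infinitesimally, or use the order $\succ_\bx$ from \S\ref{ss:rational}, to ensure generality without disturbing alignment.

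**Assembling the pieces.**
With $\bx$ chosen as above, Theorem~\ref{th:is-crystal-map} gives that $\Psi$ intertwines the combinatorial crystal operators $\tilde e_\ii^\bx, \tilde f_\ii^\bx$ on $B^\bx$ with the monomial operators $\tilde e_\ii^{\bf c}, \tilde f_\ii^{\bf c}$, and preserves weight, where the parameters are $c_{\ii, \ii+1} = \xi_\oOmega$, $c_{\ii+1, \ii} = \xi_\Omega$, matching the corollary's hypotheses. By Corollary~\ref{cor:rat-crystal}, $B^\bx$ (generated from the empty multi-partition by the $\tilde f_\ii^\bx$) is a copy of $B(\Lambda)$. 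Since $\Psi$ is a crystal morphism sending the generator $(\varnothing, \dots, \varnothing)$ to $M$, the image $\Psi(B^\bx)$ is exactly the component of the monomial crystal generated by $M$, and it inherits the crystal isomorphism $B^\bx \cong B(\Lambda)$ provided $\Psi$ restricts to an injection on $B^\bx$.

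**The main obstacle.**
The step requiring the most care is verifying that $\Psi|_{B^\bx}$ is injective, so that the abstract crystal morphism is genuinely an isomorphism onto the generated component rather than merely a surjection. Injectivity should follow from the fact that a multi-partition $\bl \in B^\bx$ can be recovered from its full collection of addable and removable nodes together with their heights and colors — essentially one reconstructs the boundary rim of each component from the exponents of $\Psi(\bl)$ — but one must confirm that distinct $\bx$-regular multi-partitions never produce the same monomial, which uses the aligned condition to rule out cancellation between the $A(\bl)$-factors and $R(\bl)$-factors of $\Psi$. Once injectivity is in hand, the isomorphism $\Psi(B^\bx) \cong B(\Lambda)$ is immediate, completing the proof.
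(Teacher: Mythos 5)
Your proposal follows the paper's proof exactly: choose an aligned slope datum $\bx$ with the prescribed $\xi_\Omega,\xi_{\oOmega}$ so that $\Psi$ sends the empty multi-partition to $M$, then combine Corollary~\ref{cor:rat-crystal} with Theorem~\ref{th:is-crystal-map}. The only quibbles are a harmless indexing offset (the unique addable node of $\varnothing$ is $(k;1,1)$, of height $\xi_k+\xi_\Omega+\xi_{\oOmega}$, so $\Psi(\varnothing,\dots,\varnothing)=\prod_k Y_{p(k),\,\xi_k+K}$ rather than $\prod_k Y_{p(k),\xi_k}$; one adjusts the $\xi_k$ and, if needed, applies a uniform shift of the second index, under which the monomial crystal is invariant, to keep them positive) and that the injectivity of $\Psi$ you flag as the main obstacle is automatic for a strict, weight-preserving morphism out of the highest weight crystal $B^\bx\cong B(\Lambda)$, which is presumably why the paper does not address it.
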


\begin{proof}
One can easily find an aligned slope datum $\bx$ with $\xi_\Omega, \xi_\oOmega$ as specified, and such that $\Psi$ sends the empty multi-partition to $M$, and by Corollary \ref{cor:rat-crystal}, $B^\bx$ is a copy of the crystal $B(\Lambda)$. Thus the corollary follows immediately from Theorem \ref{th:is-crystal-map}.
\end{proof}

\section{Construction in terms of punctual quot schemes} \label{sec-quot}

Another reason the case of $\asl_n$ is special is that in this case $\mL(\bfv, W)$ can be realized using punctual quot schemes. We now briefly explain how our results translate into that language. Our references for this section are \cite{Nakajima:1999} and \cite{instantonlectures}.

\subsection{The punctual quot scheme}
Fix a finite dimensional $\bc$-vector space $W$. Choose coordinates on $\ba^2$ so that we can identify its coordinate ring $\cO_{\ba^2} = \bc[x,y]$. The punctual quot scheme $\Quot_0(W,m)$ is the moduli space of $\bc[x,y]$-submodules $K \subset \bc[x,y] \otimes W$ where the quotient $(\bc[x,y] \otimes W )/ K$ is $m$-dimensional and supported at the origin, i.e., annihilated by some power of the maximal ideal $(x,y)$. 

Fix a basis $\{ w_1, \ldots, w_\ell \}$ of $W$, a primitive $n^{\rm th}$ root of unity $\zeta$, and numbers $\alpha_j \in \bz/n$ for each $j \in \bz/n$. Consider the $\bz/n$ action on $ \bc[x,y] \otimes W$ defined by
\[
\overline k \cdot x = \zeta^{-k} x, \quad \overline k \cdot y = \zeta^{k} y, \quad \overline k \cdot w_j = \zeta^{\alpha_{j} k} w_j.
\]
From the results in \cite[\S 2]{Nakajima:1999} and \cite[\S 3]{instantonlectures}, we deduce that
\begin{equation} \label{eq:N1}
\Quot_0(W,m)^{\bz/n} \cong \coprod_{|\bfv|=m} \mL(\bfv, W).
\end{equation}
For each $K \in \Quot_0(W,m)^{\bz/n}$, the element $\overline 1 \in \bz/n$ defines an endomorphism of the vector space $(\bc[x,y] \otimes W)/K$, and the eigenvalues are $\zeta^k$ for $0 \leq k \leq n-1$. Fix $\bfv=(v_{\overline 0}, v_{\overline 1}, \ldots, v_{\overline n - \bar{1}})$ with $|\bfv|=m$. Define  
\[
\Quot_0(W,m)^{\bz/n}(\bfv)
\]
to be the subvariety of $\Quot_0(W,m)^{\bz/n}$ consisting of those $K$ where, for each $\overline k \in \bz/n$, the dimension of the $\zeta^k$-eigenspace of $(\bc[x,y] \otimes W)/K$ is $v_{\overline k}$. Then \eqref{eq:N1} can be refined to
\begin{equation} \label{eq:N2}
\Quot_0(W,m)^{\bz/n}(\bfv) \cong \mL(\bfv, W).
\end{equation}

\begin{Remark}
One can also realize $\mM(\bfv, W)$ in this picture.  Let $\MM(W,m)$ denote the framed moduli space of rank $\ell = \dim W$ torsion-free sheaves on $\bp^2$ whose second Chern class is $m$. For a certain $\bz/n$ action defined very similarly to the one above, one finds
\begin{equation*}
\MM(W,m)^{\bz/n} \cong \coprod_{|\bfv| = m} \mM(\bfv, W).
\end{equation*}
We can deduce from \cite[\S 2]{Nakajima:1999} and \cite[\S 3]{instantonlectures} that there is a natural $\bz/n$-equivariant embedding of $\Quot_0(W,m)$ into $\MM(W,m)$, which restricts to the usual embedding $\mL(\bfv, W) \hookrightarrow \mM(\bfv, W)$.
\end{Remark}

\subsection{Torus actions}
As in \S\ref{ss:torus-combinatorics}, let $T = \bc^{\ell+2}$, with chosen coordinates $(t_\Omega, t_\oOmega, t_1, \ldots, t_\ell)$. $T$ acts on $\bc[x,y] \otimes W$ where, for $t= (t_\Omega, t_\oOmega, t_1, \ldots, t_\ell)$,
\[
t \cdot x = t_\Omega^{-1} x, \;\; t \cdot y =t_\oOmega^{-1} y, \;\; t \cdot w_j = t_j^{-1} w_j.
\]
This induces an action of $T$ on $\Quot_0(W,m)$, and the fixed points are monomial submodules. They are parameterized by multi-partitions, where the fixed point corresponding to the multi-partition $\bl$ is  
\[
K_\bl := \text{span}_\bc \{ x^iy^j w_k \mid (i, j) \not \in \lambda^{(k)} \}.
\]

One can choose the isomorphism in \eqref{eq:N2} so that it intertwines this action with the action of $T$ on $\mL(\bfv, W)$ from \S\ref{ss:torus-combinatorics} and identifies fixed points corresponding to the same multi-partition. 

Fix an integral slope datum $\bx$ and consider $T_\bx \subset T$. The action of $T_\bx$ induces a Bia\l ynicki-Birula stratification of $\Quot_0(W,m)$ as follows: For any $K \in \Quot_0(W,m)$, $\lim_{t \to \infty} t \cdot_\bx K$ is the submodule of $\bc[x,y] \otimes W$ generated by the lowest degree (ordered by $\bx$) terms of elements of $K$. One can think of this as a ``reverse initial submodule'', and the strata are ``reverse Gr\"obner strata'', which we will denote by ${\rm RG}_\bl$. Note that this limit only makes sense since all $K \in \Quot_0(W,m)$ are supported at the origin. 

Each irreducible component $Z$ of $\Quot_0(W,m)^{\bz/n}(\bfv)$ decomposes as
\begin{equation*}
Z= \coprod_\bl (Z \cap {\rm RG}_\bl),
\end{equation*}
and exactly one of the sets $Z \cap {\rm RG}_\bl$ is dense in $Z$. This gives a map from the set of irreducible components of $\Quot_0(W,m)^{\bz/n}(\bfv)$ to multi-partitions.  Translated into this language, Theorem \ref{thm:to-partitions} says that, provided $\bx$ is aligned and sufficiently general, this map is injective, and its image consists of exactly the $\bx$-regular partitions.

\section{Questions}

We feel that the construction in \S\ref{sec:framework} should have more applications. We finish by formulating some questions related to this construction. 

\begin{Question}
Do interesting combinatorics arise when the construction from \S\ref{sec:framework} is carried out in other cases?
\end{Question}

\noindent 
In types other than $\mathfrak{sl}_n$ and $\asl_n$ the fixed-point components defined in \S\ref{sec:framework} are more complicated than just points, but perhaps it is still possible to index them combinatorially. Even the case of $\mathfrak{sl}_n$, where the fixed point components are just points, may lead to potentially new combinatorial realizations of $B(\lambda)$; one has freedom to choose the weights for the torus $T_W$, and this choice should lead to different realizations.

\begin{Question}
In type $\asl_n$, can one describe the situation combinatorially for more general $\bx$? 
\end{Question}

\noindent For instance, it would be natural to consider the case when $\bx$ is general but not aligned. By a combinatorial description we would mean a combinatorial characterization of $\im M_\bx$ and of the induced operators $e_i, f_i$ on $\im M_\bx$. By Theorem~\ref{thm:to-partitions}, $\im M_\bx$ consists only of $\bx$-regular multi-partitions, but one can easily find examples where it is a proper subset of these. Also, the construction in \S\ref{sec:framework} only requires $\xi_\Omega+ \xi_\oOmega >0$, not both individually to be positive. It may be interesting to understand our construction for this more general notion of slope datum. 

\begin{Question} \label{Qls}
Is there a natural crystal structure on a set of fixed point components larger than $\im M_\iota$? Or more geometrically, is there is natural crystal structure on the attracting sets of these components in the various $\mM(\bfv, W)$?
\end{Question}

One might naively hope for a natural crystal structure on all of $\coprod_\bfv \Irr \mathfrak{F}(\bfv, W)$, giving the crystal for a larger $\g$-representation, and which agrees with our the crystal structure on $\im M_\iota$. But, except for certain very specific choices of $\iota$, even in type $\asl_n$ the naive guess for this structure does not work (see Remark \ref{rem66}). However, one can sometimes find a crystal larger than $\im M_\iota$. Recall that, in the proof of Theorem \ref{thm:irrationalcrystal}, to show that a fixed point component was in $\im M_\iota$, we needed to show two things: 
\begin{enumerate}
\item that its attracting set has dimension $\dim(\mM(\bfv, W))/2$, and 
\item that its attracting set is contained in $\mL(\bfv,W)$.
\end{enumerate}
Define $R_\iota$ to be the set of fixed-point components that satisfy (i) but not necessarily (ii). At least in some cases, there is a natural crystal structure on $R_\iota$, as we now briefly describe. 

Choose a decomposition $W= W_1 \oplus W_2$. Consider $\iota$ defined by $\iota(z) = D$ where $D \in T_W$ is the diagonal matrix which acts on $W_1$ as the identity, and acts on $W_2$ as multiplication by $z^N$ for some large $N$ (this is roughly the action used by Nakajima in \cite{Nakajima:2001}). Deform $\iota$ by allowing non-trivial weights of $z$ embedding into $T_\oOmega$ and $T_s$, and changing the weights in $D$, but such that all changes to weights are much smaller than $N$. Then (at least in finite type; in other types one should be more careful with limits) this action preserves the tensor-product variety $\mE$ from \cite{Nakajima:2001}, so the map $M_\iota$ can be extended to a map $\widetilde M_\iota$ from $\Irr \mE$ to fixed-point components, and $R_\iota$ is exactly $\im \widetilde M_\iota$. Nakajima defines a crystal structure on $\Irr \mE$, so this can be pushed to a crystal structure on $R_\iota$. The result is the crystal for a tensor product of two highest weight representations. 

One interpretation of Question \ref{Qls} is to ask if Nakajima's tensor product variety can be generalized in the following sense: Choose a generic family $\iota^{(N)}$ as in Section~\ref{sec:framework}. For each $\bfv$, and large enough $N$, consider the subset $\mE(\bfv, W)$ of $\mM(\bfv, W)$ such that $\lim_{z \rightarrow \infty} \iota^{(N)}(z)$ exists. We conjecture that this is a subvariety of  $\mM(\bfv, W)$ of pure dimension $\dim(\mM(\bfv, W))/2$. Is there a natural crystal structure on $\Irr \mE(\bfv, W)$? If yes, is this the crystal of some $\g$ representation? We have not seriously considered these questions beyond the cases covered by Nakajima's previous work, but they seem like a natural course for further study.

\bibliographystyle{plain}

\end{document}